\newcommand{\red}[1]{\textcolor{red}{#1}}
\newcommand{\sgn}{{\rm sgn}}
\newcommand{\abs}[1]{\left\lvert #1 \right\rvert}
\newtheorem{theorem}{Theorem}[section]
\newtheorem{lemma}[theorem]{Lemma}
\theoremstyle{definition}
\theoremstyle{remark}
\newtheorem{remark}[theorem]{Remark}
\theoremstyle{example}
\newtheorem{example}[theorem]{Example}
\theoremstyle{problem}
\theoremstyle{fact}
\newtheorem{corollary}[theorem]{Corollary}
\numberwithin{equation}{section}
\begin{document}
\title[interior inverse problem]{ Interior inverse problem for global conservative multipeakon solutions of the Camassa-Holm equation}
\author[T. Liu]{Tao Liu}
\thanks{}
\address{School of Mathematics and Statistics, Shaanxi Normal
University, Xi'an 710062, PR China}
\email{liutaomath@163.com}

\author[K. Lyu]{Kang Lyu}
\address{School of Mathematics and Statistics, Nanjing
	University of Science and Technology, Nanjing 210094, PR China}
\email{lvkang201905@outlook.com}

\subjclass[2020]{Primary: 37K10, 34B07; Secondary: 34B09, 34A55}
\keywords{Interior inverse problem; The Camassa-Holm equation; Weyl-Titchmarsh function}

\begin{abstract}
 We consider the interior inverse problem associated with the global conservative {multipeakon} solution of the Camassa-Holm equation. 
Based on the inverse spectral theory on the half-line and the oscillation property of eigenfunctions, some 
{(non)}uniqueness results of {the} interior inverse problem are obtained. In addition,  we give the trace formula,  which connects the global conservative {multipeakon} solution with the corresponding eigenvalues and normalized eigenfunctions.
\end{abstract}

\maketitle

\section{Introduction}
The Camassa-Holm (CH) equation
\begin{equation}
u_t-u_{xxt}=2u_xu_{xx}-3uu_x+uu_{xxx}  \label{eq1.1}
\end{equation}%
has received a lot of attention during the last two decades (cf \cite{cam,cam1, laf, con, con1, con2, gey, len, len1, len2, len3, len4, mad, nat}).
 The isospectral problem \cite{bea1, bea, lun} of the CH equation is the weighted Sturm-Liouville equation
\begin{equation}
-f''(x)+\frac{1}{4}f(x)=z\omega(x,t)f(x), \ x\in \mathbb{R}.  \label{eq1.2}
\end{equation}%
Here $\omega=u-u_{xx}$ is called the weight function, $z\in \mathbb{C}$ is the complex parameter.
Inverse spectral theory for \eqref{eq1.2} is useful for solving the Cauchy problem of the CH equation.

  The multipeakon solution of the CH equation is the solution  of the form
 \begin{equation}
 u(x,t)=\frac{1}{2}\sum_{i=1}^N\omega_i(t)e^{-|x-x_i(t)|}, \label{eq1.3}
\end{equation}%
which represents the interaction of $N$ peaks at position $x_i, i=1,\cdots, N$. In this case,
 \begin{equation}
 \omega(x,t)=u(x,t)-u_{xx}(x,t)=\sum_{i=1}^N \omega_i(t)\delta_{x_i(t)},   \label{eq1.4}
\end{equation}%
where $\delta_{x_i(t)}$ is the Dirac measure at $x_i(t)$. Beals, Sattinger and Szmigielski ~\cite{bea} studied the multipeakon solution by solving the inverse problem of \eqref{eq1.2}. They observed that if $\omega$ is definite, then the  inverse problem  is  solvable for any $t\in \mathbb{R}$. Otherwise, the inverse problem is not always solvable. This causes the collision between some peaks, which leads to {finite-time blow-up resembling} wave breaking. Bressan and Constantin \cite{bre} introduced the pair $(u,\mu)$ to describe the overall picture of the collision, where $\mu$ is a non-negative Borel measure with the absolutely continuous part determined by
$u$ via
\begin{equation}
\mu_{ac}(B,t)= \int_B |u(x,t)|^2+|u_x(x,t)|^2 dx,\ t\in\mathbb{R},   \notag
\end{equation}
for each Borel set $B\in \mathcal{B}(\mathbb{R})$.
The pair $(u,\mu)$ is called the global conservative multipeakon solution (\cite{bre, eck2, hol}). In \cite{eck2}, Eckhardt and Kostenko introduced a generalized isospectral problem
 \begin{equation}
-f''(x)+\frac{1}{4}f(x)=z\omega(x,t)f(x)+z^2v(x,t),\ x\in \mathbb{R}.  \label{eq1.5}
\end{equation}%
In the indefinite case, the inverse problem  is always solvable  within the class of spectral problem \eqref{eq1.5}. {$v(\cdot, t)$  is the singular part of $\mu(\cdot,  t)$ and get switched on exact the time of collisions.}

Previous literature on spectral analysis and inverse spectral analysis for \eqref{eq1.5} always considered the case $v=0$ (cf \cite{bea, bea1,ben, con2, eck3, fu, chu1,chu2, chu3, chu4}).  For example,  In \cite{chu1}, Chu and Meng gave the maximization of eigenvalue ratios. 
If $v=0$, $\omega$ is definite and smooth enough, this problem  can be transformed  into a Sturm-Liouville problem \cite{bea1, len4} and hence direct and inverse spectral  conclusions may be drawn from this observation. We refer to \cite{boy, piv, wei, wei1, ges1, yang, yang2, you}  for some spectral and inverse spectral results  of Sturm-Liouville operators.  Inverse spectral problems for \eqref{eq1.5} with $v\ne0$ were widely studied by Eckhardt and his collaborators ({cf\cite{eck1, eck2, eck5, eck7} and the literature cited therein}). In  \cite{eck2},
Eckhardt and Kostenko  showed that $\omega$ and $v$ are uniquely determined by the spectral measure in the case that $\omega$ and $v$ are discrete and supported on a finite set.
In \cite{eck5}, Eckhardt and Kostenko  obtained some uniqueness results,  if $\omega$ and $v$ are periodic and discrete. In the case that $\omega$ and $v$ are general Borel measures, Eckhardt and Kostenko \cite{eck1}
showed that $\omega$ and $v$ are uniquely determined by the spectral measure under additional conditions on the growth property of solutions. We mention that the problem \eqref{eq1.5} with $f(x)/4$ vanished has been studied in \cite{kre, kre1}.

We want to mention a result about inverse Jacobi matrix problem. In \cite{ges},  Gesztesy and Simon reconstruct{ed} the Jacobi matrix from a diagonal Green's function, which motivates us to consider the following interior inverse problem: For given $a\in\mathbb{R}$, {reconstruct} $\omega, v$ from $\{\lambda_i, \varphi_i(a)\}_{i=1}^N$ (applying Lemma \ref{lemma3.1}), where $\varphi_i(a)$ is the  normalized eigenfunction defined by \eqref{eq2.6}.

There are two main difficulties in solving this interior inverse problem. Firstly, we study the diagonal Green's function $z\phi_+(z,a)\phi_-(z,a)/W(z)$ (see the definitions of $\phi_\pm(z,a), W(z)$ in Section 2) so {that} we can determine the Weyl-Titchmarsh function from the interior spectral data. The diagonal Green's function in \cite{ges} is a rational Herglotz-Nevanlinna function with 
\begin{align}
	c:=\deg(den)-\deg(num)=1,\nonumber
\end{align}
where $\deg(den)$ and $\deg(num)$ are the degrees of the denominator and numerator, respectively. However, there are three cases in our problem: $c=0,1,-1$. {This will yield more complicated results. Moreover, we also obtain various interior inverse spectral conclusions in terms of the signs of eigenvlalues (see Theorem \ref{aaaa} for example).}
Secondly, to determine the Weyl-Titchmarsh functions ($M_\pm(z,a)$) from the interior spectral data, we need to divide the poles of $M_+(z,a)+M_-(z,a)$ into two parts: the poles of $M_+(z,a)$, the poles of $M_-(z,a)$. However, the change of the signs of $\varphi_i(a)$ induce{s} that it is hard to study the poles, especially when $\varphi_i(a)=0$ for some $i$.
This is different from the previous papers {(cf\cite{eck,eck4,fu, wei,ges1, rio,piv,xu, yang ,yang1, yang2, fill, liu, liu1,liu2})}.

The second main result of this paper is the trace formula,  which connects the global conservative multipeakon {solution} $(u,\mu)$ with the corresponding eigenvalues and normalized eigenfunctions of \eqref{eq1.5}. Let $\{\lambda_i\}_{i=1}^{N}$ be the eigenvalues of \eqref{eq1.5}.  For a fixed $t$, let $\{\varphi_i(x,t)\}_{i=1}^{N}$ be the corresponding normalized eigenfunctions. Then we have (see Theorem \ref{theorem 4.1})
\begin{equation}
u(x,t)=\frac{1}{2}\sum_{i=1}^{N}\frac{|\varphi_i(x,t)|^2}{\lambda_i}.   \label{eq1.6}
\end{equation}
It is well-known that $u$ consists of $N$ widely separated peakons in the long time limit. By \eqref{eq1.6} we know that there is one $|\varphi_i|^2/2\lambda_i$ associated with each peakon.  This type of trace formula first appeared in \cite[Theorem 3.4]{gar} for the KdV equation.

 The structure of this paper is as follows. Necessary facts on the generalized spectral problem are collected in Section 2. In Section 3, we are concerned with the inverse interior problem under the condition that
 $\varphi_i(a)\neq0$ for any $i=1,\cdots,N$.  Section 4 considers the inverse interior problem with $\varphi_i(a)=0$ for some $i$. {The complete characterization of the interior spectral data corresponding to discrete measures $\omega$ and $ v$ is given.}
  In Section 5, we give the trace formula \eqref{eq1.6}. By virtue of \eqref{eq1.6}, we give the supremum of $|u(x,t)|$, which depends only on the height(depth) of the highest(deepest) peakon. 
In Section 6, we provide a proof for oscillation of eigenfunctions, which is needed for proving some interior inverse spectral results.

\section{The Generalized Spectral Problem}
In this section, we give some preliminary assertions about the generalized spectral problem. {We will closely follow the notation employed in \cite{eck2}.}

Let $n \in \mathbb{N}$ be fixed.
Suppose that $x_1<x_2<\cdots<x_n$. Define the measures $\omega$ and $v$ by
\begin{equation}
	\omega=\sum_{i=1}^n \omega_i\delta_{x_i}, \ v=\sum_{i=1}^n v_i\delta_{x_i},  \label{eq2.111}
\end{equation}
where $\omega_i\in\mathbb{R}$, $v_i\geq0$,  $\delta_{x_i}$ is the Dirac measure at $x_i,\ i=1,\cdots,n$. We always assume that
\begin{equation}
 |\omega_i|+v_i>0,\  i=1,\cdots, n.  \label{omegai}
\end{equation}

Consider the {generalized spectral problem}
\begin{equation}
Hf=-f''(x)+\frac{1}{4}f(x)=z\omega(x)f(x)+z^2v(x)f(x), \ x\in \mathbb{R},  \label{eq2.1}
\end{equation}
where $\omega$ and  $v$ are defined by \eqref{eq2.111}.
 Note that equation \eqref{eq2.1} has to be interpreted in the sense of distributions, namely, a function $f$ satisfies \eqref{eq2.1} if and only if
\begin{equation}
-f''(x)+\frac{1}{4}f(x)=0, x\in \mathbb{R}\backslash \{x_1,\cdots,x_n\},  \label{eq2.2}
\end{equation}
and for any $i=1,\cdots,n,$
\begin{align}
\label{lianxu}&f(x_i-)=f(x_i+),    \\
\label{tiaoyue} f'(x_i-)=f'(x_i&+)+z\omega_if(x_i+)+z^2v_if(x_i+).
\end{align}
{We} mention that $f$ is differentiable on $\mathbb{R}\setminus\{x_1,\cdots, x_n\}$. We define
\begin{align}
	f'(x_i):=f'(x_i-),\ i=1,\cdots,n,\label{zuolianxu}
\end{align}
so that $f'$ is left-continuous on $\mathbb{R}$.

 The values of the parameter $z$ for which  \eqref{eq2.1} has  nontrivial bounded solutions are called eigenvalues of $H$, and the corresponding nontrivial bounded solutions are called eigenfunctions. All the eigenvalues are real (see for example Theorem \ref{theorem5.9} or \cite{eck2}).
 The set of eigenvalues is called the spectrum of $H$ (denoted by $\sigma(H)$).

  For any $z\in\mathbb{C}$, there exist solutions $\phi_{\pm}(z,\cdot)$ of (\ref{eq2.1}) satisfying
\begin{equation}
 \phi_{\pm}(z,x)=e^{\mp\frac{x}{2}}   \label{eq2.222}
\end{equation}
for all $x$ near $\pm \infty$. The Wronskian of $\phi_{+}(z,x), \phi_{-}(z,x)$, which is independent of $x$, is given by
\begin{equation}\label{Wzdefine}
W(z)=\phi_{+}(z,x)\phi_-'(z,x)-\phi_{+}'(z,x)\phi_-(z,x), z\in\mathbb{C}.
\end{equation}
Note that by \eqref{eq2.1}, \eqref{lianxu} and \eqref{tiaoyue}, $W(z)$ is a polynomial with degree $\leq 2n$.
For $\lambda\in \mathbb{R}$, $W(\lambda)=0$ if and only if $\phi_{+}(\lambda,x)$ and $\phi_{-}(\lambda,x)$ are linear dependent. Namely, there exists a nonzero constant $c_{\lambda}$ such that
\begin{equation}\label{xishu}
\phi_{-}(\lambda,x)=c_{\lambda}\phi_{+}(\lambda,x).
\end{equation}
By \eqref{eq2.222}, we know that the zeros of $W(z)$ coincide with $\sigma(H)$. Therefore, $\sigma(H)$ consists of finitely many eigenvalues. Notice that $0\notin \sigma(H)$ because of
\begin{align}
{W(0)=1}. \label{w(0)}
\end{align}

For any eigenvalue $\lambda$, denote by
\begin{equation}
\gamma_{\lambda}^2:= \int_{\mathbb{R}}|\phi_{+}(\lambda,x)|^2 d  \omega(x)+ 2\lambda\int_{\mathbb{R}}|\phi_{+}(\lambda,x)|^2 d  v(x). \label{eq2.4}
\end{equation}
Applying \eqref{eq2.1}, by some calculations one can obtain
\begin{equation} \nonumber
\lambda\gamma_{\lambda}^2= \frac{1}{4}\int_{\mathbb{R}}|\phi_{+}(\lambda,x)|^2 dx+ \int_{\mathbb{R}}|\phi_{+}'(\lambda,x)|^2 dx+\lambda^2 \int_{\mathbb{R}}|\phi_{+}(\lambda,x)|^2 d  v(x)>0.
\end{equation}
We call $\lambda\gamma_{\lambda}^2$ the (modified) norming constant associated with the eigenvalue $\lambda$.
\begin{lemma}  \label{lemma2.1}
\cite[Proposition 3.1]{eck2} For any $\lambda\in\sigma(H)$, $\lambda$ is real and
\begin{equation}
\dot{W}(\lambda)=-c_{\lambda}\gamma_{\lambda}^2\neq0,   \label{eq2.5}
\end{equation}
where $c_\lambda$ is defined by \eqref{xishu}.
\end{lemma}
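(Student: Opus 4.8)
The plan is to prove the two assertions from a single integration-by-parts identity for solutions of \eqref{eq2.1}. First I would record the weak form: take a nontrivial bounded eigenfunction $\psi=\phi_{+}(\la,\cdot)$ (boundedness forces the decaying branch $e^{\mp x/2}$ at each end, and since $\la$ is a zero of $W(z)$ the vanishing Wronskian forces $\phi_{-}(\la,\cdot)=c_{\la}\phi_{+}(\la,\cdot)$, so $\psi$ decays at both $\pm\infty$). Multiplying \eqref{eq2.2} by $\overline{\psi}$, integrating over each interval $(x_i,x_{i+1})$, and collecting the interface contributions through \eqref{tiaoyue}, one obtains
\begin{equation}
\int_{\mathbb{R}}|\psi'|^2\,dx+\frac14\int_{\mathbb{R}}|\psi|^2\,dx=\la\int_{\mathbb{R}}|\psi|^2\,d\omega+\la^2\int_{\mathbb{R}}|\psi|^2\,dv, \nonumber
\end{equation}
the boundary terms at $\pm\infty$ vanishing by the decay of $\psi$.

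For the reality claim I would read this as a quadratic $B\la^2+A\la-C=0$, where $A=\int_{\mathbb{R}}|\psi|^2\,d\omega\in\mathbb{R}$, $B=\int_{\mathbb{R}}|\psi|^2\,dv\geq0$, and $C=\int_{\mathbb{R}}|\psi'|^2\,dx+\frac14\int_{\mathbb{R}}|\psi|^2\,dx>0$ (strictly, since $\psi\not\equiv0$) are all real. If $B>0$ the discriminant $A^2+4BC>0$ forces both roots to be real; if $B=0$ the equation is linear with real root $C/A$ (and necessarily $A\neq0$ because $C>0$). Hence $\la\in\mathbb{R}$, and therefore $\phi_{+}(\la,\cdot)$ is real-valued.

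For the Wronskian formula I would differentiate in $z$, writing a dot for $\partial_z$, and follow $W[\phi_{+}(z,\cdot),\dot\phi_{-}(z,\cdot)]:=\phi_{+}\dot\phi_{-}'-\phi_{+}'\dot\phi_{-}$ as a function of $x$. On each interval $\omega=v=0$, so both $\phi_{+}$ and $\dot\phi_{-}$ solve $-f''+\tfrac14 f=0$ and this quantity is piecewise constant; differentiating \eqref{lianxu} and \eqref{tiaoyue} in $z$ supplies the matching conditions for $\dot\phi_{-}$, and a short computation gives the jump $-(\omega_i+2zv_i)\phi_{+}(x_i)\phi_{-}(x_i)$ across each $x_i$ (the terms involving $\dot\phi_{-}$ cancel). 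The normalizations \eqref{eq2.222} give $W[\phi_{+},\dot\phi_{-}]=\dot W(z)$ near $+\infty$ and $=0$ near $-\infty$, so summing the jumps yields the polynomial identity
\begin{equation}
\dot W(z)=-\int_{\mathbb{R}}\phi_{+}(z,\cdot)\phi_{-}(z,\cdot)\,d\omega-2z\int_{\mathbb{R}}\phi_{+}(z,\cdot)\phi_{-}(z,\cdot)\,dv. \nonumber
\end{equation}
Setting $z=\la$, substituting $\phi_{-}(\la,\cdot)=c_{\la}\phi_{+}(\la,\cdot)$ and using that $\phi_{+}(\la,\cdot)$ is real, the right side becomes $-c_{\la}\gamma_{\la}^2$ by \eqref{eq2.4}, giving $\dot W(\la)=-c_{\la}\gamma_{\la}^2$. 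Nonvanishing is then immediate: $c_{\la}\neq0$, and the positivity identity displayed just before the lemma shows $\la\gamma_{\la}^2>0$, so $\gamma_{\la}^2\neq0$.

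I expect the variation-of-the-Wronskian step to be the main obstacle: one has to differentiate the interface conditions in $z$ correctly and verify that the $\dot\phi_{-}$-contributions in the jump of $W[\phi_{+},\dot\phi_{-}]$ cancel, leaving precisely the measure terms, while checking that the only surviving boundary term is $\dot W(z)$ at $+\infty$. The reality statement and the final nonvanishing are comparatively routine consequences of the Green's identity and the stated positivity of $\la\gamma_{\la}^2$.
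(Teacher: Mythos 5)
Your proposal is correct, and every step checks out: the Green identity argument gives reality (your handling of the degenerate case $\int_{\mathbb{R}}|\psi|^2\,dv=0$, where the equation becomes linear and $A\neq0$ is forced by $C>0$, is the detail most often missed), the variation-of-the-Wronskian computation does produce $\dot W(z)=-\int_{\mathbb{R}}\phi_+\phi_-\,d\omega-2z\int_{\mathbb{R}}\phi_+\phi_-\,dv$ because the $\dot\phi_-$-terms in the jump across each $x_i$ cancel against the undifferentiated interface conditions satisfied by $\phi_+$, and the boundary identifications $W[\phi_+,\dot\phi_-]=\dot W(z)$ near $+\infty$ and $=0$ near $-\infty$ hold since $\phi_\pm$ are $z$-independent near $\pm\infty$ by \eqref{eq2.222}. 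Bear in mind, though, that this paper never proves the lemma itself: it is quoted from \cite[Proposition 3.1]{eck2}, and your argument essentially reconstructs that cited proof (Lagrange-type identity plus differentiation of the Wronskian); the same Green identity is also what lies behind the positivity display $\lambda\gamma_\lambda^2>0$ stated without detail just before the lemma. The only place where the present paper gives an independent argument for part of the statement is the reality of the spectrum: Section 6 rewrites \eqref{eq2.1} as a finite matrix pencil $Jy=zDy$ with $J$ positive definite, reduces it to a symmetric eigenvalue problem, and invokes Sylvester's law of inertia (Theorem \ref{theorem5.9}), a route that yields not just reality but also simplicity, the signed count of eigenvalues, and the oscillation properties used in Sections 3--5. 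Your ODE-theoretic route is more elementary and self-contained for the lemma as stated, but it does not deliver those finer conclusions.
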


 We need to introduce more concepts for studying the interior spectral problem discussed in the following.
 For a fixed $a\in \mathbb{R}$, consider $H_{\pm}(\omega,v,a )$ defined by  restricting \eqref{eq2.1} to the intervals $I_{\pm}(a)$ with Dirichlet boundary condition at $a$, where $I_+(a)=[a,+\infty)$ and $I_-(a)=(-\infty, a)$.
The Weyl-Titchmarsh functions of $H_{+}(\omega,v,a )$ and $H_{-}(\omega,v ,a)$ are defined by
\begin{equation}\label{eq212}
M_+(z, a)=\frac{\phi_+'(z,a)}{{z}\phi_+(z,a)}
 \end{equation}%
 and
  \begin{equation}\label{M-za}
  M_-(z, a)=-\frac{\phi_-'(z,a)}{{z}\phi_-(z,a)},
   \end{equation}%
  respectively. Clearly, ${M_+(z, a)}$, $ {M_-(z, a)}$ are Herglotz-Nevanlinna functions (see {\cite[Lemma 4.9]{eck1}}). Note that  \begin{align}
  \phi_{\pm}(0,a)=e^{\mp\frac{a}{2}}. \label{phi_+(0,a)}
  \end{align}
  Then by \eqref{eq212} and \eqref{M-za}, we have that
  \begin{align}\label{M+0aM-0a}
  {\rm{Res}}_{z=0}M_+(z,a)={\rm{Res}}_{z=0}M_-(z,a)=-\frac{1}{2}.
  \end{align}
Denote by
\begin{equation}
 l_i:=2\left(\tanh\left(\frac{x_{i+1}-a}{2}\right)-\tanh\left(\frac{x_i-a}{2}\right)\right),i=0,\cdots, n,  \label{eq2.18}
  \end{equation}
and
\begin{equation}
m_i(z):=(zv_i+\omega_i)\cosh^2\left(\frac{x_i-a}{2}\right),z\in \mathbb{C}, i=1,\cdots, n,         \label{eq2.19}
  \end{equation}
where  $x_0:=-\infty$ and $x_{n+1}:=+\infty$.

The proofs of the following two lemmas are similar to that of \cite[Lemma 3.4]{eck5}. For readers' convenience, we include a proof for Lemma \ref{lemma2.4}.

\begin{lemma}
		Suppose that $a\in (x_{n_0},x_{n_0+1}]$ for some $n_0$, then the function $M_+(z,a)$ has the following finite continued fraction expansion
	\begin{equation}
	M_+(z,a)=\cfrac{1}{-l(n_0)z+\cfrac{1}{m_{n_0+1}(z)+\cfrac{1}{-l_{n_0+1}z+\cfrac{1}{\ddots+\cfrac{1}{-l_{n-1}z+\cfrac{1}{m_n(z)-\cfrac{1}{l_nz}}}}}}},   \label{eq2.21}
	\end{equation}
	where
	\begin{equation}
	l(n_0)=2\tanh\left(\frac{x_{n_0+1}-a}{2}\right).   \label{eq2.22}
	\end{equation}
\end{lemma}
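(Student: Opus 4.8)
The plan is to trivialize the free motion between the point masses by a change of variables that turns the hyperbolic transfer into an affine map, after which the continued fraction emerges from a single right-to-left sweep. I would introduce the new independent variable $\theta=\tanh(\tfrac{x-a}{2})\in(-1,1)$ together with the renormalized solution $\psi(\theta):=\phi_+(z,x)\cosh(\tfrac{x-a}{2})^{-1}$. On every gap $(x_i,x_{i+1})$ the solution of \eqref{eq2.2} is a combination of $\cosh(\tfrac{x-a}{2})$ and $2\sinh(\tfrac{x-a}{2})$, so $\psi$ is an affine function of $\theta$; in particular its slope $w:=d\psi/d\theta$ is constant on each gap, while $\psi$ itself stays continuous across the mass points. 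The normalization \eqref{eq2.222} near $+\infty$ reads $\psi(\theta)=e^{-a/2}(1-\theta)$ on the rightmost gap, i.e. $\psi=0$ at $\theta=1$ with slope $w_n=-e^{-a/2}$; this supplies the initial datum of the sweep. Writing $\theta_i:=\tanh(\tfrac{x_i-a}{2})$ and $\theta_{n+1}:=1$, the mass $x_i$ sits at $\theta_i$ and the point $a$ sits at $\theta=0$.

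Next I would record the two elementary transfer rules in the $(\theta,\psi)$ picture. Crossing a gap leaves $w$ unchanged while $\psi$ varies linearly, and since $\theta_{i+1}-\theta_i=l_i/2$ by \eqref{eq2.18}, this increment is exactly the one carried by $l_i$. Crossing the mass at $x_i$ keeps $\psi$ continuous while the jump condition \eqref{tiaoyue} forces a slope jump; rewriting $\phi_+(x_i)\cosh(\tfrac{x_i-a}{2})=\psi(\theta_i)\cosh^2(\tfrac{x_i-a}{2})$ and using $z\omega_i+z^2v_i=z(zv_i+\omega_i)$ turns this into $w_{i-1}-w_i=2z\,m_i\,\psi(\theta_i)$ with $m_i$ as in \eqref{eq2.19}. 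The identity $\cosh^2-\sinh^2=1$ is what makes these constants align.

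I would then route everything through the Weyl function. At $\theta=0$ one has $\cosh=1$ and $\sinh=0$, so $\phi_+(z,a)=\psi(0)$ and $\phi_+'(z,a)=\tfrac12 w(0)$, whence $M_+(z,a)=w(0)(2z\,\psi(0))^{-1}$ by \eqref{eq212}. Setting $\Lambda:=w(2z\psi)^{-1}$ and denoting by $\Lambda_i^{R}$, $\Lambda_i^{L}$ its values immediately to the right and left of $x_i$, the two transfer rules become the gap rule $\Lambda_i^{R}=(-l_i z+(\Lambda_{i+1}^{L})^{-1})^{-1}$ and the mass rule $\Lambda_i^{L}=m_i+\Lambda_i^{R}$. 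The initial datum yields the innermost level $\Lambda_n^{R}=-(l_n z)^{-1}$, and the terminal partial gap, run from $\theta_{n_0+1}$ down to $\theta=0$, has $\theta$-length $\theta_{n_0+1}-0=\tanh(\tfrac{x_{n_0+1}-a}{2})$ and therefore contributes the outermost term $-l(n_0)z$ with $l(n_0)=2\tanh(\tfrac{x_{n_0+1}-a}{2})$, which is precisely \eqref{eq2.22}. Composing the rules for $i=n,n-1,\dots,n_0+1$ stacks the levels $m_n,\,l_{n-1},\,m_{n-1},\dots,m_{n_0+1},\,l(n_0)$ and reproduces \eqref{eq2.21}.

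The main obstacle is the second step: pinning down the slope-jump rule with the exact constants, so that $l_i$ and $m_i$ come out verbatim as in \eqref{eq2.18}--\eqref{eq2.19}. Every factor of $2$ must be tracked consistently---the $2\sinh(\tfrac{x-a}{2})$ in the second fundamental solution, the $2$ in $l_i=2(\theta_{i+1}-\theta_i)$, and the $\phi_+'=\tfrac12 w$ at $\theta=0$---and the $\cosh^2(\tfrac{x_i-a}{2})$ factor defining $m_i$ appears only after the renormalization by $\cosh(\tfrac{x-a}{2})$ is combined with $\cosh^2-\sinh^2=1$. A minor additional point is the boundary case $a=x_{n_0+1}$, where $\phi_+'(a)$ is read as the left derivative via \eqref{zuolianxu}; since the affine description of $\psi$ extends continuously up to $\theta_{n_0+1}$, the same computation covers it.
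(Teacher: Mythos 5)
Your proof is correct and takes essentially the same route as the paper's proof of the companion Lemma \ref{lemma2.4} (the paper treats $M_-$ and declares the $M_+$ case similar): your $\Lambda=w/(2z\psi)$ is precisely the $\phi_+$-analogue of the paper's auxiliary function $\Lambda_-(z,x)/z$ from \eqref{eq2.17}, your mass rule reproduces the jump relation \eqref{m_i}, and your gap rule reproduces \eqref{eq2.25}--\eqref{eq2.26}. The only difference is cosmetic: you derive the gap rule from the affinity of $\psi$ in $\theta=\tanh\left(\frac{x-a}{2}\right)$ rather than by integrating the Riccati identity \eqref{Lambda}, but the resulting recursion and its composition into the continued fraction \eqref{eq2.21} are identical.
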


\begin{remark}
	By \eqref{eq2.21}, we have
	\begin{equation}
	M_+(z,x_{n_0+1})=m_{n_0+1}(z)+\cfrac{1}{-l_{n_0+1}z+\cfrac{1}{m_{n_0+2}(z)+\cfrac{1}{\ddots+\cfrac{1}{-l_{n-1}z+\cfrac{1}{m_n(z)-\cfrac{1}{l_nz}}}}}}.   \label{eq2.23}
	\end{equation}
\end{remark}

\begin{lemma} \label{lemma2.4}
 Suppose that $a\in (x_{n_0},x_{n_0+1}]$ for some $n_0$, then the function $M_-(z,a)$ has the following finite continued fraction expansion
	\begin{equation}
	M_-(z,a)=\cfrac{1}{-\tilde{l}(n_0)z+\cfrac{1}{m_{n_0}(z)+\cfrac{1}{-l_{n_0-1}z+\cfrac{1}{\ddots+\cfrac{1}{-l_{1}z+\cfrac{1}{m_1(z)-\cfrac{1}{l_0z}}}}}}},   \label{eq2.20}
	\end{equation}
	where
	\begin{equation}
	\tilde{l}(n_0)=-2\tanh\Big(\frac{{x_{n_0}}-a}{2}\Big). \label{eq2.200}
	\end{equation}
\end{lemma}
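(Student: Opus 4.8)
The plan is to reduce the left half-line problem on $I_-(a)=(-\infty,a)$ to a finite Stieltjes string by a Liouville-type substitution adapted to the base point $a$, and then read off the continued fraction from the resulting string recursion. Concretely, I set $\phi_0(x):=\cosh\big(\tfrac{x-a}{2}\big)$, which solves the homogeneous equation $-f''+\tfrac14 f=0$, and introduce the new independent variable $\xi:=2\tanh\big(\tfrac{x-a}{2}\big)\in(-2,2)$, so that $d\xi=\phi_0(x)^{-2}\,dx$. Writing a solution of \eqref{eq2.1} as $f=\phi_0 h$ and using the identity $-(\phi_0 h)''+\tfrac14\phi_0 h=-\phi_0^{-3}\ddot h$ (dots denoting $d/d\xi$), the equation on each gap between the masses becomes $\ddot h=0$, i.e.\ $h$ is affine in $\xi$.

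First I would transcribe the interface data. The mass points $x_1<\dots<x_{n_0}$ lying to the left of $a$ map to $\xi_i:=2\tanh(\tfrac{x_i-a}{2})$, and by \eqref{eq2.18} and \eqref{eq2.200} the gap lengths in the $\xi$-variable are exactly $l_0,\dots,l_{n_0-1}$ between consecutive masses, $\tilde l(n_0)=-2\tanh(\tfrac{x_{n_0}-a}{2})$ between $x_{n_0}$ and $\xi=0$ (i.e.\ $x=a$), and $l_0=2+2\tanh(\tfrac{x_1-a}{2})$ between the singular end $\xi=-2$ (i.e.\ $x=-\infty$) and $x_1$. The jump condition \eqref{tiaoyue} transforms, using $f=\phi_0h$ and continuity of $\phi_0,h$, into $\dot h(\xi_i-)-\dot h(\xi_i+)=z(\omega_i+zv_i)\phi_0(x_i)^2h(\xi_i)=z\,m_i(z)\,h(\xi_i)$ with $m_i(z)$ as in \eqref{eq2.19}. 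Finally, the normalization $\phi_-(z,x)=e^{x/2}$ near $-\infty$ from \eqref{eq2.222} gives, after the substitution, $h(\xi)=e^{a/2}(1+\tfrac{\xi}{2})$ on the leftmost gap, so $h$ vanishes at $\xi=-2$; this Dirichlet-type condition at the singular end is what anchors the recursion. At the right end $\phi_0(a)=1$ and $\phi_0'(a)=0$ give $\phi_-(z,a)=h(0)$ and $\phi_-'(z,a)=\dot h(0)$, whence by \eqref{M-za} one has $M_-(z,a)=-\dot h(0)/\big(z\,h(0)\big)$.

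Next I would run the string recursion for $S(\xi):=h(\xi)/\dot h(\xi)$, starting from $S=0$ at $\xi=-2$. Across a gap of length $\ell$ the slope $\dot h$ is constant, so $S$ is additive, $S\mapsto S+\ell$; across a mass at $\xi_i$ the jump relation gives $1/S(\xi_i+)=1/S(\xi_i-)-z\,m_i(z)$, a single Möbius step. Iterating from $\xi=-2$ through $x_1,\dots,x_{n_0}$ up to $\xi=0$ produces a finite continued fraction for $S(0)$ whose layers alternate between the additive pieces $l_0,l_1,\dots,\tilde l(n_0)$ and the Möbius pieces $-z\,m_i(z)$, and then $M_-(z,a)=-1/\big(z\,S(0)\big)$.

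It remains to bring this into the exact form \eqref{eq2.20}. This is a routine but sign-sensitive continued-fraction equivalence transformation: pulling the factor $-z$ (alternately $-1/z$) through successive layers turns each gap layer $\ell$ into $-\ell z$ and each mass layer $-z\,m_i(z)$ into $m_i(z)$, converting $-1/(z\,S(0))$ into the stated fraction, with the innermost term $m_1(z)+1/(-z l_0)=m_1(z)-1/(l_0 z)$. I expect the only genuinely delicate points to be (i) justifying the Dirichlet behaviour $h(-2)=0$ at the singular endpoint directly from the asymptotic normalization \eqref{eq2.222} of $\phi_-$, rather than from a regular boundary condition, and (ii) bookkeeping the alternating $(-z)^{\pm1}$ factors in the equivalence transformation so that every sign and every placement of $z$ matches \eqref{eq2.20}; the reduction to the string and the verification that the transformed lengths and masses are precisely $l_i,\tilde l(n_0),m_i(z)$ are then straightforward computations. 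The expansion for $M_+(z,a)$ in the preceding lemma is obtained the same way, propagating from $+\infty$ toward $a$.
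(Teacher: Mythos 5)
Your proposal is correct and is essentially the paper's own proof in different clothing: the paper propagates the Riccati variable $\Lambda_-(z,x)=-\frac{\phi_-'(z,x)}{\phi_-(z,x)}\cosh^2\left(\frac{x-a}{2}\right)+\frac{1}{2}\sinh\left(\frac{x-a}{2}\right)\cosh\left(\frac{x-a}{2}\right)$, which is exactly $-1/S(\xi)$ in your string picture, so your additivity of $S$ in $\xi$ across the gaps is the paper's integration of $\Lambda_-'/\Lambda_-^2=1/\cosh^2\left(\frac{x-a}{2}\right)$, your M\"obius step at each mass is the paper's jump relation $\Lambda_-(z,x_i+)-\Lambda_-(z,x_i)=zm_i(z)$, and your final identity $M_-(z,a)=-1/(zS(0))$ is the paper's $M_-(z,a)=\Lambda_-(z,a)/z$. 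Your ``delicate point (i)'' is in fact not delicate: since there are no masses in $(-\infty,x_1]$, one has $\phi_-(z,x)=e^{x/2}$ on that whole interval, hence $h(\xi)=e^{a/2}(1+\xi/2)$ exactly on the leftmost gap and $h(-2)=0$ follows; this is precisely the paper's computation $\Lambda_-(z,x_1)=-1/l_0$.
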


\begin{proof}
	If $a\in (-\infty, x_1]$, it is obvious that  \eqref{eq2.20} holds. We next show that \eqref{eq2.20} holds in the case that $a\in (x_{n_0},x_{n_0+1}]$ with $n_0\ge1$.
	Define
	\begin{equation}
	\Lambda_-(z,x)=-\frac{\phi_-'(z,x)}{\phi_-(z,x)}\cosh^2\left(\frac{x-a}{2}\right)+\frac{1}{2} \sinh\left(\frac{x-a}{2}\right)
	\cosh\left(\frac{x-a}{2}\right).  \label{eq2.17}
	\end{equation}
From  {\eqref{zuolianxu}, \eqref{eq2.222} } and \eqref{eq2.17}, one can obtain
\begin{equation}  \nonumber
\Lambda_-(z,x_1)=-\frac{e^{-(x_1-a)}+1}{4}=-\frac{1}{l_0}.
\end{equation}
Applying \eqref{lianxu}, \eqref{tiaoyue}, \eqref{zuolianxu} and \eqref{eq2.19}, for $i=1,\cdots,n$, one has
\begin{equation}
\Lambda_-(z,x_i+)-\Lambda_-(z,x_i)=(z\omega_i+z^2v_i)\cosh^2\left(\frac{x_i-a}{2}\right)=zm_i(z).   \label{m_i}
\end{equation}
{Differentiating} \eqref{eq2.17} with respect to $x$, after some calculations, we have that for any $x\in \mathbb{R} \setminus\{x_1,\cdots, x_n\}$,
\begin{equation}
\frac{\Lambda_-'(z,x)}{\Lambda_-(z,x)^2}=\frac{1}{\cosh^2\left(\frac{x-a}{2}\right)}.  \label{Lambda}
\end{equation}
Integrating \eqref{Lambda} from $x_i+$ to $x_{i+1}$, and from $x_{n_0}+$ to $a$, we conclude that
\begin{align}
\frac{1}{\Lambda_-(z,x_{i+1})}-\frac{1}{\Lambda_-(z,x_i+)}&=-2\tanh\left(\frac{x-a}{2}\right)\Big|_{x_i}^{x_{i+1}}=-l_i,  \label{eq2.25} \\
\frac{1}{\Lambda_-(z,a)}-\frac{1}{\Lambda_-(z,x_{n_0}+)}&=-2\tanh\left(\frac{x-a}{2}\right)\Big|_{x_{n_0}}^{a}=-\tilde{l}(n_0).    \label{eq2.26}
\end{align}
Thus from \eqref{m_i}, \eqref{eq2.25} and \eqref{eq2.26}, we have
\begin{equation}
\frac{1}{\Lambda_-(z,x_{i+1})}=-l_i+\frac{1}{zm_i(z)+\Lambda_-(z, x_{i})},    \nonumber
\end{equation}
and
\begin{equation}
\frac{1}{\Lambda_-(z,a)}=-\tilde{l}(n_0)+\frac{1}{zm_{n_0}(z)+\Lambda_-(z,x_{n_0})}.  \nonumber
\end{equation}
Note that
\begin{equation} \nonumber
M_-(z,a)=\frac{\Lambda_-(z,a)}{z}.
\end{equation}
Therefore, one can obtain \eqref{eq2.21}.
\end{proof}
\begin{remark}
	By Lemma \ref{lemma2.4}, we have that
 \begin{align}\label{limM-zaoverz}
\lim_{\abs{z}\to+\infty}M_-(z,a)=0.
\end{align}		
\end{remark}
The proofs of the following two lemmas are similar to that of \cite[Theorem 5.1]{eck5}. For reader's convenience, we give a proof of Lemma \ref{lemma2.7}.
\begin{lemma}  \label{lemma2.6}
	Let $M_+(z)$ be a rational Herglotz-Nevanlinna function with
	\begin{align}\label{M+0a1}
	{\rm{Res}}_{z=0}M_+(z)=-\frac{1}{2}.
	\end{align}
	Then 	for any $a\in \mathbb{R}$, there exists unique $\omega|_{[a,+\infty)}$, $v|_{[a,+\infty)}$ of the form $\eqref{eq2.111}$, so that  the  Weyl-Titchmarsh function of $H_+(\omega, v, a)$ coincides with $M_+(z)$.
\end{lemma}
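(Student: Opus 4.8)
The plan is to invert the expansion \eqref{eq2.21}. Reading \eqref{eq2.21} together with \eqref{eq2.18}, \eqref{eq2.19} and \eqref{eq2.22} as the forward map $(\omega|_{[a,+\infty)},v|_{[a,+\infty)})\mapsto M_+(z,a)$, I would prove the lemma by showing that this map is a bijection onto the class of rational Herglotz-Nevanlinna functions satisfying \eqref{M+0a1}. Existence then amounts to recovering the continued-fraction coefficients $l(n_0)$, $m_i(z)$, $l_i$ from an arbitrary such $M_+$ by peeling off \eqref{eq2.21} one level at a time, while uniqueness follows because every peeling step is forced, so the coefficients, and hence $x_i,\omega_i,v_i$, are uniquely determined.

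The peeling alternates two elementary operations, each preserving the rational Herglotz-Nevanlinna class. Since $M_+$ is Herglotz-Nevanlinna, so is $-1/M_+$; its coefficient of $z$ at infinity (the nonnegative linear term in its Herglotz-Nevanlinna representation) exists, and I would set $l(n_0)$ equal to it, matching the outer term $-l(n_0)z$ of \eqref{eq2.21}. Subtracting $l(n_0)z$ leaves a Herglotz-Nevanlinna function whose negative reciprocal is the tail $\widehat M(z)$ of \eqref{eq2.23}; its affine part at infinity is precisely $m_{n_0+1}(z)$, whose coefficient of $z$ and constant term I would read off from the behavior of $\widehat M$ at infinity. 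Subtracting $m_{n_0+1}(z)$ and taking the negative reciprocal once more returns a rational Herglotz-Nevanlinna function of strictly smaller degree, and I would recurse, extracting $l_{n_0+1}$, then $m_{n_0+2}(z)$, and so on; the recursion terminates at the single pole $-1/(l_n z)$ at the origin, the number of affine extractions equalling the number of point masses in $[a,+\infty)$. At each reciprocal step the coefficient $l_i$ and at each affine step the slope $v_i\cosh^2((x_i-a)/2)$ are nonnegative, forced by the Herglotz-Nevanlinna property.

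It remains to turn these coefficients into admissible data. From the partial sums $l(n_0)+l_{n_0+1}+\cdots$ I would recover $\tanh((x_i-a)/2)$ as half of these through \eqref{eq2.18} and \eqref{eq2.22}; strict positivity of the $l_i$ makes the sums strictly increasing, and the normalization \eqref{M+0a1} is, by the $z\to 0$ analysis of \eqref{eq2.21}, exactly equivalent to $l(n_0)+l_{n_0+1}+\cdots+l_n=2$, i.e. to $\tanh((x_{n+1}-a)/2)=1$. Together these place all recovered points in $(a,+\infty)$ with $x_{n+1}=+\infty$, so $\omega$ and $v$ are supported on a finite subset of $[a,+\infty)$. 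With each $x_i$ known, $\cosh^2((x_i-a)/2)$ is determined, and \eqref{eq2.19} yields $v_i\ge 0$ from the slope of $m_i$ and $\omega_i\in\mathbb{R}$ from its constant term; since each $m_i$ arising as a genuine term of \eqref{eq2.21} is not identically zero, this gives $|\omega_i|+v_i>0$.

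The main difficulty will be the sign-and-termination bookkeeping: I must verify rigorously that each reciprocal step produces a strictly positive $l_i$ (so that no two recovered positions collide and the algorithm genuinely advances) and that each affine step produces a nonnegative slope, both resting on careful use of the Herglotz-Nevanlinna property, and I must reconcile the constraint coming from the pole at $z=0$ (which fixes $l(n_0)+\cdots+l_n=2$) with the behavior at $z=\infty$ that identifies the coefficients. The most delicate point is the boundary case $a=x_{n_0+1}$, in which $M_+$ does not vanish at infinity and the expansion begins instead with an $m$-term as in \eqref{eq2.23}, so the first operation is an affine extraction rather than a reciprocal; this case must be separated out and handled first.
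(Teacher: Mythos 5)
Your proposal is correct in substance and its skeleton is the same as the paper's: the paper proves the companion Lemma \ref{lemma2.7} (for $M_-$) and declares Lemma \ref{lemma2.6} analogous, and that proof runs exactly through your steps --- obtain the finite continued fraction expansion \eqref{eq2.20} (resp.\ \eqref{eq2.21}), observe that the residue condition at $z=0$ forces the total length $\tilde l(n_0)+\sum_j l_j$ (resp.\ $l(n_0)+\cdots+l_n$) to equal $2$, recover the positions from the partial sums via \eqref{eq2.18}, \eqref{eq2.22} and the weights via \eqref{eq2.19}, and deduce uniqueness of $\omega,v$ from uniqueness of the expansion. The one genuine difference is where the expansion comes from: the paper simply cites \cite[Corollaries 4.3 and 4.4]{eck6} for existence and uniqueness of the expansion with positive lengths and nonzero affine $m_i$ of nonnegative slope, whereas you propose to re-derive it by the classical Stieltjes peeling algorithm. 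Your route is self-contained and makes uniqueness transparent (each extraction is forced by the asymptotics at infinity), at the cost of the bookkeeping you flag; the paper's route is shorter but opaque about exactly these points.

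For completeness: the difficulties you list are real but all resolvable by one observation you do not quite articulate, namely that the hypothesis \eqref{M+0a1} gives $M_+$ a pole at $z=0$, and this pole propagates through every step of the algorithm (each negative reciprocal has a zero at the origin, which survives subtraction of the linear or affine part, so the next negative reciprocal again has a pole at the origin). Consequently the remainder after an affine extraction is never identically zero, so the algorithm can only terminate at a reciprocal step, i.e.\ with the term $-1/(l_nz)$, and each extracted $m_i$ is automatically nonzero (its value or slope at infinity is $-1/c\neq 0$ or $1/\sum_j r_j>0$ according to whether the preceding remainder tends to $c\neq0$ or to $0$); likewise every intermediate $l_i$ is strictly positive because the preceding remainder vanishes at infinity like $-(\sum_j s_j)/z$ with $\sum_j s_j>0$. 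Finally, the boundary case you propose to separate out does not need special treatment: if $M_+$ does not vanish at infinity, the first extracted linear coefficient is simply $l(n_0)=0$, which via \eqref{eq2.22} places the first mass exactly at $a$, consistent with \eqref{eq2.23} and with $I_+(a)=[a,+\infty)$ being closed (so your parenthetical claim that all recovered points lie in $(a,+\infty)$ should read $[a,+\infty)$).
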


\begin{lemma} \label{lemma2.7}
	Let $M_-(z)$ be a rational Herglotz-Nevanlinna function with
	\begin{align}\label{limM-zaover}
	\lim_{\abs{z}\to+\infty}M_-(z)=0
	\end{align}
	and
\begin{align}\label{M+0a2}
{\rm{Res}}_{z=0}M_-(z)=-\frac{1}{2}.
\end{align}
Then for any $a\in \mathbb{R}$, there exists  unique $\omega|_{(-\infty,a)}$, $v|_{(-\infty,a)}$ of the form $\eqref{eq2.111}$, so that  the  Weyl-Titchmarsh function of $H_-(\omega, v, a)$ coincides with $M_-(z)$.
\end{lemma}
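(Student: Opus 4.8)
The plan is to reverse the continued fraction expansion \eqref{eq2.20} supplied by Lemma \ref{lemma2.4}: I will show that the two hypotheses force $M_-(z)$ to admit an expansion of exactly that shape with coefficients of the correct sign, then read off the positions and the weights from those coefficients, and obtain uniqueness from the fact that every step of the expansion is forced.

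First, recall that a rational Herglotz-Nevanlinna function has the form $M_-(z)=c+dz+\sum_k\alpha_k/(\lambda_k-z)$ with $d\ge0$, $\alpha_k>0$ and real simple poles $\lambda_k$. The normalization \eqref{limM-zaover} forces $c=d=0$, so $M_-$ vanishes at infinity to order exactly one, and \eqref{M+0a2} places one pole at $\lambda=0$ with $\alpha=1/2$. I then run an inversion-and-subtraction (peeling) algorithm alternating two kinds of steps. At a step of type $l$ the current function vanishes at infinity, so $-1/(\cdot)$ is again Herglotz-Nevanlinna and grows linearly; its (positive) growth rate is taken as $\tilde l(n_0)$, respectively $l_i$, and only this linear term is subtracted, leaving a function bounded at infinity. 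At a step of type $m$ the current function is bounded, and the full polynomial part (degree $\le1$) of its reciprocal is subtracted; this polynomial is $m_i(z)=(zv_i+\omega_i)\cosh^2((x_i-a)/2)$, and its leading coefficient is $\ge0$ by the Herglotz-Nevanlinna property, so $v_i\ge0$, while $|\omega_i|+v_i>0$ holds because a vanishing layer would lower the degree and cannot occur. The two types alternate, beginning with a step of type $l$ (forced by \eqref{limM-zaover}), and the degree of the remaining rational function strictly decreases, so the procedure terminates.

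Closure is governed by \eqref{M+0a2}. Tracking $z\to0$, the innermost singular term $-1/(l_0z)$ propagates outward through the alternating layers to give $\lim_{z\to0}zM_-(z)=-1/(\tilde l(n_0)+\sum_{j=0}^{n_0-1}l_j)$; matching this with ${\rm Res}_{z=0}M_-=-1/2$ yields the closing identity $\tilde l(n_0)+\sum_{j=0}^{n_0-1}l_j=2$. Using $x_0=-\infty$, i.e. $\tanh((x_0-a)/2)=-1$, the defining relation \eqref{eq2.18} telescopes to $\tanh((x_i-a)/2)=-1+\tfrac12\sum_{j<i}l_j$; positivity of the $l_j$ makes these values strictly increasing, and the closing identity makes them strictly negative, so each $x_i<a$ is uniquely determined and $\tilde l(n_0)$ reproduces \eqref{eq2.200}. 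With the $x_i$ in hand, $\cosh^2((x_i-a)/2)$ is known and \eqref{eq2.19} returns $v_i$ and $\omega_i$ as the linear and constant coefficients of $m_i(z)$. Uniqueness is then immediate, since at each stage the growth rate, the affine part, and the terminal residue are invariants of $M_-$, so the continued-fraction coefficients, and hence the measures \eqref{eq2.111} on $(-\infty,a)$, are determined without ambiguity.

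The step I expect to be the main obstacle is the bookkeeping in the peeling: propagating the Herglotz-Nevanlinna property, and with it the signs $l_i>0$ and $v_i\ge0$, through each inversion, and correctly distinguishing a step of type $l$, at which only the linear part is removed, from a step of type $m$, at which the entire affine part is removed. It is precisely this distinction that reproduces the alternating pattern of \eqref{eq2.20} and, together with \eqref{M+0a2}, makes the expansion terminate on the term $-1/(l_0z)$ instead of leaving an unmatched remainder.
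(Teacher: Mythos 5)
Your plan follows the same skeleton as the paper's proof --- obtain a continued fraction expansion of the form \eqref{eq2.20}, use \eqref{M+0a2} to get the closing identity $\tilde{l}(n_0)+\sum_{j=0}^{n_0-1}l_j=2$, solve the telescoped $\tanh$ relations for the $x_i$, read $\omega_i,v_i$ off \eqref{eq2.19}, and get uniqueness from the uniqueness of the expansion --- but it differs in one substantive way: the paper does not prove existence of the expansion at all, it cites \cite[Corollaries 4.3 and 4.4]{eck6}, whereas you propose to manufacture the expansion by an explicit peeling algorithm. Most of that part of your argument is sound: $l_i>0$ as the growth rate of a Herglotz--Nevanlinna function, $v_i\ge 0$ from the nonnegativity of the linear coefficient, the alternation of the two step types forced by which functions vanish at infinity, and uniqueness because every coefficient produced by the peeling is an invariant of $M_-$. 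Your recovery of the $x_i$, including the observation that the values $-1+\tfrac{1}{2}\sum_{j<i}l_j$ all lie in $(-1,0)$, matches the paper.

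The genuine gap is exactly the one you flagged: you never prove that the peeling terminates on the term $-1/(l_0z)$, i.e. that the expansion really has the form \eqref{eq2.20}. Your ``closure'' paragraph is circular on this point: you assume the innermost term is $-1/(l_0z)$, propagate it outward, and only then invoke \eqref{M+0a2} to fix $\sum l_j$; but \eqref{M+0a2} is needed earlier, to rule out the other ways your algorithm can stop. Indeed, $M(z)=1/(1-z)$ is rational Herglotz--Nevanlinna and satisfies \eqref{limM-zaover}, yet its peeling terminates after one $l$-step and one $m$-step with remainder identically zero, $M(z)=1/(-z+1/m_1)$ with $m_1=1$, so no term $-1/(l_0z)$ ever appears; the only thing that excludes this for your $M_-$ is its pole at $z=0$. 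The fix is a short induction showing that this pole survives every step: if the current function $F$ vanishes at infinity and has a pole at $0$, then $-1/F$ vanishes at $z=0$, hence so does $(-1/F)-l_iz$; therefore its negative reciprocal (the next $m$-level function) again has a pole at $0$, and subtracting the affine part $m_i$ cannot remove that pole. Consequently no $m$-step can end with zero remainder, and the process must stop at an $l$-step whose input has a single pole, necessarily located at $0$, i.e. the input equals $-1/(l_0z)$. (Incidentally, the boundedness at infinity of the $m$-level input is also the correct reason why $m_i\not\equiv 0$, hence $|\omega_i|+v_i>0$; your ``a vanishing layer would lower the degree'' remark does not capture this.) With that induction inserted, your argument is complete, and is in fact more self-contained than the paper's.
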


\begin{proof}
{By \eqref{limM-zaover} and \cite[Corollarys 4.3 and 4.4]{eck6},} $M_-(z)$ has a finite continued fraction expansion of the form \eqref{eq2.20} for some positive $\tilde{l}(n_0), l_{n_0-1},\cdots, l_0,$  and some real, nonzero polynomials $m_i(z)$ of degree at most one with $\dot{m}_i(0)\ge0 $ for all $i=1,\cdots,n_0$. 	Note that
\begin{equation} \label{11111}
\tilde{l}(n_0)+\sum_{j=0}^{n_0-1} l_j=-\lim_{z\rightarrow 0}  \frac{1}{zm(z)}=2.
\end{equation}
Then for any $a\in \mathbb{R}$, we could define $x_{n_0},\cdots, x_{1},$ by
	\begin{align}
	-2\tanh\left(\frac{{x_{n_0}}-a}{2}\right)&=\tilde{l}(n_0), \label{eq2.2000}   \\
	-2\tanh\left(\frac{{x_{i}}-a}{2}\right)&=\tilde{l}(n_0)+\sum_{j=i}^{n_0-1} l_j, i={n_0-1},\cdots,1.\label{eq2.20000}
	\end{align}
By \eqref{11111}, \eqref{eq2.2000} and  \eqref{eq2.20000}, we can conclude that  \eqref{eq2.18} holds for $i=0,\cdots,n_0-1$. For $i=1,\cdots,n_0$, define $\omega_i, v_i$ such that \eqref{eq2.19} holds. Define
\begin{equation}
	\omega=\sum_{i=1}^{n_{0}} \omega_i\delta_{x_i}, \ v=\sum_{i=1}^{n_{0}} v_i\delta_{x_i}.
\end{equation}
By the above construction, we know that the Weyl-Titchmarsh function of $H_-(\omega,v, a)$ coincides with $M_-(z)$.
Since the fraction expansion of the form \eqref{eq2.20} for $M_-(z)$ is unique, then $\omega|_{(-\infty,a)}$ and $v|_{(-\infty,a)}$ are uniquely determined.
\end{proof}

\section{Interior Inverse Problem I}
In this section we consider the interior inverse problem with the case that $\varphi_i(a)\neq0$ (see \eqref{eq2.6} for the definition of $\varphi_i(a)$) for any $i=1,\cdots,N$.
Since $\sigma(H)$ consists of finitely many eigenvalues, we suppose that
\begin{align}
	\sigma(H)=\{\lambda_i\}_{i=1}^N,\nonumber
\end{align}
where $N\leq 2n$, $\lambda_1<\lambda_2<\cdots<\lambda_N$.
We mention that $0\notin \sigma(H).$ 
Define
\begin{equation}
\varphi_i(x)=\frac{\phi_{+}(\lambda_i,x)}{\sqrt{\lambda_i\gamma_{\lambda_i}^2}}      \label{eq2.6}
\end{equation}
as the normalized eigenfunction associated with the eigenvalue $\lambda_i$.
 We consider the following interior inverse problem: For given $a \in \mathbb{R}$, reconstruct $\omega$ and $v$ from
$\{\lambda_i, \varphi_i(a) \}_{i=1}^N$.
The set $\{\lambda_i, \varphi_i(a) \}_{i=1}^N$ is called the interior spectral data of $H=H(\omega,v)$.  We mention that  $\omega$ and $v$ are uniquely determined by the spectral data  $\{\lambda_i, \lambda_i\gamma_{\lambda_i}^2\}_{i=1}^N$ (\cite[Theorem 4.3]{eck2}).

  Define
\begin{equation}
G(z,a)=\sum_{i=1}^N \frac{\lambda_i^2|\varphi_i(a)|^2}{\lambda_i-z}.  \label{G}
 \end{equation}%
 Obviously, $G(z,a)$ is a Herglotz-Nevanlinna function. Since $0\notin \sigma(H)$, {$G(z,a)$ has the following Taylor series at $z=0$,}
\begin{equation}\label{TaylorofGza}
G(z,a)=\sum_{i=1}^N \lambda_i |\varphi_i(a)|^2 + z \sum_{i=1}^N |\varphi_i(a)|^2 + z^2 \sum_{i=1}^N \frac{|\varphi_i(a)|^2}{\lambda_i}+O(z^3).
\end{equation}

Let us introduce some basic lemmas. Denote by
\begin{align}
	\alpha:=1-\sum_{i= 1}^N |\varphi_i(a)|^2,\
	\beta:=-\sum_{i=1}^N \lambda_i|\varphi_i(a)|^2.\nonumber
\end{align}

 \begin{lemma}  \label{lemma3.1}
 We have
 \begin{equation}
-\frac{1}{M_+(z,a)+M_-(z,a)}= \alpha z+ \beta + G(z,a) \label{eq3.2}
\end{equation}
with $\alpha\geq0.$
\end{lemma}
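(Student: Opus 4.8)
The plan is to compute the left-hand side by combining the two Weyl-Titchmarsh functions over a common expression, and then match the result against the explicit Herglotz-Nevanlinna representation on the right. First I would recall that $W(z)$ is the Wronskian $\phi_+(z,a)\phi_-'(z,a)-\phi_+'(z,a)\phi_-(z,a)$, which is independent of $x$, so it may be evaluated at $x=a$. Adding the definitions \eqref{eq212} and \eqref{M-za} gives
\begin{equation}
M_+(z,a)+M_-(z,a)=\frac{1}{z}\left(\frac{\phi_+'(z,a)}{\phi_+(z,a)}-\frac{\phi_-'(z,a)}{\phi_-(z,a)}\right)=\frac{\phi_+'(z,a)\phi_-(z,a)-\phi_+(z,a)\phi_-'(z,a)}{z\,\phi_+(z,a)\phi_-(z,a)}=\frac{-W(z)}{z\,\phi_+(z,a)\phi_-(z,a)}.\nonumber
\end{equation}
Hence the reciprocal with a minus sign is exactly the diagonal Green's function mentioned in the introduction,
\begin{equation}
-\frac{1}{M_+(z,a)+M_-(z,a)}=\frac{z\,\phi_+(z,a)\phi_-(z,a)}{W(z)}.\nonumber
\end{equation}

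Next I would identify this rational function as a Herglotz-Nevanlinna function and read off its pole expansion. The zeros of $W(z)$ are precisely the eigenvalues $\{\lambda_i\}_{i=1}^N$ (all real, by the discussion following \eqref{xishu}), and these are simple because $\dot W(\lambda_i)=-c_{\lambda_i}\gamma_{\lambda_i}^2\neq0$ by Lemma \ref{lemma2.1}. At each $\lambda_i$ the residue of $z\phi_+(z,a)\phi_-(z,a)/W(z)$ is $\lambda_i\phi_+(\lambda_i,a)\phi_-(\lambda_i,a)/\dot W(\lambda_i)$; using the linear dependence $\phi_-(\lambda_i,a)=c_{\lambda_i}\phi_+(\lambda_i,a)$ from \eqref{xishu} together with $\dot W(\lambda_i)=-c_{\lambda_i}\gamma_{\lambda_i}^2$ and the definition \eqref{eq2.6} of $\varphi_i$, this residue simplifies to $-\lambda_i|\phi_+(\lambda_i,a)|^2/\gamma_{\lambda_i}^2=-\lambda_i^2|\varphi_i(a)|^2$. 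A partial-fraction expansion of the rational function then reads
\begin{equation}
\frac{z\,\phi_+(z,a)\phi_-(z,a)}{W(z)}=Az+B+\sum_{i=1}^N\frac{-\lambda_i^2|\varphi_i(a)|^2}{z-\lambda_i}=Az+B+\sum_{i=1}^N\frac{\lambda_i^2|\varphi_i(a)|^2}{\lambda_i-z}=Az+B+G(z,a),\nonumber
\end{equation}
where the affine part $Az+B$ accounts for possible growth at infinity; note that $\deg\phi_\pm\le n$ and $\deg W\le 2n$, so the numerator degree exceeds the denominator degree by at most one, justifying that the polynomial part is at most linear.

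It remains to evaluate the constants $A$ and $B$ and to prove $A\ge0$, which is the crux of the argument. I would pin down $A$ and $B$ by comparing Taylor coefficients at $z=0$. From the normalizations $\phi_\pm(0,a)=e^{\mp a/2}$ in \eqref{phi_+(0,a)} and the residue computation \eqref{M+0aM-0a}, one obtains the value and first derivative of the Green's function at $z=0$; matching these against $Az+B+G(z,a)$ using the Taylor series \eqref{TaylorofGza} forces $B=\beta=-\sum_i\lambda_i|\varphi_i(a)|^2$ and $A=\alpha=1-\sum_i|\varphi_i(a)|^2$. For the sign, since the left-hand side $-1/(M_++M_-)$ is (minus the reciprocal of a sum of two Herglotz-Nevanlinna functions, hence) itself Herglotz-Nevanlinna, its leading coefficient at $+\infty$ must be nonnegative; equivalently $\alpha=\lim_{z\to+\infty}G$-removed slope $\ge0$. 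The main obstacle I anticipate is making the constant-matching and the sign argument fully rigorous: one must handle the three degree cases $c=\deg(\mathrm{den})-\deg(\mathrm{num})\in\{-1,0,1\}$ flagged in the introduction (in the case $c=1$ the slope $\alpha$ vanishes, while $c=0$ gives $\alpha>0$), and confirm that the Herglotz-Nevanlinna property of the combination indeed yields $\alpha\ge0$ rather than just $\alpha\in\mathbb{R}$. I would resolve the sign cleanly by invoking that $M_+$ and $M_-$ are Herglotz-Nevanlinna (as noted after \eqref{M-za}), so $M_++M_-$ maps the upper half-plane to itself, and therefore so does $-1/(M_++M_-)$, whence its asymptotic slope $\alpha$ is nonnegative.
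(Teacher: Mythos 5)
Your proposal is correct and takes essentially the same route as the paper's proof: the identity $-1/(M_+(z,a)+M_-(z,a))=z\phi_+(z,a)\phi_-(z,a)/W(z)$, the residue computation $-\lambda_i^2|\varphi_i(a)|^2$ at the eigenvalues via Lemma \ref{lemma2.1} and \eqref{xishu}, the Herglotz--Nevanlinna representation with a nonnegative linear coefficient, and Taylor matching at $z=0$ (using $W(0)=1$ and $\phi_\pm(0,a)=e^{\mp a/2}$) to identify the constants as $\alpha$ and $\beta$. The only cosmetic difference is your degree-counting justification of the affine part, which is unnecessary (and not fully airtight as stated), since the paper gets the form $tz+s+G(z,a)$ with $t\ge 0$ directly from the Herglotz--Nevanlinna property of $-1/(M_++M_-)$ --- exactly the fallback you invoke for the sign anyway.
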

\begin{proof}
 By the definitions of $M_+(z,a)$ and $M_-(z,a)$, we have
\begin{align}
-\frac{1}{M_+(z,a)+M_-(z,a)}=\frac{z\phi_+(z,a)\phi_-(z,a)}{W(z)}.    \label{eq3.4}
\end{align}
Combining Lemma 2.1 with \eqref{xishu} and \eqref{eq2.6}, we can obtain
\begin{align}\label{Resz=lambdai}
{\rm{Res}}_{z=\lambda_i} \frac{z\phi_+(z,a)\phi_-(z,a)}{W(z)}
=\frac{\lambda_i\phi_+(\lambda_i,a)\phi_-(\lambda_i,a)}{\dot{W}(\lambda_i)}=-\lambda_i^2|\varphi_i(a)|^2.
\end{align}
{Since $M_+(z,a), M_-(z,a)$ are Herglotz-Nevanlinna functions,
$$-\frac{1}{M_+(z,a)+M_-(z,a)}$$
is also a Herglotz-Nevanlinna function.}
Combining with \eqref{Resz=lambdai}, there exist $t\geq0$, $s\in\mathbb{R}$ such that
 \begin{equation}
 -\frac{1}{M_+(z,a)+M_-(z,a)}= t z+ s + G(z,a).   \label{A}
\end{equation}
Let $z\to0$ in \eqref{A}, we obtain $s=\beta$.
From \eqref{w(0)} and \eqref{phi_+(0,a)}, letting $z\rightarrow0$, one can obtain
\begin{equation}\label{zpp}
\frac{z\phi_+(z,a)\phi_-(z,a)}{W(z)}=z+O(z^2).
\end{equation}
By \eqref{TaylorofGza}, \eqref{eq3.4}, \eqref{A} and  \eqref{zpp}, we conclude that
\begin{equation}
t=\alpha\geq 0. \notag
 \end{equation}
\end{proof}

\begin{lemma} \label{lemma3.2}
	We have
	\begin{equation}
		\alpha=0   \nonumber
	\end{equation}
	if and only if $a=x_{i_0}$ for some $i_0\in\{1,\cdots,n\}$. Furthermore, we have
	\begin{align}
		\beta=0
	\end{align}
if and only if $v_{i_0}>0.$
	
\end{lemma}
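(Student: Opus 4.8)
The plan is to identify $\alpha$ and $\beta$ with the coefficients of the linear and constant terms of $-1/(M_+(z,a)+M_-(z,a))$ in its expansion as $|z|\to\infty$, and then to evaluate these coefficients from the continued fraction expansions \eqref{eq2.21} and \eqref{eq2.20}. Since $G(z,a)$ from \eqref{G} satisfies $G(z,a)=O(1/z)$, formula \eqref{eq3.2} gives
\[
-\frac{1}{M_+(z,a)+M_-(z,a)}=\alpha z+\beta+O(1/z),\qquad |z|\to\infty,
\]
so that $\alpha=\lim_{z\to\infty}\big(-1\big)\big/\big(z(M_+(z,a)+M_-(z,a))\big)$ and, once $\alpha=0$ is known, $\beta=\lim_{z\to\infty}\big(-1/(M_+(z,a)+M_-(z,a))\big)$. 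Everything therefore reduces to the leading behaviour of $M_\pm(z,a)$ at infinity.

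Write $a\in(x_{n_0},x_{n_0+1}]$. The key structural fact, coming from the standing assumption \eqref{omegai}, is that each polynomial $m_i(z)$ in \eqref{eq2.19} is a nonzero constant (when $v_i=0$, which forces $\omega_i\neq0$) or has degree one (when $v_i>0$); in particular no $m_i$ vanishes identically. Reading \eqref{eq2.21} and \eqref{eq2.20} from the innermost level outward, a short induction on the depth then shows that every finite tail of these continued fractions either tends to a nonzero constant or grows linearly in $z$, hence stays bounded away from $0$. Consequently, whenever the outermost coefficient $-Lz$ of such a fraction has $L>0$, the whole fraction behaves like $-1/(Lz)$ as $z\to\infty$. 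Applied to \eqref{eq2.20}, whose outer coefficient is $-\tilde l(n_0)z$ with $\tilde l(n_0)=-2\tanh((x_{n_0}-a)/2)>0$ by \eqref{eq2.200} (and $\tilde l(0)=2$ when $n_0=0$), this yields $M_-(z,a)=-1/(\tilde l(n_0)z)+o(1/z)\to0$, consistent with \eqref{limM-zaoverz}.

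For $M_+$ the outer coefficient in \eqref{eq2.21} is $-l(n_0)z$ with $l(n_0)=2\tanh((x_{n_0+1}-a)/2)$. If $a\neq x_{n_0+1}$, then $a$ is not a mass point, $l(n_0)>0$, and the same estimate gives $M_+(z,a)=-1/(l(n_0)z)+o(1/z)$; hence $z(M_++M_-)\to-(1/l(n_0)+1/\tilde l(n_0))\neq0$ and
\[
\alpha=\frac{l(n_0)\,\tilde l(n_0)}{l(n_0)+\tilde l(n_0)}>0 .
\]
If instead $a=x_{n_0+1}$ — the only way a point of $(x_{n_0},x_{n_0+1}]$ can coincide with a mass point, so that $i_0=n_0+1$ — then $l(n_0)=0$ and \eqref{eq2.21} reduces to \eqref{eq2.23}, i.e.\ $M_+(z,a)=m_{n_0+1}(z)+o(1)$ with $m_{n_0+1}(z)=zv_{n_0+1}+\omega_{n_0+1}$ since $\cosh^2(0)=1$. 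As $M_-(z,a)\to0$, we obtain $M_++M_-\to\infty$ if $v_{n_0+1}>0$ and $M_++M_-\to\omega_{n_0+1}\neq0$ if $v_{n_0+1}=0$; in either case $z(M_++M_-)\to\infty$, so $\alpha=0$. This proves $\alpha=0\iff a=x_{i_0}$. Working now in this case, where $\alpha=0$, we have $\beta=\lim_{z\to\infty}\big(-1/(M_++M_-)\big)$, which equals $0$ when $v_{i_0}>0$ and $-1/\omega_{i_0}\neq0$ when $v_{i_0}=0$; hence $\beta=0\iff v_{i_0}>0$.

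The main obstacle is making the continued-fraction asymptotics rigorous: one must verify by induction on the depth of \eqref{eq2.21} and \eqref{eq2.20} that no tail degenerates to $0$ (this is precisely where \eqref{omegai} enters, through the non-vanishing of the $m_i$), and one must treat the terminating boundary cases $n_0=0$ and $n_0+1=n$ separately, checking that the relevant $l_i$ stay positive there. Once this asymptotic dictionary is in place, the sign bookkeeping distinguishing the three regimes $\alpha>0$, $\alpha=0\neq\beta$, and $\alpha=\beta=0$ is routine.
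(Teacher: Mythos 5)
Your proposal is correct and follows essentially the same route as the paper: both read the large-$z$ behaviour of $M_\pm(z,a)$ off the continued fraction expansions \eqref{eq2.21}, \eqref{eq2.23} and \eqref{eq2.20}, and translate it into statements about $\alpha$ and $\beta$ via \eqref{eq3.2} together with $M_-(z,a)\to0$. The difference is only presentational: the paper compresses your case analysis into the single criterion that $1/M_+(z,a)$ has a finite limit $C$ as $\abs{z}\to+\infty$ (with $C=0$ detecting $v_{i_0}>0$), whereas you make the continued-fraction induction explicit and compute the exact value of $\alpha$ when $a$ is not a mass point.
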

\begin{proof}
From \eqref{eq2.19},  \eqref{eq2.21}  and  \eqref{eq2.23}, one has that
$a=x_{i_0}$ for some $i_0\in\{1,\cdots,n\}$ if and only if
there exists $C\in \mathbb{R}$ with
\begin{align} \label{aaa}
	\lim_{\abs{z}\to+\infty}\frac{1}{M_+(z,a)}=C.
\end{align}
 By  \eqref{limM-zaoverz},  we have that \eqref{aaa} holds if and only if
\begin{equation}
\lim_{|z|\rightarrow+\infty}-\frac{1}{M_+(z,a)+M_-(z,a)}=-C,   \notag
\end{equation}
{which is equivalent to $\alpha=0$ by $\eqref{eq3.2}$}.  
Furthermore, by \eqref{eq2.19} and \eqref{eq2.23}, $v_{i_0}>0$ if and only if $C=0$,
{which is equivalent to $\alpha=\beta=0$ by $\eqref{eq3.2}$}.  
\end{proof}

Suppose that $A_1(z),A_2(z),B_1(z),B_2(z)$ are real polynomials such that
\begin{align}
	 F_1(z)=\frac{ A_{1}(z)}{B_{1}(z)},\  F_2(z)=\frac{ A_{2}(z)}{B_{2}(z)}\label{F1zF2z}
\end{align}
are Herglotz-Nevanlinna functions. Assume that $A_i(z)$  and $B_i(z)$ have no common zeros, $i=1,2$. Define
\begin{equation}
	F(z):=F_1(z)+ F_2(z).   \label{definitionFz}
\end{equation}
Clearly, for any adjacent zeros of $F(z)$ (denoted by $\nu_j,\nu_{j+1}$), there exists exactly one pole $\mu$ of $F(z)$ with $ \nu_j<\mu<\nu_{j+1}$.

We need the following lemma.
\begin{lemma} \label{lemma3.3}
Assume that $\mu$ is not a common pole of $F_1(z)$ and $F_2(z)$. If
\begin{align}
	B_{2}(\nu_{j})B_{2}(\nu_{j+1})>0, \label{tonghao1}
\end{align}
then $\mu$ is a pole of $F_{1}(z)$. Otherwise, $\mu$ is a pole of $F_{2}(z)$.
\end{lemma}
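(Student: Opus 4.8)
The plan is to exploit the fact that a rational Herglotz--Nevanlinna function has only real, simple poles at which the residue is strictly negative, and then to run a sign/parity count for the denominator $B_2(z)$ on the interval $(\nu_j,\nu_{j+1})$. The whole argument reduces to comparing two counts: the number of poles of $F_2$ inside $(\nu_j,\nu_{j+1})$ read off from the sign of $B_2$ at the endpoints, and the same number read off from the interlacing of zeros and poles of $F$.

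First I would record the structural consequences of $F_2(z)=A_2(z)/B_2(z)$ being Herglotz--Nevanlinna in lowest terms. Every pole of $F_2$ is real and simple, so (since $A_2,B_2$ share no zeros) every zero of $B_2$ is real and simple. Hence $B_2$ changes sign at each of its zeros and nowhere else, and the product $B_2(\nu_j)B_2(\nu_{j+1})$ is positive (respectively negative) precisely when the number of zeros of $B_2$ in the open interval $(\nu_j,\nu_{j+1})$ is even (respectively odd). To make this count meaningful I must first check that the endpoints are not zeros of $B_2$: since $\nu_j,\nu_{j+1}$ are zeros of $F$ they are not poles of $F$, hence not poles of $F_2$, so $B_2(\nu_j)\neq 0\neq B_2(\nu_{j+1})$; in particular the case $B_2(\nu_j)B_2(\nu_{j+1})=0$ never occurs, which is what justifies the dichotomy in the statement.

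Next I would show that the poles of $F_2$ lying in $(\nu_j,\nu_{j+1})$ are exactly the zeros of $B_2$ there, and that each is a genuine pole of $F$. The key point is that no pole can be lost through cancellation: the residue of a Herglotz--Nevanlinna function at any real pole is strictly negative, so at a common pole the residues of $F_1$ and $F_2$ add up to a strictly negative number, while at a pole of $F_2$ alone the residue of $F$ equals that of $F_2$; in either case such a point is a genuine pole of $F$. By the interlacing statement preceding the lemma, $(\nu_j,\nu_{j+1})$ contains exactly one pole of $F$, namely $\mu$. Consequently the number of poles of $F_2$ in $(\nu_j,\nu_{j+1})$ is either $0$ or $1$, and it equals $1$ exactly when $\mu$ itself is a pole of $F_2$ (the unique pole of $F$ being $\mu$).

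Combining the two counts finishes the proof. If $B_2(\nu_j)B_2(\nu_{j+1})>0$, the parity count forces an even number of poles of $F_2$ in the interval, hence $0$; then $\mu$ is not a pole of $F_2$, and since $\mu$ is a pole of $F$ that is not a common pole, it must be a pole of $F_1$. If instead the product is negative, the count is odd, hence $1$, so $\mu$ is a pole of $F_2$. I expect the only delicate point to be the bookkeeping that ties the sign of $B_2$ at the endpoints to the parity of its interior zeros, run together with the no-cancellation fact for residues; once the Herglotz structure (real simple poles with negative residues, so that zeros of $B_2$ are real and simple and poles of $F_2$ survive in $F$) is in hand, everything reduces to a direct count.
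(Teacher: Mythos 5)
Your proof is correct, but it reaches the conclusion by a genuinely different mechanism than the paper. The two arguments share one ingredient: the ``no cancellation'' principle that, because a rational Herglotz--Nevanlinna function has only real simple poles with strictly negative residues, poles of $F_1$ and $F_2$ cannot annihilate each other in the sum, so every pole of $F_1$ or $F_2$ is a pole of $F$. The paper uses this (in contrapositive form) to show that $B_1$ and $B_2$ do not vanish on $[\nu_j,\nu_{j+1}]\setminus\{\mu\}$; you use it to rule out zeros of $B_2$ at the endpoints and to confine any pole of $F_2$ in the interval to the single point $\mu$. After that the routes diverge. The paper writes $F(z)=f(z)/\bigl(B_1(z)B_2(z)\bigr)$ with $f=A_1B_2+A_2B_1$, shows that $\nu_j,\nu_{j+1}$ are adjacent simple zeros of $f$, so that $\dot f(\nu_j)\dot f(\nu_{j+1})<0$, and combines this with the positivity $\dot F>0$ at zeros of $F$ to obtain the sign identity $B_1(\nu_j)B_2(\nu_j)B_1(\nu_{j+1})B_2(\nu_{j+1})<0$, from which the hypothesis on $B_2$ forces $B_1$ (respectively $B_2$) to change sign across the interval. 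You instead exploit that the Herglotz property of $F_2$ makes all zeros of $B_2$ real and simple, so that the sign of $B_2(\nu_j)B_2(\nu_{j+1})$ records the parity of the number of zeros of $B_2$ in $(\nu_j,\nu_{j+1})$; since that number is at most one (each such zero is a pole of $F$ in the interval, and $\mu$ is the only one), parity pins it down exactly and the dichotomy follows. Your route is more elementary: it needs neither the auxiliary polynomial $f$ nor the derivative analysis of $F$ at its zeros, only residue negativity, simplicity of the poles of $F_2$, and the interlacing statement preceding the lemma. One presentational point: you invoke ``not a pole of $F$, hence not a pole of $F_2$'' before you justify the no-cancellation principle in the following paragraph; reordering those two steps would make the logic strictly linear, but there is no gap.
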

\begin{proof} Note that for any $\lambda\in [\nu_j, \nu_{j+1}] \setminus \{\mu\}$, \begin{align}
	\ B_{i}(\lambda)\neq 0, \quad  i=1,2. \label{B1}
	\end{align}
Otherwise, for example, assume $B_1(\lambda)=0$. Since polynomials $A_{1}(z)$  and $B_{1}(z)$ have no common zeros, one has $A_1(\lambda)\neq 0$. Because $F_1(z)$ is a Herglotz-Nevanlinna function, one has
\begin{align}
	{\rm{Res}}_{z=\lambda}F_1(z)<0.\nonumber
\end{align}
Then by the fact that $F(\lambda)\in \mathbb{R}$ we obtain that $B_2(\lambda)=0$ and
\begin{align}
	{\rm{Res}}_{z=\lambda}F_2(z)>0,\nonumber
\end{align}
this is {in contradiction with} that $F_2(z)$ is a Herglotz-Nevanlinna function.

From \eqref{definitionFz}, we have that
\begin{equation}
F(z)=\frac{f(z)}{B_{1}(z)B_{2}(z)},        \label{Gz}
\end{equation}
where
\begin{equation}
f(z)= A_1(z)B_2(z)+A_2(z)B_1(z).\nonumber
\end{equation}
 Since $F(z)$ is a Herglotz-Nevanlinna function, its zeros are simple. By \eqref{B1} and \eqref{Gz},  we know that $\nu_j$, $\nu_{j+1}$ are simple zeros of $f(z)$. For  any $\lambda\in (\nu_j, \nu_{j+1}) \setminus \{\mu\}$,
combining \eqref{B1} with the fact that $F(\lambda)\ne 0$, we can conclude that $f(\lambda)\ne 0$. By the assumption, $\mu$ is not a common pole of $F_1(z)$ and $F_2(z)$. Then by \eqref{F1zF2z}, $\mu$ is a simple zero of $B_{1}(z)B_{2}(z)$. Since $\mu$ is a pole of $F(z)$, by \eqref{Gz}, one obtains that $f(\mu)\neq0$. Hence
 one can conclude that $\nu_j$ and $\nu_{j+1}$ are adjacent simple zeros of $f(z)$. Then one has that
 \begin{equation}\dot{f}(\nu_{j})\dot{f}(\nu_{j+1})<0.  \label{dotg}
 \end{equation}

 Combining  \eqref{Gz} with the fact that $F(z)$ is a Herglotz-Nevanlinna function, we obtain
\begin{align}
\dot{F}(\nu_j)&=\frac{\dot{f}(\nu_j)
	}{B_{1}(\nu_j)B_{2}(\nu_j)}>0, \label{lambdaj}\\ \dot{F}(\nu_{j+1})&=\frac{\dot{f}(\nu_{j+1})}{B_{1}(\nu_{j+1})B_{2}(\nu_{j+1})}>0.  \label{lambdaj+1}
\end{align}
 Then from  \eqref{dotg}, \eqref{lambdaj} and \eqref{lambdaj+1},  one has
\begin{align}
B_{1}(\nu_j)B_{2}(\nu_j)B_{1}(\nu_{j+1})B_{2}(\nu_{j+1})<0.\nonumber
\end{align}
Applying \eqref{tonghao1}, we have that $B_{1}(\nu_j)B_{1}(\nu_{j+1})<0$. Therefore, $B_{1}(z)$ has a zero in interval $(\nu_{j}, \nu_{j+1})$, which implies that $\mu$ is a pole of $F_1(z)$.

{Otherwise, if $B_{2}(\nu_{j})B_{2}(\nu_{j+1})\leq0$, by \eqref{B1}, one has}
\begin{align}
	B_{2}(\nu_{j})B_{2}(\nu_{j+1})<0.\nonumber
\end{align}
Then $B_{2}(z)$ has a zero in the interval $(\nu_{j}, \nu_{j+1})$. This implies that $\mu$ is a pole of $F_{2}(z)$.
\end{proof}

Now we are going to introduce some inverse interior spectral results under the condition that
$\varphi_i(a)\neq0$ for any $i=1,\cdots,N$.
\begin{theorem}\label{theorem3.3}
For given $a$, suppose that $\{\lambda_i, \varphi_i(a) \}_{i=1}^N$ is the interior spectral data for some $\omega,v$. If
\begin{equation}
\alpha=\beta=0 \nonumber
\end{equation}
and
\begin{equation}\label{phiinotequal0}
\varphi_i(a)\neq0,\ i=1,\cdots, N,
\end{equation}
then $\{\lambda_i, \varphi_i(a) \}_{i=1}^N$
uniquely determines $\omega,v$.
\end{theorem}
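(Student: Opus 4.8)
The plan is to reconstruct the two Weyl--Titchmarsh functions $M_+(z,a)$ and $M_-(z,a)$ from the data and then invoke Lemmas \ref{lemma2.6} and \ref{lemma2.7} to recover $\omega,v$ uniquely on $[a,+\infty)$ and $(-\infty,a)$. Since $\alpha=\beta=0$, Lemma \ref{lemma3.1} reduces to
\begin{equation}
M_+(z,a)+M_-(z,a)=-\frac{1}{G(z,a)}=:F(z),\nonumber
\end{equation}
and the right-hand side is built entirely from $\{\lambda_i,\varphi_i(a)\}_{i=1}^N$. As $G$ is Herglotz--Nevanlinna with poles exactly at the $\lambda_i$ and vanishes at infinity, its zeros interlace strictly, so $F$ has exactly one pole in each gap $(\lambda_j,\lambda_{j+1})$ and none outside $[\lambda_1,\lambda_N]$. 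Moreover $G(0,a)=-\beta=0$ together with $\alpha=0$ (i.e. $\sum_i|\varphi_i(a)|^2=1$, which also forces the $\lambda_i$ to take both signs) shows that $z=0$ is the unique pole of $F$ in the gap straddling the origin, and that it is the common pole of $M_\pm$ with ${\rm Res}_{z=0}M_\pm(z,a)=-\tfrac12$ by \eqref{M+0aM-0a}. The only remaining ambiguity is, for each of the other $N-2$ poles $\mu_j$, whether $\mu_j$ belongs to $M_+$ or to $M_-$.

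To resolve this I would apply Lemma \ref{lemma3.3} with $F_1=M_-$, $F_2=M_+$ and $\nu_j=\lambda_j$. First, no $\mu_j$ off the origin gap is a common pole: a common pole would be a common zero of $\phi_+(\cdot,a)$ and $\phi_-(\cdot,a)$, forcing $W=0$ and hence $\mu_j\in\sigma(H)$, which is impossible. The crux is then to read the sign of the reduced denominator $B_2$ of $M_+$ at consecutive eigenvalues purely from the data. Since $M_+(z,a)=\phi_+'(z,a)/(z\phi_+(z,a))$ and its reduced denominator differs from $z\phi_+(z,a)$ only by a nonvanishing constant (there is no cancellation at $z=0$, as $\phi_+'(0,a)=-\tfrac12 e^{-a/2}\ne0$), and since $\phi_+(\lambda_j,a)=\sqrt{\lambda_j\gamma_{\lambda_j}^2}\,\varphi_j(a)$ with a positive normalizing factor, one obtains
\begin{equation}
\sgn\big(B_2(\lambda_j)B_2(\lambda_{j+1})\big)=\sgn(\lambda_j\lambda_{j+1})\,\sgn\big(\varphi_j(a)\varphi_{j+1}(a)\big).\nonumber
\end{equation}
For every gap not containing the origin the endpoints $\lambda_j,\lambda_{j+1}$ share the same sign, so $\lambda_j\lambda_{j+1}>0$ and the sign is simply $\sgn(\varphi_j(a)\varphi_{j+1}(a))$; here $\varphi_j(a)\ne0$ by \eqref{phiinotequal0}. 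Lemma \ref{lemma3.3} then assigns $\mu_j$ to $M_+$ precisely when $\varphi_j(a)\varphi_{j+1}(a)<0$ (a sign change) and to $M_-$ when $\varphi_j(a)\varphi_{j+1}(a)>0$.

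Once the pole sets of $M_+$ and $M_-$ are identified, I would reconstruct $M_-$ as its partial-fraction expansion: it is rational, Herglotz--Nevanlinna, vanishes at infinity by \eqref{limM-zaoverz}, and has poles exactly at $0$ and at the $\mu_j$ assigned to it, with residue $-\tfrac12$ at $0$ and residue ${\rm Res}_{z=\mu_j}F$ at each such $\mu_j$ (as $M_+$ is regular there). Hence $M_-$ is fully determined by $F$ and the assignment, and then $M_+=F-M_-$ is determined as well. Applying Lemma \ref{lemma2.7} to $M_-$ yields $\omega|_{(-\infty,a)},v|_{(-\infty,a)}$ uniquely, and Lemma \ref{lemma2.6} applied to $M_+$ (which satisfies ${\rm Res}_{z=0}M_+=-\tfrac12$) yields $\omega|_{[a,+\infty)},v|_{[a,+\infty)}$ uniquely; together these determine $\omega,v$.

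I expect the main obstacle to be exactly the sign bookkeeping of the second paragraph. Because $M_+$ and $M_-$ are not individually known at this stage, the decision of where each pole lives must be extracted solely from the observable signs $\sgn(\varphi_j(a)\varphi_{j+1}(a))$; one must verify carefully that no spurious cancellation in the reduced denominators corrupts these signs, that the origin gap is treated separately as a common pole, and that the hypothesis $\alpha=\beta=0$ is used precisely to place $z=0$ strictly inside a gap with residue $-1$ splitting evenly between $M_+$ and $M_-$.
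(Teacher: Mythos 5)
Your proposal follows essentially the same route as the paper's own proof: reduce to $M_+ + M_- = -1/G$ via Lemma \ref{lemma3.1}, identify $z=0$ as the unique common pole with residues $-\tfrac{1}{2}$ by \eqref{M+0aM-0a}, assign each of the remaining $N-2$ poles to $M_+$ or $M_-$ via Lemma \ref{lemma3.3} according to $\sgn\big(\varphi_j(a)\varphi_{j+1}(a)\big)$, and recover $\omega,v$ from Lemmas \ref{lemma2.6} and \ref{lemma2.7}; your explicit check that $z\phi_+(z,a)$ and $\phi_+'(z,a)$ share no zeros (so the reduced denominator is honest) is a point the paper leaves implicit. The only omission is the paper's closing consistency step (using Remark \ref{remark5.11}) verifying that the reconstructed $\omega,v$ reproduces the signed data $\{\lambda_i,\varphi_i(a)\}_{i=1}^N$, but that step is not needed for the uniqueness assertion itself, so your argument is correct as it stands.
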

\begin{proof}
Since $\alpha=\beta=0,$ by \eqref{G} and Lemma \ref{lemma3.1}, we have
\begin{equation}\label{sumi1N}
-\frac{1}{M_+(z,a)+M_-(z,a)}=\sum_{i=1}^N \frac{\lambda_i^2|\varphi_i(a)|^2}{\lambda_i-z}.
\end{equation}
Namely, $-\frac{1}{M_+(z,a)+M_-(z,a)}$ is completely determined by the interior spectral data $\{\lambda_i, \varphi_i(a) \}_{i=1}^N$.
Since $M_+(z,a)+M_-(z,a)$ is a Herglotz-Nevanlinna function, by {\eqref{phiinotequal0}} and \eqref{sumi1N}, there exist $\gamma>0, \zeta, \beta_i,\mu_i, i=1,\cdots,N-1$, (~determined by $\{\lambda_i, \varphi_i(a) \}_{i=1}^N$ ) so that
\begin{equation}\label{gammazeta}
M_+(z,a)+M_-(z,a)=\gamma z+\zeta+\sum_{i=1}^{N-1}\frac{\beta_i}{\mu_i-z}, \ {\beta_i>0},
\end{equation}
where $\{\mu_i\}_{i=1}^{N-1}$ are the poles of ${M_+(z,a)}+{M_-(z,a)}$. {We emphasize that ${M_+(z,a)}+{M_-(z,a)}$ has one more zero than poles.}

By \eqref{M+0aM-0a}, we have $\mu_{j_0}=0$ for some {$j_0\in \{1,\cdots, N-1\}$}.
Clearly, $\{\mu_i\}_{i=1}^{N-1}$ and $\{\lambda_i\}_{i=1}^{N}$ are interlaced:
\begin{equation}
\lambda_1<\mu_1<\lambda_2<\cdots<\lambda_{j_0}<\mu_{j_0}=0<\lambda_{j_0+1}<\cdots<\lambda_N. \label{lambda1}
\end{equation}

{Now, we are going to determine the poles of ${M_+(z,a)}$ and $M_-(z,a)$, respectively}. {Then by \eqref{gammazeta} we can completely determine $M_+(z,a)$ and $M_-(z,a).$} Note that the only common pole of $M_+(z,a)$ and $M_-(z,a)$ is $z=0$. Indeed,
if $\mu_{j}\neq0$ is a pole of $M_+(z,a)$ and $M_-(z,a)$. Namely, $$\phi_{+}(\mu_{j},a)=\phi_-(\mu_{j},a)=0.$$
Then by \eqref{Wzdefine} we have that
\begin{equation}
	W(\mu_{j})=\phi_{+}(\mu_{j},a)\phi_-'(\mu_{j},a)-\phi_{+}'(\mu_{j},a)\phi_-(\mu_{j},a)=0.\nonumber
\end{equation}
This implies that $\mu_j$ is an eigenvalue, which leads to {a} contradiction.

Let
\begin{align}
A_1(z)&=-\phi_-'(z,a), \ B_1(z)=z\phi_-(z,a), \nonumber\\
A_2(z)&=\phi_+'(z,a), \ B_2(z)=z\phi_+(z,a),  \nonumber
\end{align}
in Lemma \ref{lemma3.3}. Then we have
\begin{equation}
F_1(z)=M_-(z,a), F_2(z)=M_+(z,a).
\end{equation}
For any {$j\neq j_0$} with
\begin{align}
	\varphi_{j}(a) \varphi_{j+1}(a)<0,\nonumber
\end{align}
from \eqref{eq2.6} and \eqref{lambda1}, one has
\begin{align}
B_2(\lambda_j)B_2(\lambda_{j+1})= \lambda_j\lambda_{j+1}\phi_+(\lambda_{j},a)\phi_+(\lambda_{j+1},a)<0.\nonumber
\end{align}
By Lemma \ref{lemma3.3},  we obtain that $\mu_{j}$ is a pole of $M_+(z,a)$. Similarly, for any {$j\neq j_0$} with $\varphi_{j}(a) \varphi_{j+1}(a)>0,$
$\mu_{j}$ is a pole of $M_-(z,a)$.
 Hence, we can completely determine, by the signs of $\varphi_i(a), i=1,\cdots,N$,  all poles of $M_+(z,a)$ and  $M_-(z,a) $, respectively. Denote the poles of $M_+(z,a)$ and $M_-(z,a)$ by $\{0\}\cup\{u_{i}\}_{i\in I_1}$ and $\{0\}\cup\{u_{i}\}_{i\in I_2}$, respectively. Firstly, {by \eqref{M+0aM-0a}} and \eqref{limM-zaoverz}, one has
\begin{align}
	M_-(z,a)=-\frac{1}{2z}+\sum_{i\in I_2}\frac{\beta_i}{\mu_i-z}.\nonumber
\end{align}
Using \eqref{M+0aM-0a} and \eqref{gammazeta} we obtain
\begin{align}
M_+(z,a)=\gamma z+ \zeta-\frac{1}{2z}+\sum_{i\in {I_1}} \frac{\beta_i}{\mu_i-z}.\nonumber
\end{align}
By {Lemmas \ref{lemma2.6}} and \ref{lemma2.7}, $M_+(z,a)$ {and} $M_-(z,a)$ uniquely determine $\omega|_{[a,+\infty)}$, $v|_{[a,+\infty)}$ and $\omega|_{(-\infty,a)}, v|_{(-\infty,a)}$, respectively. Hence $\omega$ and $v$ are uniquely determined.

Now we show that the $\omega, v$ determined above has the interior spectral data $\{\lambda_i, \varphi_i(a) \}_{i=1}^N$. Assume that $\{\hat{\lambda}_i, \hat{\varphi}_i(a) \}_{i=1}^N$ is the interior spectral data for the determined $\omega, v$. 
{Firstly, $M_{\pm}(z,a)$ are the Weyl-Titchmarsh functions of $H_{\pm}(\omega,v,a)$, then by Lemma \ref{lemma3.1} we have
\begin{equation}
-\frac{1}{M_+(z,a)+M_-(z,a)}=\sum_{i=1}^N \frac{\hat\lambda_i^2|\hat\varphi_i(a)|^2}{\hat\lambda_i-z}. \nonumber
\end{equation}
Combining with \eqref{sumi1N}, for any $i=1,\cdots,N$,   one has
\begin{equation} \label{f}
\hat{\lambda}_i=\lambda_i, |\hat{\varphi}_i(a)|=|\varphi_i(a)|.  
\end{equation}
Therefore, it suffices to} show that for any $i=1,\cdots, N$, we have
\begin{align}\label{sgnequal}
	{\rm{sgn}}(\hat{\varphi}_i(a))={\rm{sgn}}(\varphi_i(a)).
\end{align} 
{By \eqref{lambda1} and Remark \ref{remark5.11}, one has \eqref{sgnequal} {holds} for $i=j_0, j_0+1$.
For any $j\ne j_0$, we know that $\mu_{j}$ is a pole of $M_{\pm}(z,a)$ if and only if
 	 $$\pm\hat{\varphi}_{j}(a)\hat{\varphi}_{j+1}(a)<0.	$$
Hence, the signs of $\hat{\varphi}_{j}(a)$ for $j\neq j_0,j_0+1,$ are uniquely determined by ${\rm{sgn}}(\hat{\varphi}_{j_0}(a))$ and ${\rm{sgn}}(\hat{\varphi}_{j_0+1}(a))$.  Therefore, we can obtain that \eqref{sgnequal} {holds} for any ${i}=1,\cdots,N$.
Then $\{\lambda_i, \varphi_i( a) \}_{i=1}^N$ is the interior spectral data of the determined $\omega, v$.}
\end{proof}

\begin{theorem} \label{aaaa}
 For given $a$, suppose that $\{\lambda_i, \varphi_i(a) \}_{i=1}^N$ is the interior spectral data for some $\omega,v$. Assume that
\begin{equation}
\alpha=0, \ \beta\neq0      \nonumber
\end{equation}
and
\begin{equation}
\varphi_i(a)\neq0 , i=1,\cdots, N.   \label{varphii}
\end{equation}
${\rm{(i)}}$ If we have $\lambda_1>0$ or $\lambda_N<0$,
 then $\{\lambda_i, \varphi_i(a) \}_{i=1}^N$ uniquely determine{s} $\omega,v$.\\
${\rm{(ii)}}$ If for some {$j_0\in \{1,\cdots, N-1\}$} we have
\begin{equation}
\lambda_{j_0}<0<\lambda_{j_0+1},   \notag
\end{equation}
 then  two $\omega,v$'s																																																																																																																																				    yield the given interior spectral data $\{\lambda_i, \varphi_i(a) \}_{i=1}^N$.
\end{theorem}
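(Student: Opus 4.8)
The plan is to reduce everything to the pole structure of $M_+(z,a)+M_-(z,a)$ and to split its poles by Lemma \ref{lemma3.3}, exactly as in Theorem \ref{theorem3.3}; the only new feature is that $\beta\neq0$ produces one extra pole, and the two cases are distinguished by whether this extra pole coincides with the common pole at the origin. Since $\alpha=0$, Lemma \ref{lemma3.1} gives
\[
-\frac{1}{M_+(z,a)+M_-(z,a)}=\beta+G(z,a)=\beta+\sum_{i=1}^N\frac{\lambda_i^2|\varphi_i(a)|^2}{\lambda_i-z},
\]
so $M_+(z,a)+M_-(z,a)$ is determined by the data. Its zeros are the $\lambda_i$, and its poles are the zeros of $\beta+G(\cdot,a)$. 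Using \eqref{varphii}, $G(\cdot,a)$ is Herglotz--Nevanlinna with poles exactly at the $\lambda_i$ and tends to $0$ at infinity, so a monotonicity count on each interval gives exactly one pole of $M_++M_-$ in every $(\lambda_j,\lambda_{j+1})$ plus one further pole in $(-\infty,\lambda_1)$ when $\beta<0$ or in $(\lambda_N,+\infty)$ when $\beta>0$; thus $M_++M_-$ has $N$ poles, one more than in Theorem \ref{theorem3.3}. Moreover $\beta+G(0,a)=\beta+\sum_i\lambda_i|\varphi_i(a)|^2=0$, so $z=0$ is a pole, in agreement with \eqref{M+0aM-0a}.

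I would then split these poles. As in Theorem \ref{theorem3.3}, the origin is the only common pole of $M_\pm$ and carries residue $-\tfrac12$ in each; any nonzero pole lying in an interval $(\lambda_j,\lambda_{j+1})$ with $\lambda_j\lambda_{j+1}>0$ is assigned to $M_+$ or $M_-$ according to $\sgn(\varphi_j(a)\varphi_{j+1}(a))$ via Lemma \ref{lemma3.3}, because $\sgn(\varphi_i(a))=\sgn(\phi_+(\lambda_i,a))$. For (i) every $\lambda_j\lambda_{j+1}>0$, and $0$ lies in the unbounded interval carrying the extra pole (if $\lambda_1>0$ then $\beta<0$ and $0\in(-\infty,\lambda_1)$; if $\lambda_N<0$ then $\beta>0$ and $0\in(\lambda_N,+\infty)$); hence the extra pole \emph{is} the common pole, every remaining pole is interior and forced, so $M_+$ and $M_-$ are determined and Lemmas \ref{lemma2.6}, \ref{lemma2.7} reconstruct $\omega,v$ uniquely.

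For (ii) the origin lies in the interior interval $(\lambda_{j_0},\lambda_{j_0+1})$, so the common pole is interior and the extra pole is a \emph{distinct} pole in $(-\infty,\lambda_1)$ or $(\lambda_N,+\infty)$. Since the only sign change of the eigenvalues is at $j_0$, every interior nonzero pole lies between same-sign eigenvalues and is forced by Lemma \ref{lemma3.3}, whereas the extra pole, not being bracketed by two zeros of $M_++M_-$, is unconstrained. Assigning it to $M_+$ or to $M_-$ gives two admissible splittings; each yields a rational Herglotz--Nevanlinna $M_+$ with ${\rm Res}_{z=0}M_+=-\tfrac12$ and a rational Herglotz--Nevanlinna $M_-$ with ${\rm Res}_{z=0}M_-=-\tfrac12$ and $M_-(z)\to0$, so Lemmas \ref{lemma2.6} and \ref{lemma2.7} produce two pairs $(\omega,v)$. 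Both share the same $M_++M_-$, hence the same $\lambda_i$ and the same $|\varphi_i(a)|$ by Lemma \ref{lemma3.1}, and (as at the end of Theorem \ref{theorem3.3}) the reconstructed signs $\sgn(\varphi_i(a))$ are fixed by the forced interior assignments together with the base signs at $\lambda_{j_0},\lambda_{j_0+1}$ from Remark \ref{remark5.11}, none of which involves the extra pole. Thus both pairs realize the prescribed data and are distinct, giving exactly two solutions.

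The main obstacle is this last verification: one must confirm that moving the single free pole leaves every reconstructed $\sgn(\varphi_i(a))$ unchanged and equal to the prescribed value, which is precisely where the oscillation results of Section 6 (Remark \ref{remark5.11}) enter; one must also check that there is exactly one free pole, so that the count of solutions in (ii) is two rather than more.
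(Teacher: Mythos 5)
Your proof is correct and follows essentially the same route as the paper: Lemma \ref{lemma3.1} determines $M_+ + M_-$ from the data, interlacing locates its $N$ poles, Lemma \ref{lemma3.3} forces every pole bracketed by same-sign eigenvalues, and Lemmas \ref{lemma2.6} and \ref{lemma2.7} reconstruct $\omega,v$, with the single unbracketed pole being $z=0$ itself in case (i) (hence uniqueness) and a genuinely free pole in case (ii) (hence exactly two solutions). The details you add---the sign of $\beta$ fixing on which side the extra pole lies, and the final check via Remark \ref{remark5.11} that both splittings realize the prescribed signs $\sgn(\varphi_i(a))$---are precisely the steps the paper compresses into ``following the similar steps in the proof of Theorem \ref{theorem3.3}.''
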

\begin{proof}
{Arguing as in Theorem \ref{theorem3.3}}, there exist $\zeta, \beta_i,\mu_i, i=1,\cdots,N$, (~determined by $\{\lambda_i, \varphi_i(a) \}_{i=1}^N$ ) so that
\begin{equation}\label{zeta2}
M_+(z,a)+M_-(z,a)=\zeta+\sum_{i=1}^{N}\frac{\beta_i}{\mu_i-z}, \ {\beta_i>0},
\end{equation}
where $\{\mu_i\}_{i=1}^{N}$ are the poles of ${M_+(z,a)}+{M_-(z,a)}$. {We emphasize that ${M_+(z,a)}+{M_-(z,a)}$ has the same number of zeros and poles.}

Let us show ${\rm{(i)}}$ first. We only consider the case that $\lambda_1>0$, the proof is similar for the case that $\lambda_N<0$. Clearly, $\{\mu_i\}_{i=1}^N$ and $\{\lambda_i\}_{i=1}^N$ are interlaced:
\begin{equation}
\mu_1=0<\lambda_1<\mu_2<\lambda_2<\cdots<\mu_{N-1}<\lambda_{N-1}<\mu_N<\lambda_N.
\end{equation}
 Following the {similar} steps in the proof of Theorem \ref{theorem3.3},  we can completely determine all poles of $M_+(z,a)$ and  $M_-(z,a)$, respectively, by the signs of $\varphi_i(a), i=1,\cdots,N$. Then by \eqref{M+0aM-0a},  \eqref{limM-zaoverz} and \eqref{zeta2}, $M_+(z,a)$ and $M_-(z,a)$ are uniquely determined. Hence, we can obtain  {exactly} one acceptable $\omega, v$.

Now we are going to prove ${\rm(ii)}$. Since $\{\mu_i\}_{i=1}^N$ and $\{\lambda_i\}_{i=1}^N$ are interlaced, we have either
\begin{equation}
\lambda_1<\mu_1<\cdots<\lambda_{j_0}<\mu_{j_0}=0<\lambda_{j_0+1}<\cdots<\lambda_N<\mu_N,   \label{lambda11}
\end{equation}
or
\begin{equation}
\mu_1<\lambda_1<\cdots<\lambda_{j_0-1}<\mu_{j_0}=0<\lambda_{j_0}<\cdots<\mu_{N}<\lambda_N.  \label{mu1}
\end{equation}
We only consider the case  that \eqref{lambda11} holds.  Following the similar steps in the proof of Theorem \ref{theorem3.3}, we can completely determine all poles of $M_+(z,a)$ and  $M_-(z,a)$, respectively, except for $\mu_N$. Since  $\mu_N$ can  be a pole of $M_+(z,a)$ or $M_-(z,a)$, we have two acceptable $\omega,v$'s. Therefore, we can conclude that {two $\omega, v$}'s yield the given interior spectral data $\{\lambda_i, \varphi_i(a) \}_{i=1}^N$. 
\end{proof}

\begin{theorem} \label{theorem3.7}
 For given $a$, suppose that $\{\lambda_i, \varphi_i(a) \}_{i=1}^N$ is the interior spectral data for some $\omega,v$.
Assume that $\alpha\neq0$ and
\begin{equation}
\varphi_i(a)\neq0 , i=1,\cdots, N. \notag
\end{equation}
$\rm{(i)}$ If we have
$\lambda_1>0 $ or $\lambda_N<0$,
  then  two $\omega, v$'s yield the given interior spectral data $\{\lambda_i, \varphi_i(a) \}_{i=1}^N$.\\
$\rm{(ii)}$ If for some $j_0\in \{1,\cdots, N-1\}$ we have
\begin{equation}
\lambda_{j_0}<0<\lambda_{j_0+1},   \notag
\end{equation}
 then  four $\omega, v$'s yield the given interior spectral data $\{\lambda_i, \varphi_i(a) \}_{i=1}^N$.
\end{theorem}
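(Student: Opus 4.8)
The plan is to run the same machinery as in Theorem \ref{theorem3.3} and Theorem \ref{aaaa}, now in the regime $\alpha>0$, and to track precisely how many poles of $M_+(z,a)+M_-(z,a)$ are left undetermined by the signs of the $\varphi_i(a)$. First I would start from Lemma \ref{lemma3.1}: since $\alpha\neq0$ and $\alpha\geq0$ we have $\alpha>0$, so by \eqref{eq3.2} and \eqref{G} the function $-1/(M_+(z,a)+M_-(z,a))=\alpha z+\beta+G(z,a)$ is a rational Herglotz-Nevanlinna function whose numerator has degree exactly $N+1$ (the linear term survives precisely because $\alpha>0$). Consequently $M_+(z,a)+M_-(z,a)$ decays to $0$ at infinity and carries one more pole than in Theorem \ref{aaaa}, admitting the representation
\[
M_+(z,a)+M_-(z,a)=\sum_{i=1}^{N+1}\frac{\beta_i}{\mu_i-z},\qquad \beta_i>0,
\]
with $N+1$ poles $\{\mu_i\}$ and $N$ zeros $\{\lambda_i\}$, all determined by the data. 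The interlacing now forces a pole at each end, $\mu_1<\lambda_1<\mu_2<\cdots<\lambda_N<\mu_{N+1}$. By \eqref{M+0aM-0a}, $z=0$ is the unique common pole of $M_+$ and $M_-$, so I would identify which $\mu_i$ equals $0$: in case (i) (say $\lambda_1>0$) it is the leftmost pole $\mu_1$, whereas in case (ii) it is the interior pole straddling $0$.

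Next I would assign the remaining poles to $M_+(z,a)$ or $M_-(z,a)$ exactly as in Theorem \ref{theorem3.3}, applying Lemma \ref{lemma3.3} with $F_1=M_-$, $F_2=M_+$ and $B_2(z)=z\phi_+(z,a)$. Every interior pole $\mu_j$ lies strictly between two consecutive eigenvalues of the same sign, so $\mathrm{sgn}\big(B_2(\lambda_j)B_2(\lambda_{j+1})\big)=\mathrm{sgn}\big(\varphi_j(a)\varphi_{j+1}(a)\big)$ by \eqref{eq2.6}, and membership of $\mu_j$ in $M_+$ or $M_-$ is forced by the data. The two end poles $\mu_1$ and $\mu_{N+1}$ are not trapped between two eigenvalues, so Lemma \ref{lemma3.3} gives no information and each may be placed in either function. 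In case (i) the common pole $0$ absorbs one end, leaving a single free pole and hence two admissible splittings; in case (ii) both ends are free, giving $2\times 2=4$ splittings. For each splitting I would set $M_\pm(z,a)=-\tfrac{1}{2z}+\sum_{i\in I_{1,2}}\tfrac{\beta_i}{\mu_i-z}$, observe that these are automatically Herglotz-Nevanlinna with the residue normalization \eqref{M+0aM-0a} and the decay \eqref{limM-zaoverz}, and then invoke Lemmas \ref{lemma2.6} and \ref{lemma2.7} to recover a unique $(\omega,v)$; distinct splittings give distinct pole sets of $M_+$, hence genuinely distinct pairs $(\omega,v)$.

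Finally, as at the end of the proof of Theorem \ref{theorem3.3}, I would verify that each of the two (resp. four) reconstructed pairs really reproduces the full signed data $\{\lambda_i,\varphi_i(a)\}_{i=1}^N$. The eigenvalues and magnitudes are automatic, since all splittings share the same $M_++M_-$ and hence the same $G(z,a)$, which involves only $|\varphi_i(a)|$. For the signs, the interior assignments were chosen to match $\mathrm{sgn}(\varphi_j(a)\varphi_{j+1}(a))$, while one base sign is pinned down by oscillation through \eqref{lambda1} and Remark \ref{remark5.11}; the decisive observation is that the free end poles $\mu_1,\mu_{N+1}$ lie outside every interval between $0$ and an eigenvalue $\lambda_i$, so moving them between $M_+$ and $M_-$ never changes the sign count of $\phi_+(\lambda_i,a)$ and therefore leaves every $\mathrm{sgn}(\varphi_i(a))$ intact. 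I expect this to be the main obstacle: one must confirm simultaneously that each end-pole choice yields admissible Weyl-Titchmarsh functions and that it preserves the prescribed signs, so that the count is \emph{exactly} two (resp. four) — neither collapsing distinct solutions together nor manufacturing spurious ones.
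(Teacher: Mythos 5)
Your proposal is correct and follows essentially the same route as the paper: the representation $M_+(z,a)+M_-(z,a)=\sum\beta_i/(\mu_i-z)$ with $N+1$ poles interlacing the $N$ eigenvalues, assignment of the interior poles via Lemma \ref{lemma3.3} and the signs $\varphi_j(a)\varphi_{j+1}(a)$, and the observation that one end pole (case (i)) or both end poles (case (ii)) are unconstrained, yielding exactly $2$ or $4$ pairs $(\omega,v)$ through Lemmas \ref{lemma2.6} and \ref{lemma2.7}. Your closing verification that the free end poles do not disturb the signs $\mathrm{sgn}(\varphi_i(a))$ is a detail the paper leaves implicit (it only invokes the Theorem \ref{theorem3.3} argument), but it is consistent with, and does not depart from, the paper's approach.
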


\begin{proof}
Since $M_+(z,a)+M_-(z,a)$ is a Herglotz-Nevanlinna function, by $\alpha\neq0$ and Lemma 3.1, there exist $ {\beta_i>0},\mu_i, i=0,\cdots,N$, (~determined by $\{\lambda_i, \varphi_i(a) \}_{i=1}^N$ ) so that
\begin{equation}
M_+(z,a)+M_-(z,a)=\sum_{i=0}^N \frac{\beta_i}{\mu_i-z},      \label{N+1}
\end{equation}
where $\{\mu_i\}_{i=0}^{N}$ are the poles of ${M_+(z,a)}+{M_-(z,a)}$. {We emphasize that ${M_+(z,a)}+{M_-(z,a)}$ has one more pole than zeros.}

Let us show ${\rm{(i)}}$ first. We only consider the case that $\lambda_1>0$.
Clearly, $\{\mu_i\}_{i=0}^N$ and $\{\lambda_i\}_{i=1}^N$ are interlaced:
\begin{equation}
\mu_0=0<\lambda_1<\mu_1<\lambda_1<\cdots<\lambda_{N-1}<\mu_{N-1}<\lambda_N<\mu_N.  \nonumber
\end{equation}
    Then we can completely determine all poles of $M_+(z,a)$ and  $M_-(z,a)$, respectively, except for $\mu_N$.
    Since  $\mu_N$ can  be a pole of $M_+(z,a)$ or $M_-(z,a)$, we have two acceptable $\omega,v$'s. Therefore, we can conclude that {two $\omega, v$}'s yield the given interior spectral data $\{\lambda_i, \varphi_i(a) \}_{i=1}^N$.

 Now we are going to prove ${\rm(ii)}$. Clearly, $\{\mu_i\}_{i=0}^N$ and $\{\lambda_i\}_{i=1}^N$ are interlaced:
 \begin{equation}
\mu_0<\lambda_1<\cdots<\lambda_{j_0}<\mu_{j_0}=0<\lambda_{j_0+1}<\cdots<\mu_{N-1}<\lambda_N<\mu_N.  \nonumber
\end{equation}
 Hence,  we can completely determine all poles of $M_+(z,a)$ and  $M_-(z,a)$, respectively, except for $\mu_0, \mu_N$.  Since  $\mu_0, \mu_N$ can  be the  poles of $M_+(z,a)$ or $M_-(z,a)$, we have four acceptable $\omega,v$'s.
  Therefore, we can conclude that {four $\omega, v$}'s yield the given interior spectral data $\{\lambda_i, \varphi_i(a) \}_{i=1}^N$.
\end{proof}
%\blue{We mention that by Theorem \ref{theorem5.9},  $\varphi_{i}, i=1,\cdots, N$ has only finite zeros. Then generically, for given $a$, we have 	$\varphi_i(a)\neq0 , i=1,\cdots, N.$ and the case we discussed in this section is most likely to happen. }

\begin{example} Suppose that $\sigma(H)=\{\lambda_1\}$, {thus there exist $c, d\in \mathbb{R}$}, so that  $W(z)=cz+d$.
 Then by $\eqref{lianxu}, \eqref{tiaoyue}$ and $\eqref{Wzdefine}$, one obtains that  $$\omega=\omega_1\delta_{x_1},\  v=0$$ for some $\omega_1,x_1$. After some calculations, we have
\begin{eqnarray} \label{zomega1}
\phi_+(z,x)=
\begin{cases}
z\omega_1e^{\frac{x}{2}-x_1}+(1-z\omega_1)e^{-\frac{x}{2}},      & x<x_1, \\
 e^{-\frac{x}{2}},  & x\geq x_1. \\
\end{cases}
\end{eqnarray}
Then one has $1-\lambda_1 \omega_1 =0$. Namely,
\begin{equation}
\omega_1=\frac{1}{\lambda_1}.   \label{omega1}
\end{equation} By \eqref{eq2.4}, \eqref{eq2.6}, \eqref{zomega1} and \eqref{omega1}, we can obtain
\begin{eqnarray} \label{varphi1}
\varphi_1(x)=
\begin{cases}
e^{\frac{x-x_1}{2}},      & x<x_1, \\
 e^{\frac{x_1-x}{2}},  & x\geq x_1. \\
\end{cases}
\end{eqnarray}
Then we have the following two cases:\\
$\rm{(i)}$  $\alpha=0$, namely, $\varphi_1(a)=1$, 
then {$\{\lambda_1, \varphi_1(a)\}$} uniquely determines $\omega, v$. By~\eqref{varphi1}, one has $x_1=a$. Hence $$\omega= \frac{1}{\lambda_1}\delta_a, \ v=0.$$ \\
$\rm{(ii)}$ $\alpha\neq0$, namely, $\varphi_1(a)=|\varphi_1(a)|<1$, then two $\omega, v$'s yield the given interior spectral data {$\{\lambda_1, \varphi_1(a)\}$}. From \eqref{varphi1}, one obtains
\begin{equation}
x_1=a+2\ln \varphi_1(a) \ {\rm{or}} \  x_1=a-2\ln \varphi_1(a). \notag
\end{equation} Hence we can conclude that
$$\omega=\frac{1}{\lambda_1}\delta_{a+2\ln \varphi_1(a)}, \ v=0,$$ or
$$\omega=\frac{1}{\lambda_1}\delta_{a-2\ln \varphi_1(a)}, \ v=0.$$
\end{example}

{	For any $N\in \mathbb{N}$, let $\mathcal{T}$ be the set of  real numbers $\{\lambda_i, \alpha_i\}_{i=1}^N$ with $\lambda_1<\lambda_2<\cdots<\lambda_N$
	satisfying
	\begin{equation} \label{sumalpha}
\sum_{i=1}^{N}\alpha^2_i\le1
	\end{equation}
	and 
	 \begin{align} \nonumber
	\lambda_{j_0}<0<\lambda_{j_0+1},\ \alpha_{j_0}>0,\ \alpha_{j_0+1}>0,\label{lambdaj0}
	\end{align}
	for some $j_0\in \{0,\cdots, N\}$,
	 where $j_0=0$ means that $\lambda_1>0, \alpha_1>0$ and $j_0=N$ means that $\lambda_N<0, \alpha_N>0$.}
	 
 We next discuss the existence of the interior spectral problem.

\begin{lemma}  \label{theorem3.8}
	Assume that
		\begin{equation}
	\alpha_i\ne0, i=1,\cdots,N.  \nonumber
	\end{equation}
	Then for given $a$, $\{\lambda_i, \alpha_i\}_{i=1}^N$ is the interior spectral data for some $\omega,v$ at $a$ if and only if $\{\lambda_i, \alpha_i\}_{i=1}^N\in \mathcal{T}$.

\end{lemma}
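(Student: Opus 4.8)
The plan is to prove the two implications separately. Throughout write $\alpha_i=\varphi_i(a)$ and keep the abbreviations $\alpha=1-\sum_{i=1}^N\alpha_i^2$, $\beta=-\sum_{i=1}^N\lambda_i\alpha_i^2$ from Lemma \ref{lemma3.1}, together with the Herglotz--Nevanlinna function $G(z,a)=\sum_{i=1}^N\lambda_i^2\alpha_i^2/(\lambda_i-z)$ of \eqref{G}. The bridge between the data and the operator is the identity $-1/(M_+(z,a)+M_-(z,a))=\alpha z+\beta+G(z,a)$ of Lemma \ref{lemma3.1}.

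For necessity, assume $\{\lambda_i,\alpha_i\}_{i=1}^N$ is the interior spectral data of some $H(\omega,v)$ with all $\alpha_i\ne0$. Lemma \ref{lemma3.1} gives $\alpha\ge0$, which is exactly \eqref{sumalpha}. Since $0\notin\sigma(H)$, there is a well-defined $j_0\in\{0,\dots,N\}$ with $\lambda_{j_0}<0<\lambda_{j_0+1}$ (the cases $j_0=0$ and $j_0=N$ corresponding to all eigenvalues positive, resp. negative). It remains to see $\alpha_{j_0}>0$ and $\alpha_{j_0+1}>0$; this is the oscillation statement that the eigenfunctions attached to the two eigenvalues adjacent to $0$ have no zeros and, being equal to $e^{\mp x/2}>0$ near $\pm\infty$ by \eqref{eq2.222}, are positive at $a$ (Remark \ref{remark5.11} and Section 6). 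Hence the data lies in $\mathcal{T}$.

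For sufficiency, given $\{\lambda_i,\alpha_i\}_{i=1}^N\in\mathcal{T}$, set $\Phi(z)=\alpha z+\beta+G(z,a)$. Since $\alpha\ge0$ by \eqref{sumalpha}, $\Phi$ is Herglotz--Nevanlinna, and so is $F(z):=-1/\Phi(z)$. A direct computation yields $\Phi(0)=\beta+G(0,a)=0$ and $\Phi'(0)=\alpha+\sum_i\alpha_i^2=1$, so $F$ has a simple pole at $0$ with ${\rm Res}_{z=0}F=-1$; its zeros are the $\lambda_i$ and its poles $\{\mu_i\}$ interlace them, with $0$ falling in the gap $(\lambda_{j_0},\lambda_{j_0+1})$. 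I then split $F=M_++M_-$ as follows: assign the part of $F$ that does not vanish at infinity (a term $\gamma z+\zeta$, nonzero precisely when $\alpha=0$) to $M_+$; split the pole at $0$ with residue $-1/2$ into each summand to match \eqref{M+0aM-0a}; for each nonzero $\mu_j\in(\lambda_j,\lambda_{j+1})$ put it in $M_+$ when $\alpha_j\alpha_{j+1}<0$ and in $M_-$ otherwise, exactly as in Lemma \ref{lemma3.3}; and place any pole lying outside $[\lambda_1,\lambda_N]$ (which occurs only when $\alpha\ne0$) in $M_+$. Because every simple pole carries a positive weight, $M_+$ is a rational Herglotz--Nevanlinna function with ${\rm Res}_{z=0}M_+=-1/2$ and $M_-$ is a rational Herglotz--Nevanlinna function with ${\rm Res}_{z=0}M_-=-1/2$ and $\lim_{|z|\to\infty}M_-(z)=0$. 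Lemmas \ref{lemma2.6} and \ref{lemma2.7} then produce $\omega,v$ of the form \eqref{eq2.111} whose half-line Weyl--Titchmarsh functions are $M_\pm$.

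Finally I verify that this $\omega,v$ reproduces the data. Applying Lemma \ref{lemma3.1} to $H(\omega,v)$ gives $-1/(M_++M_-)=\sum_i\hat\lambda_i^2\hat\alpha_i^2/(\hat\lambda_i-z)$, and comparing with $\Phi=\sum_i\lambda_i^2\alpha_i^2/(\lambda_i-z)$ forces $\hat\lambda_i=\lambda_i$ and $|\hat\alpha_i|=|\alpha_i|$. By oscillation (Remark \ref{remark5.11}) the signs at $j_0,j_0+1$ are positive, matching $\alpha_{j_0},\alpha_{j_0+1}>0$, while for $j\ne j_0,j_0+1$ the sign of $\hat\alpha_j$ is pinned down by which of $M_\pm$ owns $\mu_j$ --- precisely the choice I read off from the signs of the $\alpha_i$ --- so $\hat\alpha_i=\alpha_i$ throughout. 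I expect the main obstacle to be exactly this last consistency check: one must guarantee that the sign pattern dictated by the pole-splitting is compatible with the signs forced by oscillation at the eigenvalues adjacent to $0$, which is why the positivity conditions $\alpha_{j_0},\alpha_{j_0+1}>0$ are part of the definition of $\mathcal{T}$; a subsidiary point is to confirm that the constructed $M_\pm$ satisfy all the hypotheses of Lemmas \ref{lemma2.6} and \ref{lemma2.7} and that $0$ stays a common pole, so that $0\notin\sigma(H)$ for the reconstructed operator.
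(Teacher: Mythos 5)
Your proposal is correct and follows essentially the same route as the paper: necessity from Lemma \ref{lemma3.1} together with the oscillation result (Remark \ref{remark5.11}), and sufficiency by the pole-splitting construction of the proof of Theorem \ref{theorem3.3} (which is precisely what the paper's own two-line proof invokes), with Lemmas \ref{lemma2.6} and \ref{lemma2.7} supplying the reconstruction and the final sign-consistency check done exactly as in the paper. One small misstatement: poles of $-1/\Phi$ lying outside $[\lambda_1,\lambda_N]$ are not confined to the case $\alpha\neq 0$ (one also appears when $\alpha=0$, $\beta\neq 0$ and $\lambda_{j_0}<0<\lambda_{j_0+1}$, cf.\ Theorem \ref{aaaa}(ii)), but since your assignment rule is phrased by the location of the pole rather than by the value of $\alpha$, the construction and verification are unaffected.
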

\begin{proof}
The necessity is obtained from Lemma \ref{lemma3.1} and Remark \ref{remark5.11}.  Following the similar steps in the proof of Theorem \ref{theorem3.3}, we can obtain {the} sufficiency.
\end{proof}

In the following, we require that if a certain symbol $\gamma$ denotes an object related to $H(\omega, v)$, then the corresponding symbol $\tilde{\gamma}$ with tilde denotes the analogous object related to $H(\tilde{\omega}$, $\tilde{v})$.
\begin{theorem}  \label{corollary 3.10}
For given $a$, suppose that $\{\lambda_i, \varphi_i(a) \}_{i=1}^N$ is the interior spectral data for some $\omega,v$ with
\begin{equation}
\varphi_i(a)\neq0 , i=1,\cdots, N. \notag  \\
\end{equation}
$\rm{(i)}$ If
\begin{equation}
\alpha=\beta=0,   \nonumber
\end{equation}
 then there are precisely $2^{N-2}$ $\tilde{\omega}, \tilde{v}$'s,  so that
\begin{equation}
\tilde{\lambda}_i=\lambda_i,|\tilde{\varphi}_i(a)|=|\varphi_i(a)| , i=1,\cdots,N. \notag
 \end{equation}
 $\rm{(ii)}$ If
\begin{equation}
\alpha=0,\ \beta\neq0,  \nonumber
\end{equation}
 then there are precisely $2^{N-1}$ $\tilde{\omega}, \tilde{v}$'s, so that
\begin{equation}
\tilde{\lambda}_i=\lambda_i,|\tilde{\varphi}_i(a)|=|\varphi_i(a)| , i=1,\cdots,N. \notag
 \end{equation}
 $\rm{(iii)}$ If
\begin{equation}
\alpha\neq0,   \nonumber
\end{equation}
 then there are precisely $2^N$ $ \tilde{\omega}, \tilde{v}$'s,  so that
\begin{equation}
\tilde{\lambda}_i=\lambda_i,|\tilde{\varphi}_i(a)|=|\varphi_i(a)| , i=1,\cdots,N. \notag
 \end{equation}
\end{theorem}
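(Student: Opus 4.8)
The plan is to reduce the count to a count of ways of distributing the poles of $M_+(z,a)+M_-(z,a)$ between the two Weyl--Titchmarsh functions, exactly as in the proofs of Theorems \ref{theorem3.3}, \ref{aaaa} and \ref{theorem3.7}, but now retaining the fact that only the moduli $|\varphi_i(a)|$, and not the signs, are prescribed. First I would observe that by Lemma \ref{lemma3.1} the function
\[
-\frac{1}{M_+(z,a)+M_-(z,a)}=\alpha z+\beta+G(z,a)
\]
depends on the data only through $\{\lambda_i,|\varphi_i(a)|^2\}$, since $\alpha$, $\beta$ and $G(z,a)$ are each expressed in terms of the $\lambda_i$ and $|\varphi_i(a)|^2$. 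Hence $M_+(z,a)+M_-(z,a)$, together with its poles $\{\mu_i\}$ and positive residues $\{\beta_i\}$, is completely determined by $\{\lambda_i,|\varphi_i(a)|\}$. The hypothesis $\varphi_i(a)\neq0$ guarantees that all $N$ eigenvalues $\lambda_i$ are genuine poles of $G(z,a)$, so that a degree--and--behaviour-at-infinity count shows $M_++M_-$ has $N-1$, $N$ and $N+1$ poles in cases $\rm(i)$, $\rm(ii)$ and $\rm(iii)$ respectively, matching \eqref{gammazeta}, \eqref{zeta2} and \eqref{N+1}.

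Next I would isolate the forced part of the decomposition. By \eqref{M+0aM-0a} the pole $z=0$ is a common pole of $M_+$ and $M_-$ with residue $-\tfrac12$ each, so it contributes no freedom; and by \eqref{limM-zaoverz}, $M_-(z,a)\to0$ at infinity, so the entire polynomial part of $M_++M_-$ (namely $\gamma z+\zeta$ in case $\rm(i)$, $\zeta$ in case $\rm(ii)$, and nothing in case $\rm(iii)$) must belong to $M_+$. Every remaining pole $\mu_i\neq0$ fails to be a common pole, since a common nonzero pole would force $W(\mu_i)=0$, i.e. $\mu_i\in\sigma(H)$, contradicting the interlacing, exactly as argued in Theorem \ref{theorem3.3}; hence it must be assigned wholly to $M_+$ or to $M_-$. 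This leaves $N-2$, $N-1$ and $N$ freely assignable poles in the three cases.

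Then I would show that each of the $2^{N-2}$, $2^{N-1}$, $2^{N}$ assignments is admissible and produces a distinct $(\tilde\omega,\tilde v)$. Fixing an assignment, I place the chosen poles together with the polynomial part into $M_+$ and the remaining poles together with the pole at $0$ into $M_-$. Since all residues $\beta_i$ are positive, both functions are rational Herglotz--Nevanlinna functions; $M_-$ vanishes at infinity and both have residue $-\tfrac12$ at $z=0$, so Lemmas \ref{lemma2.6} and \ref{lemma2.7} furnish unique $\tilde\omega|_{[a,+\infty)},\tilde v|_{[a,+\infty)}$ and $\tilde\omega|_{(-\infty,a)},\tilde v|_{(-\infty,a)}$, hence a full pair $(\tilde\omega,\tilde v)$. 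Distinct assignments give distinct pole sets for $M_+$, so the uniqueness in Lemma \ref{lemma2.6} forces the corresponding $(\tilde\omega,\tilde v)$ to be distinct. Finally, running the verification step of Theorem \ref{theorem3.3} in reverse, $M_\pm$ are by construction the Weyl--Titchmarsh functions of $H_\pm(\tilde\omega,\tilde v,a)$, so $-1/(\tilde M_++\tilde M_-)=-1/(M_++M_-)$ has the same poles $\lambda_i$ and the same residues $-\lambda_i^2|\varphi_i(a)|^2$ by \eqref{Resz=lambdai}; this yields $\tilde\lambda_i=\lambda_i$ and $|\tilde\varphi_i(a)|=|\varphi_i(a)|$, while $\varphi_i(a)\neq0$ ensures $\tilde\varphi_i(a)\neq0$ so that no assignment is degenerate.

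The main obstacle I anticipate is the bookkeeping that matches the three residue/degree structures of $\alpha z+\beta+G$ to the pole counts $N-1,N,N+1$ and then to the free-pole counts, and in particular arguing cleanly that, once only the moduli $|\varphi_i(a)|$ are prescribed, no constraint beyond the forced common pole at $0$ and the forced polynomial part in $M_+$ survives. Concretely, the delicate sign-dependent rules that pinned down (or left ambiguous) individual poles in Theorems \ref{aaaa} and \ref{theorem3.7} must be shown to dissolve here, so that every nonzero, non-polynomial pole becomes genuinely free; verifying this uniformly across the two interlacing patterns \eqref{lambda11} and \eqref{mu1} and across all three cases is where the care is required.
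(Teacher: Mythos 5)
Your proposal is correct, but it takes a genuinely different route from the paper's. The paper counts in two stages: it first counts the admissible sign patterns of $(\tilde{\varphi}_1(a),\cdots,\tilde{\varphi}_N(a))$, using Remark \ref{remark5.11} to force positivity at the extreme eigenvalues and Lemma \ref{theorem3.8} to show that every remaining sign choice is realizable, and then multiplies by the number of solutions per sign pattern supplied by Theorems \ref{theorem3.3}, \ref{aaaa} and \ref{theorem3.7} (one, two, or four, according to the case); for instance, in case (iii) the count is $2^{N-1}\cdot 2$ or $2^{N-2}\cdot 4=2^{N}$. You instead count sign-free: since $M_+(z,a)+M_-(z,a)$ is determined by $\{\lambda_i,|\varphi_i(a)|\}_{i=1}^N$ alone, you enumerate the $2^{N-2}$, $2^{N-1}$, $2^{N}$ binary assignments of its nonzero poles to $M_+$ or $M_-$ (the pole at $z=0$ being common with residue $-\tfrac{1}{2}$ on each side by \eqref{M+0aM-0a}, and the polynomial part being forced into $M_+$ by \eqref{limM-zaoverz}), and you construct and verify each solution directly via Lemmas \ref{lemma2.6} and \ref{lemma2.7}. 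Your route bypasses the oscillation theory and the sign bookkeeping altogether, and it makes transparent why all three answers are clean powers of two: the sign constraints and the per-pattern multiplicities in the paper's count cancel exactly. What the paper's factorized count buys is reuse of its already-established (non)uniqueness theorems and a record of how the $2^N$ solutions distribute over sign classes, which matters for its other arguments. One step you should tighten: in your final verification, an eigenvalue of $H(\tilde{\omega},\tilde{v})$ whose eigenfunction vanishes at $a$ would be invisible as a pole of $-1/(\tilde{M}_++\tilde{M}_-)$, so matching poles and residues does not by itself pin down the whole spectrum; you must add that such a hidden eigenvalue would be a common nonzero pole of $\tilde{M}_+$ and $\tilde{M}_-$ (exactly the fact you invoke for the original data), and that by construction your $\tilde{M}_\pm$ have no common pole other than $z=0$, whence $\sigma(H(\tilde{\omega},\tilde{v}))=\{\lambda_i\}_{i=1}^N$. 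Your closing clause that ``no assignment is degenerate'' gestures at this but does not prove it.
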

\begin{proof}
	We first show that ${\rm{(iii)}}$ holds. We split the proof into  two cases.\\
	Case 1: $\lambda_1>0$ or $\lambda_N<0$.
	 We only consider the case that $\lambda_1>0$. By Remark \ref{remark5.11},  we have that $\tilde{\varphi}_1(a)>0$. From Lemma \ref{theorem3.8}, for any $i\in \{2,\cdots,N\}$,  the sign of $\tilde{\varphi_i}(a)$ is not determined.   {Then} we  can choose the signs of $\tilde{\varphi}_1(a),\cdots,\tilde{\varphi}_N(a)$
	in $2^{N-1}$ ways. Every such choice yields two $\tilde{\omega},\tilde{v}$'s by (i) of Theorem \ref{theorem3.7}. Hence, we determine $2^{N}$ $\tilde{\omega},\tilde{v}$'s. \\
Case 2:  For some $j_0\in\{1,\cdots,N-1\}$, we have
	\begin{equation}
	\lambda_{j_0}<0<\lambda_{j_0+1}.\notag
	\end{equation}
By Remark \ref{remark5.11},  we have $\tilde{\varphi}_{j_0}(a), \tilde{\varphi}_{j_0+1}(a)>0$. From Lemma \ref{theorem3.8}, for any $i\in\{1,\cdots,j_0-1, j_0+2,\cdots,N\}$, the sign of $\tilde{\varphi}_{i}(a)$ is not determined.  Then we can choose the signs of $\tilde{\varphi}_1(a), \cdots,  \tilde{\varphi}_N(a)$
in $2^{N-2}$ ways. Every such choice yields four $\tilde{\omega},\tilde{v}$'s by (ii) of Theorem \ref{theorem3.7}. Hence, we determine $2^{N}$ $\tilde{\omega},\tilde{v}$'s.

Note that in both cases, we have (iii) holds. Applying the above
method with {slight modification},  we know that (i) and (ii) hold.
	\end{proof}
%\begin{remark} \label{remark 3.9}
%	Let us see Theorem 6.3 in \cite{ges}. It is easy to obtain (cf \cite[Lemma 3]{rio})
%\begin{align}
%G(z,n,n)=\sum_{i=1}^N \frac{|\varphi_i(n)|^2}{\lambda_i-z},  \notag 
%\end{align}
%where $G(z,n,n), \varphi_i(n)$ are defined in \cite[pp. 268-271]{ges} with additional conditions $\varphi_i(1)>0$ for any $i$.
%Then we can see that the reconstruction of Jacobi matrix from $G(z,n,n)$ in \cite[Theorem 6.3]{ges} is equivalent to reconstructing  Jacobi matix from
%$$\lambda_i, |\varphi_i(n)|, i=1,\cdots, N.$$
% {\color{red} Hence, Theorem \ref{corollary 3.10} is similar to  in \cite[Theorem 6.3]{ges} in some sense.}

% But the fact that $P(z,n)$,  defined in \cite[p. 270]{ges}, has exact $n-1$ zeros in $(\lambda_1,\lambda_N)$ implies that  $\varphi_i(n),i=1,\cdots,N,$ changes signs for $n-1$ times. This limitation yields $\tbinom{N-1}{n-1}$ choices for $\varphi_i(n),i=1,\cdots,N$.  Following the similar steps of Theorem \ref{theorem3.3}, every such choice yields an acceptable Jacobi matrix. Hence, we have $\tbinom{N-1}{n-1}$   Jacobi matrix as claimed 
% in \cite[Theorem 6.3]{ges}. However, we do not have that limitation in Theorem \ref{corollary 3.10}.
%\end{remark}

\section{Interior Inverse Problem II}
In this section\red{,} we consider the interior inverse problem with the case that $\varphi_i(a)=0$ for some $i$. We need the following lemmas.

\begin{lemma} \label{lemma3.11}
	For any $a\in \mathbb{R}$, if $\varphi_{i_0}(a)=0$ for some $i_0\in \{2,\cdots, N-1\}$,
	then $$\varphi_{i_0-1}(a)\varphi_{i_0+1}(a)<0.$$
\end{lemma}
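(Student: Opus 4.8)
The plan is to push everything onto the single real polynomial $z\mapsto\phi_+(z,a)$ and to read off the sign pattern from the Herglotz structure of $F(z):=M_+(z,a)+M_-(z,a)$. First I would record two reductions. Since $\varphi_i(a)=\phi_+(\lambda_i,a)/\sqrt{\lambda_i\gamma_{\lambda_i}^2}$ by \eqref{eq2.6} and $\lambda_i\gamma_{\lambda_i}^2>0$, the normalizing factor is a positive real, so $\mathrm{sgn}\,\varphi_i(a)=\mathrm{sgn}\,\phi_+(\lambda_i,a)$ and $\varphi_i(a)=0$ iff $\phi_+(\lambda_i,a)=0$; thus it suffices to prove $\phi_+(\lambda_{i_0-1},a)\,\phi_+(\lambda_{i_0+1},a)<0$. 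Next, taking reciprocals in \eqref{eq3.4} gives $F(z)=-W(z)/\big(z\phi_+(z,a)\phi_-(z,a)\big)$, a Herglotz--Nevanlinna function whose zeros are exactly the \emph{active} eigenvalues $\{\lambda_i:\phi_+(\lambda_i,a)\neq0\}$ and whose poles lie among $\{0\}$ together with the zeros of $\phi_+(\cdot,a)$ and of $\phi_-(\cdot,a)$.

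The key structural point, and the main obstacle, is that at the eigenvalue $\lambda_{i_0}$ the proportionality \eqref{xishu} forces $\phi_-(\lambda_{i_0},a)=c_{\lambda_{i_0}}\phi_+(\lambda_{i_0},a)=0$, so $\lambda_{i_0}$ is a \emph{common} pole of $M_+(\cdot,a)$ and $M_-(\cdot,a)$. This is precisely why the pole–splitting Lemma \ref{lemma3.3} cannot be applied verbatim, and the argument must instead be run directly through the interlacing of zeros and poles of the single Herglotz function $F$. I will use repeatedly that, $F$ being Herglotz, it is strictly increasing from $-\infty$ to $+\infty$ on each interval between consecutive poles; equivalently there is exactly one zero of $F$ strictly between consecutive poles and exactly one pole strictly between consecutive zeros (as already recorded just before Lemma \ref{lemma3.3}).

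With this in hand I would first show $\varphi_{i_0\pm1}(a)\neq0$. Suppose, say, $\varphi_{i_0+1}(a)=0$; then, exactly as above, $\lambda_{i_0+1}$ is also a pole of $F$. Since there is no eigenvalue, hence no zero of $F$, in the open interval $(\lambda_{i_0},\lambda_{i_0+1})$, there can be no pole of $F$ strictly inside either, for a pole there would, together with $\lambda_{i_0}$, enclose a zero of $F$ in $(\lambda_{i_0},\lambda_{i_0+1})$. Thus $\lambda_{i_0}$ and $\lambda_{i_0+1}$ would be consecutive poles of $F$ with no zero between them, contradicting the interlacing. Hence $\varphi_{i_0+1}(a)\neq0$, and symmetrically $\varphi_{i_0-1}(a)\neq0$; here the hypothesis $i_0\in\{2,\dots,N-1\}$ is exactly what guarantees that $\lambda_{i_0\pm1}$ exist.

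Now $\lambda_{i_0-1}$ and $\lambda_{i_0+1}$ are \emph{consecutive} zeros of $F$ (the only eigenvalue between them, $\lambda_{i_0}$, is inactive), so $F$ has exactly one pole in $(\lambda_{i_0-1},\lambda_{i_0+1})$, and that pole must be $\lambda_{i_0}$. Because the residues of $M_+(\cdot,a)$ and $M_-(\cdot,a)$ at any of their poles are negative and hence cannot cancel, every pole of $M_+(\cdot,a)$ is a pole of $F$; moreover $0$ is a pole of $F$ by \eqref{M+0aM-0a}, so $0\notin(\lambda_{i_0-1},\lambda_{i_0+1})$. Consequently the only zero of $\phi_+(\cdot,a)$ in $(\lambda_{i_0-1},\lambda_{i_0+1})$ is $\lambda_{i_0}$, and it is simple: a rational Herglotz function has only simple poles, while $\phi_+'(\lambda_{i_0},a)\neq0$ (otherwise $\phi_+(\lambda_{i_0},\cdot)\equiv0$, contradicting \eqref{eq2.222}). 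Since $\phi_+(\cdot,a)$ is real and continuous and changes sign exactly once across this single simple zero while staying nonzero at the endpoints, $\phi_+(\lambda_{i_0-1},a)$ and $\phi_+(\lambda_{i_0+1},a)$ have opposite signs, which by the first reduction gives $\varphi_{i_0-1}(a)\varphi_{i_0+1}(a)<0$. The delicate steps to write out carefully are the non-cancellation of residues (used to conclude that every pole of $M_+(\cdot,a)$ is a pole of $F$) and the exclusion of $0$ from the interval; the rest is bookkeeping with the interlacing property.
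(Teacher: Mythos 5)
Your proof is correct, and its skeleton coincides with the paper's: both arguments first establish $\varphi_{i_0\pm1}(a)\neq0$ by a Herglotz interlacing contradiction, then identify $\lambda_{i_0}$ as the unique pole of $M_+(z,a)+M_-(z,a)$ between the consecutive zeros $\lambda_{i_0-1}$ and $\lambda_{i_0+1}$, pass this information to $M_+(z,a)$ alone via non-cancellation of the (negative) residues, and finally read off a sign change of $\phi_+(\cdot,a)$ across its single simple zero in the interval. The genuine difference is the endgame. The paper works with $z\phi_+(z,a)$, obtains
$\lambda_{i_0-1}\phi_+(\lambda_{i_0-1},a)\,\lambda_{i_0+1}\phi_+(\lambda_{i_0+1},a)<0$,
and must then divide out the factor $\lambda_{i_0-1}\lambda_{i_0+1}$; for this it invokes the oscillation theorem (Theorem \ref{theorem5.9}): if $\lambda_{i_0-1}$ and $\lambda_{i_0+1}$ had opposite signs, then $\lambda_{i_0}$ would be the eigenvalue closest to the origin, whose eigenfunction has no zeros, contradicting $\varphi_{i_0}(a)=0$. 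You avoid this entirely: since $z=0$ is always a pole of $M_+(z,a)+M_-(z,a)$ by \eqref{M+0aM-0a}, and the unique pole of that function in $(\lambda_{i_0-1},\lambda_{i_0+1})$ is $\lambda_{i_0}\neq0$, the origin cannot lie in the interval, so the sign change of $\phi_+(\cdot,a)$ itself (rather than of $z\phi_+(z,a)$) gives the conclusion directly. What your route buys is independence from Section 6: it stays entirely within the Herglotz/residue facts of Sections 2--3, whereas the paper's proof as written depends on the oscillation machinery; what the paper's route buys is that the link between vanishing of $\varphi_{i_0}(a)$ and eigenfunction oscillation is made explicit, a connection it exploits again elsewhere (e.g.\ Remark \ref{remark5.11} inside Theorem \ref{theorem3.3}). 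You are also right that Lemma \ref{lemma3.3} cannot be applied verbatim here because $\lambda_{i_0}$ is a common pole of $M_\pm(\cdot,a)$; both your argument and the paper's circumvent this in the same way, by counting poles of the sum rather than splitting them.
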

\begin{proof}
 If $\varphi_{i_0}(a)=0$, by \eqref{xishu} and \eqref{eq2.6}, we have
  \begin{equation} \label{zero0}
  \phi_+(\lambda_{i_0},a)=\phi_-(\lambda_{i_0},a)=0.
  \end{equation}
  Hence, by \eqref{eq3.4},  one has that
  \begin{align} \label{zero}
  -\frac{1}{M_+(\lambda_{i_0},a)+M_-(\lambda_{i_0},a)}=0.
  \end{align}
We first show that $$\varphi_{i_0-1}(a)\ne0, \ \varphi_{i_0+1}(a)\ne0. $$
 Otherwise, for example, assume that $\varphi_{i_0-1}(a)=0$. Then 
 \begin{align} \label{zero1}
 -\frac{1}{M_+(\lambda_{i_0-1},a)+M_-(\lambda_{i_0-1},a)}=0.
 \end{align}
  Since $-1/ (M_+(z,a)+M_-(z,a))$
  is a Herglotz-Nevanlinna function, by \eqref{zero} and \eqref{zero1}, the function  $-1/ (M_+(z,a)+M_-(z,a))$ has a pole $\nu$ in $(\lambda_{i_0-1},\lambda_{i_0})$.
This contradicts \eqref{eq3.2}.

Since
$\varphi_{i_0-1}(a)\ne0$ and $ \varphi_{i_0+1}(a)\ne0$,
applying  \eqref{eq3.2} and \eqref{zero}, $M_+(z,a)+M_-(z,a)$ has  exactly one pole $\lambda_{i_0}$ in the interval $[\lambda_{i_0-1},\lambda_{i_0+1}]$. Then by \eqref{zero0}, we know that  $M_+(z,a)$ has exactly one pole $\lambda_{i_0}$ in
   $[\lambda_{i_0-1},\lambda_{i_0+1}]$. From  \eqref{eq212},
   $z\phi_+(z,a)$ has exactly one zero $\lambda_{i_0}$   in $[\lambda_{i_0-1},\lambda_{i_0+1}]$. Therefore,
    we have
   \begin{equation} \label{11}
\lambda_{i_0-1}\phi_+(\lambda_{i_0-1},a)\lambda_{i_0+1}\phi_+(\lambda_{i_0+1},a)<0.
   \end{equation}
    Note that  Theorem \ref{theorem5.9} implies
   $\lambda_{i_0-1}\lambda_{i_0+1}>0$.
   Then one obtains  $$\phi_+(\lambda_{i_0-1},a)\phi_+(\lambda_{i_0+1},a)<0.$$ By \eqref{eq2.6}, one has that $\varphi_{i_0-1}(a)\varphi_{i_0+1}(a)<0$.
\end{proof}
	
Suppose that there are $k$ many $\varphi_i(a)$ such that $\varphi_i(a)=0$,
	and $\varphi_1(a)\ne0$,
	$\varphi_N(a)\ne0$. For example, assume that  $\varphi_{m_1}(a)=\cdots=\varphi_{m_k}(a)=0$.
	By Lemma \ref{lemma3.11}, for any $j=1,\cdots,k$, we obtain that
$M_+(z,a)+M_-(z,a)$ has exactly  one pole $\lambda_{m_j}$ in $(\lambda_{m_j-1},\lambda_{m_j+1})$. For any $i\notin \{m_1-1,m_1, \cdots,m_k-1,m_k\}$, $M_+(z,a)+M_-(z,a)$ has exactly one pole in $(\lambda_{i},\lambda_{i+1})$.

\begin{lemma} \label{lemma3.13}
$\rm{(i)}$ Suppose that  \begin{equation}
\alpha=\beta=0. \nonumber
\end{equation}
Then   $\varphi_1(a)\ne0$ and $ \varphi_N(a)\ne0$.\\
$\rm{(ii)}$ Suppose that  $$\alpha=0, \ \beta\ne0.$$
Then $\varphi_1(a), \varphi_N(a)$ cannot both be zero.  If moreover, $\lambda_1>0$ or $\lambda_N<0$, then we have that
 $\varphi_1(a)\ne0$ and $ \varphi_N(a)\ne0$.
\end{lemma}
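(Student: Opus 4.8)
The plan is to extract information about $\varphi_1(a)$ and $\varphi_N(a)$ from the asymptotic behavior of the Herglotz--Nevanlinna function $-1/(M_+(z,a)+M_-(z,a))$ recorded in Lemma \ref{lemma3.1}, namely the identity
\begin{equation} \nonumber
-\frac{1}{M_+(z,a)+M_-(z,a)} = \alpha z + \beta + G(z,a),
\end{equation}
together with the structural facts collected just before this lemma: whenever $\varphi_{i_0}(a)=0$ for an interior index, $M_+(z,a)+M_-(z,a)$ has a genuine pole at $\lambda_{i_0}$ (equivalently $\phi_+(\lambda_{i_0},a)=\phi_-(\lambda_{i_0},a)=0$ by \eqref{zero0}), and between consecutive eigenvalues there is exactly one pole. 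The key observation I would use is the sign/ordering constraint furnished by Theorem \ref{theorem5.9}: the signs of the eigenvalues are controlled, so the location of $0$ relative to the $\lambda_i$ is pinned down, and this interacts with the interlacing of poles and zeros of the Herglotz function.

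For part $\mathrm{(i)}$, where $\alpha=\beta=0$, the right-hand side reduces to $G(z,a)$ alone, so $-1/(M_++M_-)$ equals $G(z,a)=\sum_{i=1}^N \lambda_i^2|\varphi_i(a)|^2/(\lambda_i-z)$, a proper rational Herglotz function that vanishes at $z=\infty$ and whose number of zeros equals its number of poles minus one. First I would argue by contradiction: suppose $\varphi_1(a)=0$. Then the summand at $i=1$ drops out, so $\lambda_1$ is \emph{not} a pole of $G(z,a)$; but the interlacing structure of the remaining simple poles $\lambda_2<\cdots<\lambda_N$ forces the first zero of $G$ to lie to the right of $\lambda_2$, contradicting the requirement (from \eqref{eq3.2} with $\alpha=\beta=0$, i.e. \eqref{sumi1N}) that the function $M_++M_-$ have a pole below $\lambda_1$—equivalently that $-1/(M_++M_-)$ has a zero below $\lambda_2$. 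Concretely, I would count zeros versus poles of $G(z,a)$ and show that dropping the extreme term $i=1$ (or $i=N$) destroys the interlacing pattern \eqref{lambda1} that \eqref{M+0aM-0a} and the Herglotz property impose; the residue at $z=0$ being $-1/2$ (forcing $\mu_{j_0}=0$ to be a genuine pole) is what makes the bookkeeping rigid.

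For part $\mathrm{(ii)}$, where $\alpha=0$ but $\beta\neq0$, the function is $\beta+G(z,a)$, so $M_++M_-$ now has the same number of zeros and poles \eqref{zeta2}. The strategy is again a pole-counting/interlacing argument, but now I must show only that $\varphi_1(a),\varphi_N(a)$ cannot \emph{both} vanish (weaker than part (i)): if both were zero, the extreme terms $i=1$ and $i=N$ would drop from $G$, and the resulting Herglotz function would have too few poles to interlace with the $N$ zeros $\{\lambda_i\}$ while respecting the forced pole at $0$ and the value $\beta$ at infinity—I would derive a contradiction from comparing $\lim_{|z|\to\infty}(\beta+G(z,a))=\beta$ against the interlacing count. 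For the refinement under $\lambda_1>0$ (resp. $\lambda_N<0$), I would invoke the oscillation result encoded in Remark \ref{remark5.11}/Theorem \ref{theorem5.9}: the extreme eigenfunction $\varphi_1$ (resp. $\varphi_N$), being the lowest (resp. highest) in the definite part of the spectrum, cannot have a node at $a$, so $\varphi_1(a)\neq0$ and then the ``not both zero'' clause upgrades to $\varphi_N(a)\neq0$ as well.

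The main obstacle I anticipate is the careful zero/pole counting near $z=0$: because $0\notin\sigma(H)$ yet $0$ is always a pole of $M_++M_-$ (by \eqref{M+0aM-0a}), the index $j_0$ with $\lambda_{j_0}<0<\lambda_{j_0+1}$ shifts the interlacing pattern, and one must handle separately the subcases according to whether the missing extreme term sits on the positive or negative side of $0$. Keeping the residue signs ($\beta_i>0$) consistent with the Herglotz property throughout this case analysis, rather than the underlying measure-theoretic input, is where the proof will require the most care.
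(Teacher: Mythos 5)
Your part (i) is essentially the paper's own argument and is sound: from $\varphi_1(a)=0$ one gets $\phi_\pm(\lambda_1,a)=0$, hence $-1/(M_+(z,a)+M_-(z,a))$ vanishes at $\lambda_1$; but with $\alpha=\beta=0$ this function equals $G(z,a)$, whose poles all lie strictly above $\lambda_1$ and which, being a rational Herglotz function vanishing at infinity, has no zero below its smallest pole — contradiction (the slip of saying ``pole \emph{below} $\lambda_1$'' where you mean ``pole \emph{at} $\lambda_1$'' is harmless). The first claim of (ii) can also be salvaged from your sketch, although your phrase ``interlace with the $N$ zeros $\{\lambda_i\}$'' is wrong as stated: only the $\lambda_i$ with $\varphi_i(a)=0$ are zeros of $-1/(M_+(z,a)+M_-(z,a))$, while those with $\varphi_i(a)\neq0$ are its poles. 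The correct count, as in the paper, is that with $\alpha=0$, $\beta\neq0$ this function has equally many zeros and poles; if $\varphi_1(a)=\varphi_N(a)=0$ then all poles lie in $(\lambda_1,\lambda_N)$ while $\lambda_1,\lambda_N$ are zeros, and alternation then forces a pole in $(-\infty,\lambda_1)\cup(\lambda_N,+\infty)$, contradicting \eqref{eq3.2}.

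The genuine gap is in the refinement of (ii). You write that once $\lambda_1>0$ gives $\varphi_1(a)\neq0$ (correct, via Theorem \ref{theorem5.9}), ``the `not both zero' clause upgrades to $\varphi_N(a)\neq0$ as well.'' That is a non sequitur: the statement ``$\varphi_1(a)$ and $\varphi_N(a)$ are not both zero'' is vacuously satisfied as soon as $\varphi_1(a)\neq0$ and gives no information whatsoever about $\varphi_N(a)$. Nor can oscillation help directly, since $\varphi_N$ has $N-1$ zeros and may in principle vanish at $a$. The paper closes this step with a separate interlacing argument that your proposal is missing: suppose $\varphi_N(a)=0$, so $\lambda_N$ is a zero of $F(z)=-1/(M_+(z,a)+M_-(z,a))$. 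Since $\varphi_1(a)\neq0$, $\lambda_1$ is a pole of $F$; moreover $z=0$ is a zero of $F$ by \eqref{zpp}, and it is the smallest zero, lying below the smallest pole $\lambda_1>0$. Because $F$ has equally many zeros and poles and they strictly alternate, the pattern must then end with a pole, i.e.\ the largest of all zeros and poles is a pole; so a zero at $\lambda_N$ would require a pole of $F$ in $(\lambda_N,+\infty)$, which is impossible since every pole of $F$ is an eigenvalue. This counting argument, exploiting $\mu_1=0<\lambda_1$, is the piece you must supply for the conclusion $\varphi_N(a)\neq0$ to hold.
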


\begin{proof}
	Assume that there are $k$ many $\varphi_i(a)$ such that $\varphi_i(a)=0$.
Let us show (i) first.  Otherwise, for example, assume that $\varphi_1(a)=0$. From the proof of Lemma \ref{lemma3.11},
we know that
\begin{align} \label{zero2}
-\frac{1}{M_+(\lambda_{1},a)+M_-(\lambda_{1},a)}=0.
\end{align}
Since $\alpha=\beta=0$, by Lemma 3.1,  $-1/\left(M_+(z,a)+M_-(z,a)\right)$ has $N-k$ poles and $N-k-1$ zeros. We emphasize that $-1/({M_+(z,a)}+{M_-(z,a)})$ has one more pole than zeros. Combining with  \eqref{zero2}, one concludes that
 $-1/(M_+(z,a)+M_-(z,a))$ has one pole in $(-\infty, \lambda_1)$. This contradicts \eqref{eq3.2}.
	
	Now we are going to prove (ii). Otherwise, assume that
	$\varphi_1(a)=\varphi_N(a)=0$.  Then one has that
	\begin{align} \label{zero3}
	-\frac{1}{M_+(\lambda_{1},a)+M_-(\lambda_{1},a)}=-\frac{1}{M_+(\lambda_{N},a)+M_-(\lambda_{N},a)}=0.
	\end{align}
	Since $\alpha=0, \beta\ne0$, by Lemma 3.1,  we know that $-1/({M_+(z,a)}+{M_-(z,a)})$ has the same {{number}} of zeros and poles. Combining with \eqref{zero3}, $-1/(M_+(z,a)+M_-(z,a))$ has one pole in $(-\infty, \lambda_1) \cup (\lambda_N, +\infty) $. This contradicts \eqref{eq3.2}.
	
	It remains to establish that if moreover, $\lambda_1>0$ or $\lambda_N<0$, then
	$\varphi_1(a)\ne0, \varphi_N(a)\ne0$. We only consider the case that $\lambda_1>0$. By Theorem \ref{theorem5.9}, we know that $\varphi_1(a)\ne0$. We show that $ \varphi_N(a)\ne0$. Otherwise, we obtain that
	\begin{align} \label{zero4}
	-\frac{1}{M_+(\lambda_{N},a)+M_-(\lambda_{N},a)}=0.
	\end{align}
Since $\varphi_1(a)\ne0$, by \eqref{eq3.2},  $\lambda_1$ is a pole of $-1/({M_+(z,a)}+{M_-(z,a)})$. Using \eqref{eq3.2} and \eqref{eq3.4}, the smallest zero  and pole  of $-1/({M_+(z,a)}+{M_-(z,a)})$
 are $\mu_1=0$ and $\lambda_1$,  respectively, which
 satisfy
\begin{equation}\label{d}
\mu_1=0<\lambda_1.
\end{equation}
Note that $-1/({M_+(z,a)}+{M_-(z,a)})$ has the same {{number}} of zeros and poles. Then by \eqref{zero4} and \eqref{d}, $-1/(M_+(z,a)+M_-(z,a))$ has one pole in $(\lambda_N, +\infty)$. This contradicts \eqref{eq3.2}.
\end{proof}

{Let $\mathcal{F}$ be the collection of spectral data of inverse spectral family, which yield the given interior spectral data $\{\lambda_i, \varphi_i(a) \}_{i=1}^N$. The following theorems  tell us some properties of $\mathcal{F}$ including connectedness. Compared with the results in Section 3, the connectedness of $\mathcal{F}$ also depends on whether $\varphi_1(a)$ and $\varphi_N(a)$ are zero.  }

\begin{theorem} \label{theorem 3.14}
For given $a$, suppose that $\{\lambda_i, \varphi_i(a) \}_{i=1}^N$
is the interior spectral data for some $\omega, v$, and
\begin{equation}
\alpha=\beta=0. \nonumber
\end{equation}
If there are $k$ many $\varphi_i(a)$ such that $\varphi_i(a)=0$, then {infinitely many  $\omega, v$'s} yield the given interior spectral data $\{\lambda_i, \varphi_i(a) \}_{i=1}^N$.
$\mathcal{F}$ is a connected manifold of dimension $k$.
\end{theorem}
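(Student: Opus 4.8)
The plan is to use $\alpha=\beta=0$ to determine $M_+(z,a)+M_-(z,a)$ completely from the data, and then to count the genuine freedom in splitting this sum into its two summands. By Lemma~\ref{lemma3.1}, the hypothesis $\alpha=\beta=0$ reduces \eqref{eq3.2} to
\begin{equation}
-\frac{1}{M_+(z,a)+M_-(z,a)}=G(z,a)=\sum_{i:\,\varphi_i(a)\neq0}\frac{\lambda_i^2|\varphi_i(a)|^2}{\lambda_i-z},\nonumber
\end{equation}
so $M_+(z,a)+M_-(z,a)$ is fixed by $\{\lambda_i,\varphi_i(a)\}_{i=1}^N$. By Lemma~\ref{lemma3.13}(i) the vanishing indices satisfy $m_1,\dots,m_k\in\{2,\dots,N-1\}$, hence $\varphi_1(a),\varphi_N(a)\neq0$. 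First I would locate the poles of $M_+(z,a)+M_-(z,a)$, that is, the zeros of $G(z,a)$. Two families of zeros are forced to be \emph{common} poles of $M_+(z,a)$ and $M_-(z,a)$: the point $z=0$, split by \eqref{M+0aM-0a} into residue $-\tfrac12$ for each of $M_\pm(z,a)$; and each $\lambda_{m_j}$, since $\varphi_{m_j}(a)=0$ forces $\phi_+(\lambda_{m_j},a)=\phi_-(\lambda_{m_j},a)=0$ (as in the proof of Lemma~\ref{lemma3.11}), making $\lambda_{m_j}$ a pole of both $M_\pm(z,a)$ and a zero of $G(z,a)$. Since the data arise from some $\omega,v$, these $k+1$ distinct points sit among the zeros of $G(z,a)$, and the remaining zeros are \emph{ordinary} poles, each belonging to exactly one of $M_\pm(z,a)$.

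Next I would separate the poles. Every ordinary pole lies in a clean interval $(\lambda_i,\lambda_{i+1})$ with $\varphi_i(a),\varphi_{i+1}(a)\neq0$ and $\lambda_i\lambda_{i+1}>0$ (the interval straddling $0$ carries the common pole $z=0$, not an ordinary one). Exactly as in the proof of Theorem~\ref{theorem3.3}, Lemma~\ref{lemma3.3} then assigns such a pole to $M_+(z,a)$ or $M_-(z,a)$ according to ${\rm{sgn}}(\varphi_i(a)\varphi_{i+1}(a))$, which is part of the data. The polynomial part of $M_+(z,a)+M_-(z,a)$ is forced onto $M_+(z,a)$ because $M_-(z,a)\to0$ by \eqref{limM-zaoverz}, and the split at $z=0$ is fixed. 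Thus the only remaining freedom is the splitting of the fixed coefficient $r_{m_j}>0$ of $(\lambda_{m_j}-z)^{-1}$ in $M_+(z,a)+M_-(z,a)$ into positive parts $r_{m_j}^+$ for $M_+(z,a)$ and $r_{m_j}-r_{m_j}^+$ for $M_-(z,a)$. Both parts must be strictly positive so that $\lambda_{m_j}$ remains a pole of each $M_\pm(z,a)$ and reproduces $\varphi_{m_j}(a)=0$; this gives one parameter $r_{m_j}^+\in(0,r_{m_j})$ per vanishing index, i.e.\ $k$ parameters in total.

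For every choice $(r_{m_1}^+,\dots,r_{m_k}^+)\in\prod_{j=1}^k(0,r_{m_j})$, the resulting $M_+(z,a)$ and $M_-(z,a)$ are rational Herglotz--Nevanlinna functions with ${\rm{Res}}_{z=0}M_\pm(z,a)=-\tfrac12$ and $M_-(z,a)\to0$ at infinity, so Lemmas~\ref{lemma2.6} and~\ref{lemma2.7} yield a unique $\omega,v$; distinct splits give distinct $M_+(z,a)$ and hence, by the injectivity in Lemma~\ref{lemma2.6}, distinct $\omega,v$. It then remains to verify that each such $\omega,v$ returns the prescribed interior data. The eigenvalues are the poles of $G(z,a)$ together with the common poles $\lambda_{m_1},\dots,\lambda_{m_k}$ (where $\phi_\pm(\lambda_{m_j},a)=0$ forces $W(\lambda_{m_j})=0$), hence exactly $\{\lambda_i\}_{i=1}^N$; the values $|\varphi_i(a)|$ are recovered from the residues of $G(z,a)$ through \eqref{Resz=lambdai}; and $\varphi_{m_j}(a)=0$ holds by construction. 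The signs are the delicate point: following Theorem~\ref{theorem3.3}, I would anchor the signs of $\varphi_{j_0}(a),\varphi_{j_0+1}(a)$ at the indices straddling $0$ via Remark~\ref{remark5.11}, then propagate through the clean intervals using the pole assignments and across each vanishing index using the flip $\varphi_{m_j-1}(a)\varphi_{m_j+1}(a)<0$ of Lemma~\ref{lemma3.11}. Since these local rules are identical for the given and the reconstructed data, all signs agree, so $\mathcal{F}$ is exactly this $k$-parameter family.

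Finally, for the manifold statement I would show that
\begin{equation}
\Phi:\ \prod_{j=1}^k(0,r_{m_j})\longrightarrow\mathcal{F},\qquad (r_{m_1}^+,\dots,r_{m_k}^+)\longmapsto(\omega,v),\nonumber
\end{equation}
is a homeomorphism. It is a continuous bijection because the poles and residues of $M_\pm(z,a)$ depend continuously on the parameters and the continued-fraction reconstruction in Lemmas~\ref{lemma2.6} and~\ref{lemma2.7} is continuous; its inverse is continuous because $r_{m_j}^+=-{\rm{Res}}_{z=\lambda_{m_j}}M_+(z,a)$ can be read off continuously from $\omega,v$. As $\prod_{j=1}^k(0,r_{m_j})$ is a connected open box in $\mathbb{R}^k$, it follows that $\mathcal{F}$ is a connected manifold of dimension $k$, and since $k\ge1$ it contains infinitely many points. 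The step I expect to be the main obstacle is the sign verification above---guaranteeing that every residue split reproduces not only the eigenvalues and the numbers $|\varphi_i(a)|$ but the exact signs of the $\varphi_i(a)$---together with the continuity of $\Phi^{-1}$ needed to promote the bijection to a homeomorphism.
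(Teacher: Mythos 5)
Your construction coincides with the paper's: $\alpha=\beta=0$ fixes $M_+(z,a)+M_-(z,a)$; the sign rules of Lemma~\ref{lemma3.3} (as used in Theorem~\ref{theorem3.3}) pin down which non-common poles belong to $M_+(z,a)$ and which to $M_-(z,a)$; and the only remaining freedom is the splitting of the residue at each common pole $\lambda_{m_j}$ into two positive parts, giving $k$ parameters. This is exactly the paper's decomposition of the poles into $\{0\}\cup A\cup B\cup C$ with the splits $\beta_i=\beta_i^{(1)}+\beta_i^{(2)}$ over $\mu_i\in A$, and your sign-propagation argument for recovering the prescribed data mirrors what the paper delegates to Theorem~\ref{theorem3.18}. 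So the ``infinitely many $\omega,v$'s'' part of your proposal is sound and follows the paper's route.

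The gap is in the final claim that $\mathcal{F}$ is a connected manifold of dimension $k$. In the paper, $\mathcal{F}$ is by definition a set of \emph{spectral data}, i.e.\ tuples $(\lambda_1,\cdots,\lambda_N,\lambda_1\gamma_1^2,\cdots,\lambda_N\gamma_N^2)\in\mathbb{R}^{2N}$, not a set of pairs $(\omega,v)$. Your map $\Phi$ lands in the collection of $(\omega,v)$'s, a space carrying no declared topology, and both continuity assertions (continuity of the continued-fraction reconstruction of Lemmas~\ref{lemma2.6}--\ref{lemma2.7}, and continuity of reading $r_{m_j}^+$ back off from $\omega,v$) are stated rather than proved; without genuine inverse continuity a continuous bijection from an open box is not an embedding, and in any case this does not identify $\mathcal{F}$ inside $\mathbb{R}^{2N}$. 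The paper sidesteps all topology by computing the spectral data of every member of the family explicitly: the product formulas \eqref{phipmza} determine $\phi_\pm(z,a)$ and $W(z)$ from the eigenvalues and the sets $A,B,C$, so the norming constants at the nonvanishing indices are the fixed quantities \eqref{lambdajgammalambdaj}, while at each vanishing index the computation \eqref{beta1} expresses $\lambda_j\gamma_{\lambda_j}^2$ as an explicit function of the split $\beta_j^{(1)}$, sweeping all of $(0,+\infty)$ as $\beta_j^{(1)}$ ranges over $(0,\beta_j)$. Hence $\mathcal{F}$ is exactly a fixed point times $(0,+\infty)^k$ in the coordinates of $\mathbb{R}^{2N}$, manifestly a connected $k$-manifold. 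To close your argument you need this identification (the link between your residue splits and the norming constants); the parametrization by splits alone does not deliver the statement about $\mathcal{F}$ as the paper defines it.
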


\begin{proof}
Since $\alpha=\beta=0$, by \eqref{G} and Lemma 3.1, we know that
\begin{equation}
M_+(z,a)+M_-(z,a)=\gamma z+\zeta+\sum_{i=1}^{N-k-1}\frac{\beta_i}{\mu_i-z},
\end{equation}
where $\{\mu_i\}_{i=1}^{N-k-1}$ are the poles of ${M_+(z,a)}+{M_-(z,a)}$.
Denote the zeros of  ${M_+(z,a)}+{M_-(z,a)}$ by $\{\lambda_{l_i}\}_{i=1}^{N-k}$.  Clearly, $\{\mu_i\}_{i=1}^{N-k-1}$ and $\{\lambda_{l_i}\}_{i=1}^{N-k}$ are interlaced:
\begin{equation}
\lambda_{l_1}<\mu_1<\lambda_{l_2}<\cdots<\lambda_{l_{j_1}}<\mu_{j_1}=0<\lambda_{l_{j_1+1}}<\cdots<\lambda_{l_{N-k}}. \label{lambda111}
\end{equation}

We divide the $N-k-1$ poles of $M_+(z,a)+M_-(z,a)$ into four parts:  \\
(1)    0; \\
(2)   $A$ is the set of  zeros of both functions $\phi_+(z,a)$ and $\phi_-(z,a)$.  We mention that $A$ coincides with the set of $\lambda_i$ with $\varphi_i(a)=0$; \\
(3)   $B$ is the set of zeros of $\phi_+(z,a)$ alone; \\
(4)   $C$ is the set of zeros of $\phi_-(z,a)$ alone.

Following the {similar} steps in the proof of Theorem \ref{theorem3.3},
by \eqref{lambda111}, the sets $B$ and $C$ are uniquely determined by the signs of $\varphi_i(a) , i=1,\cdots,N$. Then we make the following choice:
for any $\mu_i\in A$, let $\beta_i^{(1)},\beta_i^{(2)}>0$ be arbitrary with
 \begin{equation}
 \beta_i=\beta_i^{(1)}+\beta_i^{(2)},
 \end{equation}
 and take
 \begin{align}
M_+(z,a)&=\gamma z+ \zeta-\frac{1}{2z}+\sum_{\mu_i\in B} \frac{\beta_i}{\mu_i-z}+ \sum_{\mu_i\in A}\frac{\beta_{i}^{(1)}}{\mu_{i}-z},  \label{m+za} \\
M_-(z,a)&=-\frac{1}{2z}+\sum_{\mu_i\in C} \frac{\beta_i}{\mu_i-z}+\sum_{\mu_i\in A}\frac{\beta_{i}^{(2)}}{\mu_{i}-z}.
\end{align}
Every such choice yields an acceptable $\omega$, $v$  (see Theorem \ref{theorem3.18} for the acceptance of $\omega, v$).

Notice that by \eqref{w(0)} and \eqref{phi_+(0,a)}, one has 
\begin{align}
W(z)=\prod_{\lambda_i\in \sigma(H)}(1-\frac{z}{\lambda_i}),
\end{align}
\begin{equation} \label{phipmza}
\phi_+(z,a)= e^{-\frac{a}{2}} \prod_{\mu_i\in A \cup B}(1-\frac{z}{\mu_i}), \phi_-(z,a)= e^{\frac{a}{2}} \prod_{v_i\in A \cup C}(1-\frac{z}{v_i}).
\end{equation}
Therefore,  $W(z)$ and $\phi_{\pm}(z,a)$  are uniquely determined by $\sigma(H)=\{\lambda_i\}_{i=1}^N$ and the sets $A, B$ and $C$.
For any $j$  with $\varphi_{j}(a)\ne0$, by \eqref{xishu}, \eqref{eq2.5} and   \eqref{phipmza}, we have
\begin{equation} \label{lambdajgammalambdaj}
{\lambda_j}\gamma_{\lambda_j}^2=e^{-a}\prod_{\lambda_i\in \sigma(H) \setminus \{\lambda_j\}}(1-\frac{\lambda_j}{\lambda_i})
\prod_{\mu_i\in B }(1-\frac{\lambda_j}{\mu_i})
\prod_{v_i\in C}(1-\frac{\lambda_j}{v_i})^{-1},
\end{equation}
which is uniquely determined by $\sigma(H)$ and the sets $ B, C$.

For any $j$ with $\varphi_j(a)=0$, by \eqref{eq3.4}, one has that
\begin{equation} \label{beta}
{\beta_j}={\rm{Res}}_{z=\lambda_j}\frac{W(z)}{z\phi_+(z,a)\phi_-(z,a)}=\frac{\dot{W}(\lambda_j)}{\lambda_j\dot{\phi}_+(\lambda_j,a)\dot{\phi}_-(\lambda_j,a)}>0.
\end{equation}
Hence, one obtains that
\begin{align}\label{sgn1}
\sgn \Big({\frac{\dot{W}(\lambda_j)}{\dot{\phi}_+(\lambda_j,a)}}\Big)
=\sgn \big(\lambda_j\dot{\phi}_-(\lambda_j,a)\big).
\end{align}
Letting $x=a$  in \eqref{Wzdefine}, and then differentiating with respect to $z$ at $z=\lambda_j$, one has
\begin{align}\label{dotphi}
\dot{\phi}_+(\lambda_j,a)\phi_-'(\lambda_j,a)
-\phi_+'(\lambda_j,a)\dot{\phi}_-(\lambda_j,a)=\dot{W}(\lambda_j).
\end{align}
By \eqref{xishu}, we know
\begin{align} \label{xishu2}
\phi_-'(\lambda_j,a)=c_{\lambda_j}\phi_+'(\lambda_j,a).
\end{align}
Substituting \eqref{xishu2} into \eqref{dotphi}, one can obtain
\begin{align} \label{111}
\phi_+'(\lambda_j,a)=\frac{\dot{W}(\lambda_j)}
{c_{\lambda_j}\dot{\phi}_+(\lambda_j,a)-\dot{\phi}_-(\lambda_j,a)}.
\end{align}
From \eqref{eq212}, \eqref{m+za} and \eqref{111}, we have
\begin{align}
-\beta_j^{(1)}=\frac{\phi_+'(\lambda_j,a)}{\lambda_j\dot{\phi}_+(\lambda_j,a)}=-\frac{\dot{W}(\lambda_j)}{\dot{\phi}_+(\lambda_j,a)}\frac{1}{\frac{\lambda_j^2\dot{W}(\lambda_j)\dot{\phi}_+(\lambda_j,a)}{\lambda_j\gamma_{\lambda_j}^2}+\lambda_j\dot{\phi}_-(\lambda_j,a)},  \nonumber
\end{align}
which implies
\begin{align} \label{beta1}
 \lambda_j\gamma_{\lambda_j}^2=\frac{\lambda_j^2\dot{W}(\lambda_j)\dot{\phi}_+(\lambda_j,a)}{\frac{\dot{W}(\lambda_j)}{\beta_j^{(1)}\dot{\phi}_+(\lambda_j,a)}-\lambda_j\dot{\phi}_-(\lambda_j,a)}.
\end{align}
Since we can choose any $\beta_j^{(1)}\in (0,\beta_j)$, by \eqref{beta}, \eqref{sgn1} and \eqref{beta1}, we conclude that
$\lambda_j\gamma_{\lambda_j}^2\in(0,+\infty)$ can be arbitrary.

Hence, we have that
\begin{align}
\mathcal{F}&=\{(\lambda_1,\cdots, \lambda_N, \lambda_1\gamma_1^2,\cdots, \lambda_N\gamma_N^2)\in \mathbb{R}^{2N}|\lambda_j \ {\rm{is \ given \ for \ any}}  \ j, \nonumber\\
& \lambda_j\gamma_j^2 \ { \rm{is \ given \ by \ \eqref{lambdajgammalambdaj}}} \ {\rm{for}} \ \varphi_j(a)\ne0, \lambda_j\gamma_j^2>0  \ {\rm{for}} \ \varphi_j(a)=0 \},  \nonumber
\end{align}
which is a connected manifold of dimension $k$.
\end{proof}

\begin{remark}
Note that there are $k$ many eigenvalues $\lambda_j \in \{\mu_i\}_{i=1}^{N-k-1} \setminus \{0\} $  with  $\varphi_j(a)=0$.
Hence we have that $N-k-2\ge k$, which means that
\begin{align}
N-2k-2\ge 0. \nonumber
\end{align}
\end{remark}

{From the proof of Thoerem \ref{theorem 3.14}, we can compute $\mathcal{F}$ explicitly, which is a manifold of dimension $k$. Then by the reconstruction method in \cite[Theorem 4.3]{eck2}, we can obtain all the $w, v$ which yield the given interior spectral data $\{\lambda_i, \varphi_i(a) \}_{i=1}^N$. } %Indeed, by virtue of the map 
%\begin{align}
%(w,v)\rightarrow (\lambda_1,\cdots, \lambda_N, \lambda_1\gamma_1^2,\cdots, \lambda_N\gamma_N^2) ,
%\end{align}
%the invesre spectral family can identified as $\mathcal{F}$.  

\begin{theorem}
For given $a$, suppose that $\{\lambda_i, \varphi_i(a) \}_{i=1}^N$
is the interior spectral data for some $\omega, v$, and
\begin{equation}
\alpha=0,  \ \beta\ne 0. \nonumber
\end{equation}
If there are $k$ many $\varphi_i(a)$ such that $\varphi_i(a)=0$, then  {infinitely many  $\omega, v$'s} yield the given interior spectral data $\{\lambda_i, \varphi_i(a) \}_{i=1}^N$.
 We have \\
 $\rm{(i)}$  if $\lambda_1>0$ or $\lambda_N<0$, then $\mathcal{F}$ is a connected manifold of dimension $k$;\\
 $\rm{(ii)}$ if for some {$j_0\in \{1,\cdots, N-1\}$},
 \begin{equation}
 \lambda_{j_0}<0<\lambda_{j_0+1},   \notag
 \end{equation}
 and  $\varphi_1(a)=0$ or $ \varphi_N(a)=0$, then $\mathcal{F}$ is a connected manifold of dimension $k$;\\
 $\rm{(iii)}$  if for some {$j_0\in \{1,\cdots, N-1\}$},
 \begin{equation}
 \lambda_{j_0}<0<\lambda_{j_0+1},   \notag
 \end{equation}
 and
 \begin{equation}
   \varphi_1(a)\ne0, \ \varphi_N(a)\ne0,
   \end{equation}
  then $\mathcal{F}$ is a collection of 2 disjoint manifolds, each of dimension $k$.
\end{theorem}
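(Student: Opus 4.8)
The plan is to run the arguments of Theorem \ref{theorem 3.14} and Theorem \ref{aaaa} in tandem, the new ingredient being the bookkeeping of a single \emph{extremal} pole whose fate is dictated by the three hypotheses. Setting $\alpha=0$ in Lemma \ref{lemma3.1} gives
\begin{equation}
-\frac{1}{M_+(z,a)+M_-(z,a)}=\beta+G(z,a),\nonumber
\end{equation}
which is completely determined by the interior spectral data; since $\beta\neq0$, the Herglotz-Nevanlinna function $M_+(z,a)+M_-(z,a)$ has the same number $N-k$ of zeros and poles (as in \eqref{zeta2}), its zeros being exactly the eigenvalues $\lambda_i$ with $\varphi_i(a)\neq0$. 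First I would classify the $N-k$ poles into four types: the fixed common pole $z=0$ (with residues $-\tfrac12$ each, by \eqref{M+0aM-0a}); the $k$ eigenvalues in $A:=\{\lambda_j:\varphi_j(a)=0\}$, which by \eqref{zero0} are common poles of $M_+(z,a)$ and $M_-(z,a)$ and hence contribute the free residue splitting $\beta_j=\beta_j^{(1)}+\beta_j^{(2)}$ with $\beta_j^{(1)},\beta_j^{(2)}>0$ exactly as in Theorem \ref{theorem 3.14}; the poles strictly between two consecutive zeros, which Lemma \ref{lemma3.3} assigns to $M_+(z,a)$ or $M_-(z,a)$ according to the signs of the $\varphi_i(a)$; and finally, by the interlacing count ($N-k-1$ of the gaps between zeros are interior, leaving one pole in an exterior gap), exactly one extremal pole lying beyond all the zeros on one side.

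The crux is to locate this extremal pole. Since $G(z,a)>0$ for $z$ below the smallest zero and $G(z,a)<0$ above the largest zero, the exterior solution of $\beta+G=0$ lies below the smallest zero when $\beta<0$ and above the largest zero when $\beta>0$; moreover $z=0$ is always a zero of $\beta+G$ because $\beta=-G(0,a)$. In case (i), Lemma \ref{lemma3.13}(ii) gives $\varphi_1(a),\varphi_N(a)\neq0$ and all eigenvalues of one sign, so $z=0$ itself is the extremal pole; being common and fixed it introduces no discrete ambiguity, and the only freedom is the $k$-dimensional splitting on $A$. In case (ii), say $\varphi_1(a)=0$, the eigenvalue $\lambda_1\in A$ is a pole lying below the smallest zero, so it must be the (unique) extremal pole and $\beta<0$; as an $A$-pole it is again common and split freely, so no discrete choice arises (the symmetric situation $\varphi_N(a)=0$ forces $\beta>0$, and Lemma \ref{lemma3.13}(ii) rules out both being zero). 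In both cases the reconstruction of $M_\pm(z,a)$ via Lemmas \ref{lemma2.6}, \ref{lemma2.7} and the acceptance in Theorem \ref{theorem3.18} yield, exactly as in Theorem \ref{theorem 3.14}, a family parametrized by $(\beta_j^{(1)})_{\lambda_j\in A}$, equivalently by the $k$ free norming constants $\lambda_j\gamma_{\lambda_j}^2\in(0,+\infty)$ through \eqref{beta1}; this is a connected manifold of dimension $k$, the remaining norming constants being fixed by \eqref{lambdajgammalambdaj}.

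For case (iii), $\varphi_1(a),\varphi_N(a)\neq0$ forces the smallest and largest zeros to be $\lambda_1$ and $\lambda_N$, so the extremal pole lies strictly beyond the spectrum and is neither $0$ nor an element of $A$. Being in an exterior gap it is not caught between two zeros, so Lemma \ref{lemma3.3} does not determine its assignment; it may be declared a pole of either $M_+(z,a)$ or $M_-(z,a)$, giving two admissible decompositions. Each decomposition, combined with the free splitting on $A$, produces a connected $k$-dimensional family as above; the two choices place the extremal pole into the set of zeros of $\phi_+(\cdot,a)$ alone or of $\phi_-(\cdot,a)$ alone, which changes the determined norming constants through the analogue of \eqref{lambdajgammalambdaj}, so the two families are disjoint. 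Hence $\mathcal{F}$ is the union of two disjoint manifolds, each of dimension $k$.

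The main obstacle I anticipate is the precise localization of the extremal pole and, in particular, the proof that under the hypothesis of case (ii) this pole coincides with the extremal $A$-pole $\lambda_1$ (resp.\ $\lambda_N$) rather than with a genuine runaway pole; this is what collapses the potential factor-of-two ambiguity and is exactly where the condition $\varphi_1(a)=0$ or $\varphi_N(a)=0$ is used. A secondary point requiring care is the disjointness in case (iii), namely that the two assignments of the extremal pole really give distinct norming-constant data and hence distinct $\omega,v$.
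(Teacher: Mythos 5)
Your proposal is correct and follows essentially the same route as the paper: the same four-way classification of the poles of $M_+(z,a)+M_-(z,a)$ (the pole at $0$, the common $A$-poles with free residue splitting, and the interior poles assigned by Lemma \ref{lemma3.3}), with the single extremal pole being $z=0$ in case (i), the $A$-pole $\lambda_1$ or $\lambda_N$ in case (ii), and a free pole beyond the spectrum admitting two assignments in case (iii). Your additional touches (locating the extremal pole via the sign of $\beta$, and verifying disjointness in (iii) through the norming-constant formula \eqref{lambdajgammalambdaj}) are correct refinements of details the paper handles by a without-loss-of-generality choice and by assertion, respectively.
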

\begin{proof}
	Since $\alpha=0,\beta\ne0$, by \eqref{G} and Lemma 3.1, we know that
	\begin{equation}
	M_+(z,a)+M_-(z,a)=\zeta+\sum_{i=1}^{N-k}\frac{\beta_i}{\mu_i-z},
	\end{equation}
	where $\{\mu_i\}_{i=1}^{N-k }$ are the poles of ${M_+(z,a)}+{M_-(z,a)}$. Note that there are $k$ many eigenvalues $\lambda_j \in \{\mu_i\}_{i=1}^{N-k} \setminus \{0\} $  with  $\varphi_j(a)=0$. Hence we have that $N-2k-1\ge 0$.
	
Let us show ${\rm{(i)}}$ first. We only consider the case that $\lambda_1>0$. Denote the zeros of  ${M_+(z,a)}+{M_-(z,a)}$ by $\{\lambda_{l_i}\}_{i=1}^{N-k}$. Then  one has	
	\begin{equation}
\mu_1=0<\lambda_{l_1}<\mu_2<\lambda_{l_2}<\cdots<\lambda_{l_{N-k-1}}<\mu_{N-k}<\lambda_{l_{N-k}}. \label{lambda112}
	\end{equation}
We have that $\lambda_{l_{N-k}}=\lambda_N$. Then the sets $B$ and $C$,  which are defined in the proof of Theorem \ref{theorem 3.14}, are uniquely determined by the signs of $\varphi_i(a),i=1,\cdots,N$.

	Now we are going to prove  (ii) and (iii). Without loss of generality, assume that
\begin{equation}
\lambda_{l_1}<\mu_1<\lambda_{l_2}<\cdots<\lambda_{l_{j_1}}<\mu_{j_1}=0<\lambda_{l_{j_1+1}}<\cdots<\lambda_{l_{N-k}}<\mu_{N-k}. \label{lambda113}
\end{equation}	

	If $\varphi_N(a)=0$, then $\mu_{N-k}=\lambda_N$.  Namely, $\mu_{N-k}\in A$. Therefore,  the sets $B$ and $C$ are uniquely determined by the signs of $\varphi_i(a), i=1,\cdots,N$. This yields (ii).
	
If $\varphi_N(a)\ne0$, then $\mu_{N-k}>\lambda_N=\lambda_{l_{N-k}}$. Namely, $\mu_{N-k}\notin A$.  Since  $\mu_{N-k}\in B \ {\rm{or}} \  C$, then we can obtain two disjoint manifolds of dimension $k$. This yields (iii).
\end{proof}

\begin{theorem} \label{theorem3.16}
For given $a$, suppose that $\{\lambda_i, \varphi_i(a) \}_{i=1}^N$
is the interior spectral data for some $\omega, v$, and
\begin{equation}
\alpha\ne 0. \nonumber
\end{equation}
Assume that $\lambda_1>0$ (or $\lambda_N<0$).
If there are $k$ many $\varphi_i(a)$ such that $\varphi_i(a)=0$, then  {infinitely many  $\omega, v$'s} yield the given interior spectral data $\{\lambda_i, \varphi_i(a) \}_{i=1}^N$.
 We have \\
 $\rm{(i)}$ if $\varphi_N(a) = 0$  (or $\varphi_1(a) = 0$),  then $\mathcal{F}$ is a connected manifold of dimension $k$; \\
 $\rm{(ii)}$ if $\varphi_N(a)\ne 0$  (or $\varphi_1(a)\ne 0$),  then $\mathcal{F}$ is a collection of 2 disjoint manifolds, each of dimension $k$.
\end{theorem}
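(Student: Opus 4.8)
The plan is to follow the scheme of Theorem \ref{theorem 3.14}, adapting it to the case $\alpha\neq0$, where the Herglotz-Nevanlinna function $M_+(z,a)+M_-(z,a)$ acquires one extra pole. By Lemma \ref{lemma3.1} with $\alpha\neq0$, the function $-1/(M_+(z,a)+M_-(z,a))=\alpha z+\beta+G(z,a)$ grows linearly at infinity, so $M_+(z,a)+M_-(z,a)$ decays to zero there and admits the representation
\[
M_+(z,a)+M_-(z,a)=\sum_{i=0}^{N-k}\frac{\beta_i}{\mu_i-z},\qquad \beta_i>0,
\]
with $N-k+1$ poles $\{\mu_i\}_{i=0}^{N-k}$ and $N-k$ zeros, the zeros being exactly the eigenvalues $\lambda_j$ with $\varphi_j(a)\neq0$; thus the poles exceed the zeros by one. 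First I would establish, using $\lambda_1>0$ together with $\varphi_1(a)\neq0$ (forced by Theorem \ref{theorem5.9}) and the fact that $z=0$ is always a pole by \eqref{M+0aM-0a}, that the poles and zeros interlace with $\mu_0=0$ the smallest pole and $\mu_{N-k}$ the largest, lying strictly to the right of the largest zero.

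Next I would partition the poles into the four classes of Theorem \ref{theorem 3.14}: $\{0\}$; the set $A=\{\lambda_j:\varphi_j(a)=0\}$ of common zeros of $\phi_+(z,a)$ and $\phi_-(z,a)$; the set $B$ of zeros of $\phi_+(z,a)$ alone; and the set $C$ of zeros of $\phi_-(z,a)$ alone. Exactly as in the proof of Theorem \ref{theorem3.3}, applying Lemma \ref{lemma3.3} together with the interlacing shows that the membership in $B$ or $C$ of each pole lying strictly between two consecutive zeros is forced by the sign pattern of the $\varphi_i(a)$, while each pole in $A$ is a common pole of both $M_+(z,a)$ and $M_-(z,a)$ whose residue splits as $\beta_i=\beta_i^{(1)}+\beta_i^{(2)}$ with $\beta_i^{(1)},\beta_i^{(2)}>0$ otherwise free. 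This splitting supplies exactly $k$ continuous degrees of freedom; by Lemmas \ref{lemma2.6} and \ref{lemma2.7} each admissible pair $(M_+,M_-)$ reconstructs a genuine $\omega,v$ (acceptability being guaranteed as in Theorem \ref{theorem3.18}), and the resulting family of norming constants $\{\lambda_j\gamma_{\lambda_j}^2\}$ is computed exactly as in Theorem \ref{theorem 3.14}, giving a manifold of dimension $k$.

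The decisive point is the fate of the extra pole $\mu_{N-k}$, and this is where conclusions $\rm{(i)}$ and $\rm{(ii)}$ diverge. If $\varphi_N(a)=0$, then $\lambda_N\in A$ is itself the largest pole, so $\mu_{N-k}=\lambda_N$ is a common pole carrying no sign ambiguity; all of $B$ and $C$ are then determined by the signs of the $\varphi_i(a)$, and the only freedom is the $k$-fold residue splitting on $A$, yielding a single connected manifold of dimension $k$, which is $\rm{(i)}$. If instead $\varphi_N(a)\neq0$, then $\lambda_N$ is the largest zero and $\mu_{N-k}>\lambda_N$ is not an eigenvalue, hence lies in $B\cup C$; since no eigenvalue lies to its right, no sign relation is available to decide whether $\mu_{N-k}$ is a pole of $M_+(z,a)$ (set $B$) or of $M_-(z,a)$ (set $C$), producing two distinct admissible assignments, each independently carrying the $k$-dimensional splitting freedom. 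Thus $\mathcal{F}$ splits into two disjoint manifolds of dimension $k$, which is $\rm{(ii)}$. The case $\lambda_N<0$ follows by the mirror-image argument (reflecting $x\mapsto-x$, which interchanges $M_+$ and $M_-$ and places the extra pole to the left of the spectrum), with $\varphi_1(a)$ playing the role of $\varphi_N(a)$.

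The main obstacle is the careful localization of the extra pole within the interlacing and the verification that, when $\varphi_N(a)\neq0$, it genuinely cannot be resolved by the available sign data because it lies outside the convex hull of the spectrum; this is precisely the mechanism converting the single $k$-dimensional family into two disjoint copies. Getting the sign bookkeeping right at the spectral edges, invoking the oscillation results of Theorem \ref{theorem5.9} and Remark \ref{remark5.11} to fix the signs forced there, is the delicate part.
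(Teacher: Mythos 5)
Your proposal is correct and follows essentially the same route as the paper: the representation of $M_+(z,a)+M_-(z,a)$ as a rational Herglotz-Nevanlinna function with $N-k+1$ poles and $N-k$ zeros, the interlacing with $\mu_0=0$ the smallest pole, the partition of poles into $\{0\}$, $A$, $B$, $C$ from Theorem \ref{theorem 3.14}, the $k$-dimensional residue-splitting freedom on $A$, and the dichotomy at the extra pole $\mu_{N-k}$ (forced into $A$ when $\varphi_N(a)=0$, otherwise an unresolvable binary choice between $B$ and $C$) exactly reproduce the paper's argument. Your additional details on the edge signs via Theorem \ref{theorem5.9} and Remark \ref{remark5.11}, and the mirror argument for $\lambda_N<0$, are consistent elaborations of steps the paper leaves implicit.
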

\begin{proof}
Since $\alpha\neq0$, by \eqref{G} and Lemma 3.1, there exist $\beta_i,\mu_i, i=0,\cdots,N$, (~determined by $\{\lambda_i, \varphi_i(a) \}_{i=1}^{N-k}$ ) so that
\begin{equation}
M_+(z,a)+M_-(z,a)=\sum_{i=0}^{N-k} \frac{\beta_i}{\mu_i-z},      \label{N+11}
\end{equation}
where $\{\mu_i\}_{i=0}^{N}$ are the poles of ${M_+(z,a)}+{M_-(z,a)}$.  Note that there are $k$ many eigenvalues $\lambda_j \in \{\mu_i\}_{i=1}^{N-k-1} \setminus \{0\} $  with  $\varphi_j(a)=0$. Hence we have $N-2k\ge 0$.

 Denote the zeros of  ${M_+(z,a)}+{M_-(z,a)}$ by $\{\lambda_{l_i}\}_{i=1}^{N-k}$. Then $\{\mu_i\}_{i=0}^{N-k}$ and $\{\lambda_{l_i}\}_{i=1}^{N-k}$ are interlaced:
	\begin{equation}
\mu_0=0<\lambda_{l_1}<\mu_1<\lambda_{l_2}<\cdots<\lambda_{l_{N-k-1}}<\mu_{N-k-1}<\lambda_{l_{N-k}}<\mu_{N-k}. \label{lambda114}
\end{equation}	

If $\varphi_N(a)=0$, then $\mu_{N-k}=\lambda_N$.  Namely, $\mu_{N-k}\in A$.  Here the set $A$ is defined in the proof of Theorem \ref{theorem 3.14}.  Then by \eqref{lambda114}, the sets $B$ and $C$ are uniquely determined by the signs of $\varphi_i(a), i=1,\cdots,N$. This yields (i).

If $\varphi_N(a)\ne0$, then $\mu_{N-k}>\lambda_N=\lambda_{l_{N-k}}$. Namely, $\mu_{N-k} \notin A$. Since  $\mu_{N-k}\in B \ {\rm{or}} \  C$,  we can obtain two disjoint  manifolds of dimension $k$. This yields (ii).
\end{proof}
\begin{theorem} \label{theorem3.17}
	For given $a$, suppose that $\{\lambda_i, \varphi_i(a) \}_{i=1}^N$
	is the interior spectral data for some $\omega, v$, and
	\begin{equation}
	\alpha\ne 0. \nonumber
	\end{equation}
	Assume that for some {$j_0\in \{1,\cdots, N-1\}$}, we have
	\begin{equation}
	\lambda_{j_0}<0<\lambda_{j_0+1}.   \label{eq1}
	\end{equation}
	If there are $k$ many $\varphi_i(a)$ such that $\varphi_i(a)=0$, then  {infinitely many  $\omega, v$'s} yield the given interior spectral data $\{\lambda_i, \varphi_i(a) \}_{i=1}^N$. We have\\
	$\rm{(i)}$ if  $\varphi_1(a)=0$ and $ \varphi_N(a)=0$, then $\mathcal{F}$ is a connected manifold of dimension $k$;\\
	$\rm{(ii)}$ if  $\varphi_1(a)=0$ or $ \varphi_N(a)=0$ (cannot both be zero), then $\mathcal{F}$ a collection of 2 disjoint manifolds, each of dimension $k$; \\
	$\rm{(iii)}$ if  $\varphi_1(a)\ne 0$ and $ \varphi_N(a)\ne 0$, then $\mathcal{F}$ is a collection of 4 disjoint manifolds, each of dimension $k$.
\end{theorem}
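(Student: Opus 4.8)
The plan is to follow the architecture of the proofs of Theorems \ref{theorem 3.14} and \ref{theorem3.16}, the only genuinely new feature being that the pole $z=0$ of $M_+(z,a)+M_-(z,a)$ now lies in the \emph{interior} of the spectrum rather than at an end. First I would invoke Lemma \ref{lemma3.1} with $\alpha\ne0$ together with \eqref{G} to write
\begin{equation}
M_+(z,a)+M_-(z,a)=\sum_{i=0}^{N-k}\frac{\beta_i}{\mu_i-z},\qquad \beta_i>0, \notag
\end{equation}
so that $M_++M_-$ has one more pole than zeros. The $N-k$ zeros are exactly the eigenvalues $\lambda_i$ with $\varphi_i(a)\ne0$, relabelled $\lambda_{l_1}<\cdots<\lambda_{l_{N-k}}$, and by interlacing
\begin{equation}
\mu_0<\lambda_{l_1}<\mu_1<\cdots<\lambda_{l_{N-k}}<\mu_{N-k}, \notag
\end{equation}
where $\mu_{j}=0$ for the unique index sitting in the gap $\lambda_{j_0}<0<\lambda_{j_0+1}$. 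Since $a\ne x_i$ (Lemma \ref{lemma3.2}), both $M_+$ and $M_-$ decay to $0$ at infinity, so each pole $\mu_i$ must be assigned integrally to $M_+$ or to $M_-$, the only exceptions being $z=0$ (split as $-1/(2z)$ into each via \eqref{M+0aM-0a}) and the common poles coming from eigenvalues with $\varphi_i(a)=0$.

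Next I would reuse the partition of poles from the proof of Theorem \ref{theorem 3.14} into $\{0\}$, the set $A$ of eigenvalues $\lambda_j$ with $\varphi_j(a)=0$ (common poles of $M_\pm$, with $|A|=k$), the set $B$ of poles of $M_+$ alone, and the set $C$ of poles of $M_-$ alone. Applying Lemma \ref{lemma3.3} exactly as before, the sign pattern of $\{\varphi_i(a)\}$ forces the membership in $B$ or $C$ of every pole $\mu_i$ flanked on both sides by zeros $\lambda_{l_i}$; thus all \emph{interior} poles are pinned down. The entire ambiguity is therefore localized to the two extreme poles $\mu_0$ and $\mu_{N-k}$, which lack a flanking zero on the outer side and so escape Lemma \ref{lemma3.3}.

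Here I would analyze the two ends separately, and this is precisely where the gap case departs from Theorem \ref{theorem3.16}: because $0$ is interior, \emph{neither} end is pinned by the pole at $0$. The left end satisfies $\mu_0=\lambda_1\in A$ exactly when $\varphi_1(a)=0$, in which case it is determined; if $\varphi_1(a)\ne0$ then $\mu_0<\lambda_1$ is a genuinely free pole, assignable to either $B$ or $C$, yielding two admissible alternatives. The same dichotomy, governed by $\varphi_N(a)$, holds at $\mu_{N-k}$. Combining the two independent binary choices gives exactly $1$, $2$, or $4$ combinatorial families according to whether both, one, or neither of $\varphi_1(a),\varphi_N(a)$ vanishes, which are cases (i), (ii), (iii). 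For each fixed family the residue $\beta_j$ at every common pole $\mu_j\in A$ may be split as $\beta_j=\beta_j^{(1)}+\beta_j^{(2)}$ with $\beta_j^{(1)}\in(0,\beta_j)$, and, exactly as in Theorem \ref{theorem 3.14}, this sweeps $\lambda_j\gamma_{\lambda_j}^2$ over all of $(0,+\infty)$ independently for the $k$ indices in $A$, after which Lemmas \ref{lemma2.6} and \ref{lemma2.7} reconstruct admissible $\omega,v$. Each family is thus a connected $k$-dimensional manifold (the image of the open box $\prod_{j}(0,\beta_j)$), and distinct combinatorial choices produce disjoint manifolds because they force distinct sets $B,C$, hence distinct zero sets $A\cup B$ and $A\cup C$ of $\phi_\pm(\cdot,a)$ through \eqref{phipmza}, hence distinct $M_\pm$.

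The step I expect to be the main obstacle is the rigorous endpoint analysis: one must verify that when $\varphi_1(a)\ne0$ (resp. $\varphi_N(a)\ne0$) \emph{both} assignments of the free pole $\mu_0$ (resp. $\mu_{N-k}$) to $B$ or $C$ genuinely yield functions $M_+,M_-$ that remain Herglotz--Nevanlinna, decay to $0$, and satisfy $\mathrm{Res}_{z=0}M_\pm=-1/2$, so that Lemmas \ref{lemma2.6} and \ref{lemma2.7} apply in every branch; and that the resulting families are truly disjoint rather than accidentally identified by a coincidence of reconstructed measures. The disjointness reduces to showing that $B$ and $C$ are spectral invariants of the reconstructed pair, which follows from \eqref{phipmza} since $A\cup B$ and $A\cup C$ are the zero sets of $\phi_+(\cdot,a)$ and $\phi_-(\cdot,a)$.
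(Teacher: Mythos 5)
Your proposal is correct and takes essentially the same approach as the paper: the paper's proof likewise starts from the decomposition \eqref{N+11} and the interlacing \eqref{lambda12}, reuses the sets $A$, $B$, $C$ and the residue-splitting construction from Theorems \ref{theorem 3.14} and \ref{theorem3.16}, and reduces the count of components to whether the extreme poles $\mu_0$ and $\mu_{N-k}$ lie in $A$ (equivalently, whether $\varphi_1(a)$ and $\varphi_N(a)$ vanish), each free extreme pole contributing one binary choice between $B$ and $C$. Your endpoint dichotomy, the $1/2/4$ bookkeeping, and the $k$-dimensional connected-manifold structure coming from splitting $\beta_j=\beta_j^{(1)}+\beta_j^{(2)}$ at the common poles are exactly the paper's mechanism.
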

\begin{proof}
 By \eqref{N+11} and \eqref{eq1}, we have that
\begin{equation}
\mu_0<\lambda_{l_1}<\cdots<\lambda_{l_{j_1}}<\mu_{j_1}=0<\lambda_{l_{j_1+1}}<\cdots<\lambda_{l_{N-k}}<\mu_{N-k}. \label{lambda12}
\end{equation}	

If $\varphi_1(a)=0$ and $ \varphi_N(a)=0$, then
\begin{equation} \nonumber
 \mu_0=\lambda_1, \ \mu_{N-k}=\lambda_N.
 \end{equation}
 Namely, $\mu_0, \mu_{N-k}\in A$. Therefore,  the sets $B$ and $C$ are uniquely determined by the signs of $\varphi_i(a), i=1,\cdots,N$. This yields (i).
 
Let us show (ii). We only consider the case that $\varphi_1(a)=0, \varphi_N(a)\ne 0$. Then
\begin{equation} \nonumber
\mu_0=\lambda_1, \ \mu_{N-k}>\lambda_N=\lambda_{l_{N-k}}.
\end{equation}
 Namely,  $\mu_0\in A$ and $\mu_{N-k}\notin A$.
 Since  $\mu_{N-k}\in B \ {\rm{or}} \  C$, we can obtain two disjoint  manifolds. This yields (ii).

If $\varphi_1(a)\ne0$ and $ \varphi_N(a)\ne0$, then
\begin{equation} \nonumber
\mu_0<\lambda_1=\lambda_{l_1}, \ \mu_{N-k}>\lambda_N=\lambda_{l_{N-k}}.
\end{equation} Namely,
$\mu_0,  \mu_{N-k}\notin A$. Since  $\mu_0, \mu_{N-k}\in B \ {\rm{or}} \  C$, {we can obtain four disjoint  manifolds.} This yields (iii).
\end{proof}

Now we discuss the existence of the interior inverse problem.

 \begin{theorem}\label{theorem3.18}
For given $a$, $\{\lambda_i,\alpha_i\}_{i=1}^N\subset\mathbb{R} $ is the interior spectral data for some $\omega,v$ at $a$, if and only if \\
$\rm{(i)}$ for any $j$ with $\alpha_j=0$, $\lambda_j$ is the zero of the function
\begin{align}  \label{Gzz}
G(z):=(1-\sum_{i=1}^N \alpha_i^2)z-\sum_{i=1}^N \lambda_i\alpha_i^2+\sum_{i=1}^N \frac{\lambda_i^2\alpha_i^2} {\lambda_i-z};
\end{align}
$\rm{(ii)}$ if $\alpha_j=0$ for some $j\ne1, N$,  then we have $\alpha_{j-1}\alpha_{j+1}<0$;\\
$\rm{(iii)} $  $\{\lambda_i,\alpha_i\}_{i=1}^N \in \mathcal{T} $,  {where $\mathcal{T} $ is defined in Section 3.}
\end{theorem}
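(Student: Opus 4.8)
The plan is to prove the two implications separately, throughout identifying $\alpha_i$ with the normalized eigenfunction value $\varphi_i(a)$ and observing that the function $G(z)$ of \eqref{Gzz} is exactly $-1/(M_+(z,a)+M_-(z,a))$ as furnished by Lemma \ref{lemma3.1}, with $\alpha=1-\sum_i\alpha_i^2$ and $\beta=-\sum_i\lambda_i\alpha_i^2$. A short Taylor expansion at the origin, using $\lambda_i^2/(\lambda_i-z)=\lambda_i+z+O(z^2)$, shows $G(z)=z+O(z^2)$, so $z=0$ is always a simple zero of $G$ and $\mathrm{Res}_{z=0}\bigl(-1/G\bigr)=-1$, matching $\mathrm{Res}_{z=0}(M_++M_-)=-1$ from \eqref{M+0aM-0a}.

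For necessity, suppose $\{\lambda_i,\alpha_i\}_{i=1}^N$ is the interior spectral data of some $\omega,v$. Condition $\rm{(iii)}$ is the necessity already recorded in Lemma \ref{theorem3.8}: Lemma \ref{lemma3.1} gives $\alpha\ge0$, hence $\sum_i\alpha_i^2\le1$, while Remark \ref{remark5.11} pins the positivity of $\varphi_{j_0}(a),\varphi_{j_0+1}(a)$ at the eigenvalue sign change, so $\{\lambda_i,\alpha_i\}_{i=1}^N\in\mathcal{T}$. Condition $\rm{(ii)}$ is precisely Lemma \ref{lemma3.11}. For $\rm{(i)}$, if $\alpha_j=\varphi_j(a)=0$ then \eqref{xishu} and \eqref{eq2.6} force $\phi_\pm(\lambda_j,a)=0$, whence by \eqref{eq3.4} we get $-1/(M_+(\lambda_j,a)+M_-(\lambda_j,a))=0$, i.e.\ $G(\lambda_j)=0$.

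For sufficiency, assume $\rm{(i)}$--$\rm{(iii)}$ and define $G(z)$ by \eqref{Gzz}. First I would verify that $G$ is a rational Herglotz-Nevanlinna function: its residues $\lambda_i^2\alpha_i^2$ at the poles $\lambda_i$ (with $\alpha_i\ne0$) are positive, and its leading coefficient $\alpha=1-\sum_i\alpha_i^2\ge0$ by $\rm{(iii)}$. Then $M_+(z,a)+M_-(z,a):=-1/G(z)$ is Herglotz-Nevanlinna with $\mathrm{Res}_{z=0}=-1$ and poles interlacing the $\lambda_i$; by $\rm{(i)}$ the numbers $\lambda_j$ with $\alpha_j=0$ are additional zeros of $G$, hence common poles of $M_\pm$. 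I would then distribute the poles exactly as in the proofs of Theorems \ref{theorem3.3}, \ref{aaaa}, \ref{theorem3.7} and \ref{theorem 3.14}--\ref{theorem3.17}: assign residue $-\tfrac{1}{2}$ at $z=0$ to each of $M_\pm$; place the polynomial part of $-1/G$ into $M_+$ so that $M_-$ vanishes at infinity as required by Lemma \ref{lemma2.7}; route each ordinary pole $\mu$ between consecutive $\lambda_j,\lambda_{j+1}$ (both with $\alpha\ne0$) to $M_+$ or $M_-$ according to the sign of $\alpha_j\alpha_{j+1}$ via Lemma \ref{lemma3.3}; and split the residue $\beta_j=\beta_j^{(1)}+\beta_j^{(2)}$ into positive pieces at each common pole $\lambda_j$ with $\alpha_j=0$. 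Lemmas \ref{lemma2.6} and \ref{lemma2.7} then reconstruct $\omega|_{[a,+\infty)},v|_{[a,+\infty)}$ and $\omega|_{(-\infty,a)},v|_{(-\infty,a)}$.

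It remains to confirm that the reconstructed $\omega,v$ realizes $\{\lambda_i,\alpha_i\}_{i=1}^N$. By construction $M_\pm(z,a)$ are the Weyl-Titchmarsh functions of $H_\pm(\omega,v,a)$, so Lemma \ref{lemma3.1} recovers the correct eigenvalues and the correct moduli $|\varphi_i(a)|=|\alpha_i|$; the signs are then fixed by Remark \ref{remark5.11} at the eigenvalues adjacent to $0$ and propagated to the remaining indices through the pole-assignment rule, exactly as in the closing argument of Theorem \ref{theorem3.3}. The main obstacle I anticipate is this consistency check at the common poles: one must verify that $\rm{(ii)}$ is precisely what makes the sign pattern forced by splitting a $\lambda_j$ with $\alpha_j=0$ compatible with the interlacing of the form \eqref{lambda111}, and that the arbitrary positive splitting $\beta_j=\beta_j^{(1)}+\beta_j^{(2)}$ keeps both $M_+$ and $M_-$ Herglotz-Nevanlinna. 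This is where the boundary behavior of $\varphi_1(a)$ and $\varphi_N(a)$ enters and where the bookkeeping is most delicate.
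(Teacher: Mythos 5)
Your proposal matches the paper's proof essentially step for step: necessity from Lemma \ref{lemma3.1}, Lemma \ref{lemma3.11} and Remark \ref{remark5.11}, and sufficiency by building $M_\pm$ from $-1/G$ with the same pole-assignment rule (according to the sign of $\alpha_j\alpha_{j+1}$), the residue splitting $\beta_j=\beta_j^{(1)}+\beta_j^{(2)}$ at the common poles coming from $\alpha_j=0$, and reconstruction through Lemmas \ref{lemma2.6} and \ref{lemma2.7}. The ``obstacle'' you flag at the end is resolved exactly as you anticipate: the paper combines condition (ii) with Lemma \ref{lemma3.11} to propagate the signs fixed by Remark \ref{remark5.11}, and the positive splitting of the residues automatically keeps both $M_+$ and $M_-$ Herglotz--Nevanlinna.
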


\begin{proof}
The necessity is obtained from Lemma \ref{lemma3.1}, Lemma \ref{lemma3.11} and Remark \ref{theorem5.9}.

Let us show the sufficiency. Assume that there are $k$ many $\alpha_i$ such that $\alpha_i=0$. If $k=0$, the sufficiency holds by Lemma \ref{theorem3.8}. We consider the case that $k>0$.
 From \eqref{sumalpha},  we know that $G(z)$ is a Herglotz-Nevanlinna function. Let
\begin{equation}
M(z):=-\frac{1}{G(z)}=M_+(z)+M_-(z), \label{definitionmz}
\end{equation}
where $M_+(z)$ and $M_-(z)$ will be determined later.  Then there exist $\gamma\ge0$, $\zeta$, $\beta_i,\mu_i, i=1,\cdots,N(k)$, so that
	\begin{equation}
	M(z)= \gamma z+\zeta+\sum_{i=1}^{N(k)}\frac{\beta_i}{\mu_i-z}.
	\end{equation}
 Let $A$ be the set of $\lambda_j$ with $\alpha_j=0$. By (i), we know that $A \subseteq \{\mu_i\}_{i=1}^{N(k)}$.

We make the following choice. 
 For  $\mu_i\ne0\in (\lambda_j, \lambda_{j+1})$ for some $j$, let $\mu_i$ be the pole of $M_{\pm}(z)$ if
 \begin{equation} \label{cc}
  \pm \alpha_{j}\alpha_{j+ 1}<0.
 \end{equation}
  For $\mu_i\in (-\infty,  \lambda_1)\cup (\lambda_N, +\infty)$ with $\mu_i\ne0$ (if exist), we can choose $\mu_i$ to be the pole of $M_+(z)$. Denote the above poles of $M_+(z)$ and $M_-(z)$ by $B$ and $C$, respectively.
 For any $ \mu_i \in  A$, let $\beta_i^{(1)},\beta_i^{(2)}>0$ satisfy
 \begin{equation}
 \beta_i=\beta_i^{(1)}+\beta_i^{(2)},
 \end{equation}
 and take
\begin{align}
M_+(z)&=\gamma z+ \zeta-\frac{1}{2z}+\sum_{\mu_i\in B} \frac{\beta_i}{\mu_i-z}+ \sum_{\mu_i\in A}\frac{\beta_{i}^{(1)}}{\mu_{i}-z}, \label{m+}\\
M_-(z)&=-\frac{1}{2z}+\sum_{\mu_i\in C} \frac{\beta_i}{\mu_i-z}+\sum_{\mu_i\in A}\frac{\beta_{i}^{(2)}}{\mu_{i}-z}.  \label{m-}
\end{align}
  Applying Lemma \ref{lemma2.6} and Lemma \ref{lemma2.7}, $M_+(z)$ and $M_-(z)$ uniquely determine $\omega|_{[a,+\infty)}, v|_{[a,+\infty)}$ and $\omega|_{(-\infty,a)}, v|_{(-\infty,a)}$, respectively. Hence $\omega$ and $v$ are uniquely determined.

	{ It remains to establish that the $\omega, v$ determined above has the interior spectral data $\{\lambda_i, \alpha_i \}_{i=1}^N$.} Note that the Weyl-Titchmarsh functions $M_{\pm}(z,a)$ of $H_{\pm}(\omega, v,a)$ satisfy
	\begin{equation}\label{m}
	M_{\pm}(z,a)=M_{\pm}(z). 
	\end{equation}
	  If $\lambda_j\in \{\lambda_i\}_{i=1}^{N} \setminus A$,  by \eqref{definitionmz}, \eqref{m+}, \eqref{m-} and \eqref{m}, $\lambda_j$ is a pole of $-1/(M_+(z,a)+M_-(z,a))$. Then by Lemma \ref{lemma3.1}, $\lambda_j$ is an eigenvalue of $H(\omega, v)$. If $\lambda_j \in A$, by \eqref{eq212} and \eqref{M-za}, we obtain $$\phi_+(\lambda_j,a)= \phi_-(\lambda_j,a)=0.$$ Then from \eqref{Wzdefine}, we know that $W(\lambda_j)=0$,  which implies that $\lambda_j$ is also an eigenvalue of $H(\omega, v)$.
Since $H(\omega,v)$ has no other eigenvalues, one concludes that $\{\lambda_i\}_{i=1}^{N}$ is the spectrum of $H(\omega,v)$.

	Now we show that for any $i= 1,\cdots, N$, one has  $\varphi_{i}(a)=\alpha_{i}$. We consider the case that $\lambda_1>0$.
  By \eqref{Gzz}, \eqref{definitionmz}, \eqref{m} and Lemma \ref{lemma3.1}, one has  $$\abs{\varphi_i(a)}=|\alpha_i|, i=1,\cdots,N.$$
  Therefore, it suffices to show that for any $i=1,\cdots, N$, we have
  \begin{align}\label{sgnequal1}
  {\rm{sgn}}(\varphi_i(a))={\rm{sgn}}(\alpha_i).
  \end{align} 
  Applying  Remark \ref{remark5.11}, one has that \eqref{sgnequal1} {holds} for $i=1$.
   If  $\mu_i\in (\lambda_j, \lambda_{j+1})$ for some $j$, we know that 
	 $\mu_{i}$ is a pole of $M_{\pm}(z,a)$ if and only if $\pm\varphi_{j}(a)\varphi_{j+1}(a)<0$. Combining with Lemma \ref{lemma3.11}, the signs of ${\varphi}_{i}(a)$ for $i\neq 1$ are uniquely determined by ${\rm{sgn}} ({\varphi}_{1}(a))$.  Therefore, by (ii) and the definition of $\mathcal{T}$, we can obtain that \eqref{sgnequal1} holds for any $i=1,\cdots,N$.

	   Following {the} similar steps,
	we can obtain that  $\varphi_{i}(a)=\alpha_{i}$ for any $i= 1,\cdots,N,$
	under the condition that $\lambda_N<0$ or 
	\begin{equation}
	\lambda_{j_0}<0<\lambda_{j_0+1}   \notag
	\end{equation}
	for some $j_0$.
	Therefore, we conclude that $\{\lambda_i,\alpha_i\}_{i=1}^N $ is the interior spectral data for some $\omega,v$ at $a$.
\end{proof}

\begin{theorem} \label{corollary 3.15}
For given $a$, suppose that $\{\lambda_i, \varphi_i(a) \}_{i=1}^N$
is the interior spectral data for some $\omega, v$,
 and there are $k$ many $\varphi_i(a)$ such that $\varphi_i(a)=0$. Then there are infinitely many $\tilde{\omega}, \tilde{v}$, such that
\begin{equation}
\tilde{\lambda}_i=\lambda_i,|\tilde{\varphi}_i(a)|=|\varphi_i(a)| , i=1,\cdots,N. \notag
 \end{equation}
We have  \\
 $\rm{(i)}$ if \begin{equation}
\alpha=\beta=0,  \nonumber
\end{equation}
  then the  spectral data of the inverse spectral family is a collection of $2^{N-2k-2}$ disjoint manifolds, each of  dimension $k$;\\
  $\rm{(ii)}$ if \begin{equation}
\alpha=0, \ \beta\ne0,  \nonumber
\end{equation}
  then the  spectral data of the inverse spectral family is a collection of $2^{N-2k-1}$ disjoint manifolds, each of dimension $k$;\\
  $\rm{(iii)}$ if \begin{equation}
\alpha\ne 0,     \nonumber
\end{equation}
  then the  spectral data of the inverse spectral family is a collection of $2^{N-2k}$ disjoint manifolds, each of dimension $k$.
\end{theorem}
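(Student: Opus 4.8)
The plan is to reduce the statement to a counting argument that generalises the proof of Theorem~\ref{corollary 3.10}, which is precisely the case $k=0$. Since $\tilde\lambda_i=\lambda_i$ and $|\tilde\varphi_i(a)|=|\varphi_i(a)|$ for every $i$, both the index set $A:=\{i:\varphi_i(a)=0\}$, with $|A|=k$, and the data $\{\lambda_i,|\varphi_i(a)|\}_{i=1}^N$ are fixed; in particular $\tilde\alpha=\alpha$ and $\tilde\beta=\beta$, so the three cases (i)--(iii) and the position of the eigenvalues relative to $0$ are common to every admissible $\tilde\omega,\tilde v$. Thus each admissible $\tilde\omega,\tilde v$ is specified by a choice of the signs $\sgn(\tilde\varphi_i(a))$, $i\notin A$, subject to the compatibility conditions of Theorem~\ref{theorem3.18}; and for each fixed admissible sign pattern the set of corresponding spectral data is, by Theorems~\ref{theorem 3.14}, \ref{theorem3.16}, \ref{theorem3.17} and the theorem treating the case $\alpha=0,\ \beta\neq0$, a disjoint union of $m$ manifolds, each of dimension $k$. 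Because $m$ depends only on $\alpha,\beta$ and on whether $1\in A$ and $N\in A$, it is the same for all admissible patterns sharing the fixed $A$. Hence the total number of manifolds equals $(\#\text{ admissible patterns})\times m$, and each has dimension $k$.

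I would then count the admissible patterns. There are $N-k$ nonzero indices. Condition (iii) of Theorem~\ref{theorem3.18} (membership in $\mathcal{T}$), via Remark~\ref{remark5.11} and Theorem~\ref{theorem5.9}, forces the sign of the eigenfunction value(s) adjacent to $0$ to be positive and these indices to be nonzero: one index is fixed when $\lambda_1>0$ or $\lambda_N<0$, and two indices ($j_0,j_0+1$) are fixed when $\lambda_{j_0}<0<\lambda_{j_0+1}$; write $f\in\{1,2\}$ for this number. Condition (ii) of Theorem~\ref{theorem3.18}, i.e.\ Lemma~\ref{lemma3.11}, imposes $\sgn(\tilde\varphi_{j-1}(a))=-\sgn(\tilde\varphi_{j+1}(a))$ for every interior zero $j\in A$ with $1<j<N$; let $c$ be the number of such interior zeros. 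By Lemma~\ref{lemma3.11} no two zeros are adjacent, so these relations connect nonzero indices differing by two; over $\mathbb{F}_2$ they form a forest and hence are independent, while no relation joins two forced indices (the forced indices are adjacent and the relations join indices at distance two). Therefore the $f+c$ constraints are consistent and independent, the number of free signs is $(N-k)-f-c$, and the number of admissible patterns is $2^{(N-k)-f-c}$.

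It then remains to verify the identity $2^{(N-k)-f-c}\cdot m=2^{N-2k-\delta}$ case by case, where $\delta=2,1,0$ in (i),(ii),(iii). For $\alpha=\beta=0$, Lemma~\ref{lemma3.13}(i) forces $1,N\notin A$ and (through the pole/zero count of Lemma~\ref{lemma3.1}) the interlacing configuration, so $(f,c,m)=(2,k,1)$, giving $2^{N-2k-2}$. For $\alpha=0,\ \beta\neq0$, Lemma~\ref{lemma3.13}(ii) excludes $1,N\in A$ simultaneously, and $(f,c,m)$ equals $(1,k,1)$ if $\lambda_1>0$ or $\lambda_N<0$, $(2,k-1,1)$ if exactly one of $\varphi_1(a),\varphi_N(a)$ vanishes, and $(2,k,2)$ if neither vanishes, each giving $2^{N-2k-1}$. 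For $\alpha\neq0$, the configurations of Theorems~\ref{theorem3.16} and \ref{theorem3.17} give $(f,c,m)$ equal to $(1,k-1,1)$, $(1,k,2)$, $(2,k-2,1)$, $(2,k-1,2)$ and $(2,k,4)$ according to how many of $\varphi_1(a),\varphi_N(a)$ are nonzero, each giving $2^{N-2k}$. Setting $k=0$ recovers Theorem~\ref{corollary 3.10}.

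The main obstacle is the bookkeeping of the second paragraph: one must check that the positivity constraints coming from $\mathcal{T}$ and the adjacency relations of Lemma~\ref{lemma3.11} are jointly consistent and independent---equivalently, that the associated $\mathbb{F}_2$-system has rank exactly $f+c$ with no forbidden edge joining two forced indices---so that the free-sign count is exactly $(N-k)-f-c$ and no admissible pattern is omitted or counted twice. Granting this, the remaining step is the uniform collapse of $(N-k)-f-c+\log_2 m$ to the single exponent $N-2k-\delta$ in each case, which is the routine arithmetic displayed above.
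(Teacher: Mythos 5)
Your proposal is correct and follows essentially the same route as the paper's proof: fix the zero set $A$, use Remark \ref{remark5.11} and Theorem \ref{theorem3.18} (via Lemmas \ref{lemma3.11} and \ref{lemma3.13}) to count the admissible sign patterns of $\tilde{\varphi}_i(a)$, and multiply by the number $m\in\{1,2,4\}$ of manifolds per pattern supplied by Theorems \ref{theorem 3.14}, \ref{theorem3.16}, \ref{theorem3.17} and the $\alpha=0,\beta\neq0$ theorem. The only difference is presentational: the paper enumerates the undetermined indices case by case (writing out case (iii) and invoking ``slight modification'' for (i)--(ii)), whereas you systematize the same count with the $(f,c,m)$ bookkeeping and an explicit $\mathbb{F}_2$ independence check, which matches the paper's figures in every case.
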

\begin{proof}
	For example, assume that $\varphi_{m_1}(a)= \cdots=\varphi_{m_k}(a)=0$. We first show that ${\rm{(iii)}}$ holds. \\
	Case 1: $\lambda_1>0$ or $\lambda_N<0$. We only consider the case that $\lambda_1>0$. By Remark \ref{remark5.11},  we have that $\tilde{\varphi}_1(a)>0$. If
	$ \varphi_N(a)\ne0$, namely $m_k<N$,
	 by Theorem \ref{theorem3.18}, for any $$i\in \{2,\cdots,N\}\setminus \{{m_1},{m_{1}+1},\cdots, {m_k},{m_{k}+1}\},$$ the sign of $\tilde{\varphi}_i(a)$ is not determined.
	We can choose the signs of $\tilde{\varphi}_1(a),\cdots, \tilde{\varphi}_N(a)$ in $2^{N-2k-1}$ ways.
	Every such choice yields 2 disjoint manifolds by (ii) of Theorem \ref{theorem3.16}.  Hence, we can determine $2^{N-2k}$ disjoint manifolds.
	
	 If	$ \varphi_N(a)=0$, namely $m_k=N$, by Theorem \ref{theorem3.18}, for any
	 $$i \in\{2,\cdots,N\}\setminus \{{m_1},{m_{1}+1},\cdots, {m_{k-1}},{m_{k-1}+1},{m_{k}} \},$$ the sign of  $\tilde{\varphi}_i(a)$ is not determined.
	  We can choose the signs of $\tilde{\varphi}_1(a),\cdots, \tilde{\varphi}_N(a)$ in $2^{N-2k}$ ways.
	  Every such choice yields a connected manifold by (i) of Theorem \ref{theorem3.16}.  Hence, we can determine $2^{N-2k}$ disjoint manifolds.\\
Case 2:  For some $j_0\in\{1,\cdots,N-1\}$, we have
	\begin{equation}
	\lambda_{j_0}<0<\lambda_{j_0+1}.   \notag
	\end{equation}
	By Remark \ref{remark5.11},  we know that  $\tilde{\varphi}_{j_0}(a), \tilde{\varphi}_{j_0+1}(a)>0$.
	
If $\varphi_1(a)= \varphi_N(a)=0$,
	by  Theorem \ref{theorem3.18}, for any
	$$i\in \{1,\cdots,N\}\setminus \{j_0,j_0+1,{m_1}, {m_{2}}, {m_{2}+1},\cdots, {m_{k-1}},{m_{k-1}+1},{m_k} \},$$
	the sign of $\tilde{\varphi}_{i}(a)$ is not determined.
 We can choose the signs of $\tilde{\varphi}_1(a),\cdots, \tilde{\varphi}_N(a)$ in $2^{N-2k}$ ways.
  Every such choice yields a connected manifold by (i) of Theorem \ref{theorem3.16}.  Hence, we can determine $2^{N-2k}$ disjoint manifolds.
	
	  Similarly, if $\varphi_1(a)\ne0, \varphi_N(a)=0$ or $\varphi_1(a)\ne0, \varphi_N(a)\ne0$, we can also determine
	 $2^{N-2k}$ disjoint manifolds.
	
	  Note that in both cases, the  spectral data of the inverse spectral family is a collection of $2^{N-2k}$ disjoint manifolds, each of  dimension $k$.
	 Applying the above
	 method with slight modification, we can show (i) and (ii).	
\end{proof}

\section{The Camassa-Holm Equation}

Let $(u,\mu)$ be a  multipeakon global conservative solution of the CH equation (see \cite{eck2}). Here $\mu$ is a non-negative Borel measure with {the} absolutely continuous part determined by
$u$ via
\begin{equation}
\mu_{ac}(B,t)= \int_B |u(x,t)|^2+|u_x(x,t)|^2 dx,\  t\in\mathbb{R},   \label{eq4.1}
\end{equation}
for each Borel set $B\in \mathcal{B}(\mathbb{R})$.   Define
\begin{align}
 \omega(x,t)&=u(x,t)-u_{xx}(x,t)=\sum_{i=1}^{n(t)} \omega_i(t)\delta_{x_i(t)},   \label{eq4.3} \\
 v(B,t)&=\mu(B,t)-\int_B |u(x,t)|^2+|u_x(x,t)|^2 dx,\  B\in \mathcal{B}(\mathbb{R}),    \label{eq4.4}
\end{align}%
for each $t\in \mathbb{R}$.

Consider the family (parametrized by time $t\in \mathbb{R})$ of spectral problems
\begin{equation}
-f''(x)+\frac{1}{4}f(x)=z \omega(x, t) f(x)+z^2 v(x,t) f(x),\  x\in \mathbb{R},         \label{eq4.5}
\end{equation}
with $\omega(x,t)$ and $v(x,t)$ defined by \eqref{eq4.3} and $\eqref{eq4.4}$, respectively.  We will denote all spectral quantities associated with this spectral problem as in the preceding sections but with an additional time parameter. In particular, the spectrum of \eqref{eq4.5} will be denoted by $\sigma(H(t))$. By\cite[Theorem 5.1]{eck2}, the pair $(u,\mu)$ is a global conservative multipeakon solution of the CH equation if and only if the problems \eqref{eq4.5} are isospectral with
\begin{equation}
\gamma_{\lambda_i}^2(t)=e^{-\frac{t-t_0}{2\lambda_i}}\gamma_{\lambda_i}^2(t_0),\  t\in \mathbb{R},\  \lambda_i\in \sigma(H(t_0)).    \label{eq4.6}
\end{equation}
Here $\gamma_{\lambda_i}^2(t)$ is defined by \eqref{eq2.4} with an additional time parameter.

Now in a position to state the trace formula.
\begin{theorem} \label{theorem 4.1}
 Suppose that $(u,\mu)$ is a global conservative multipeakon solution of the CH equation and let $\{\lambda_i\}_{i=1}^N$ be eigenvalues of \eqref{eq4.5} with $\omega(x,t)$ and $v(x,t)$  defined by \eqref{eq4.3} and \eqref{eq4.4}, respectively. For a fixed $t\in \mathbb{R}$, let $\{\varphi_i(x,t)\}_{i=1}^N$ be the corresponding normalized eigenfunctions given by \eqref{eq2.6}. Then one has
\begin{equation}
u(x,t)=\frac{1}{2}\sum_{i=1}^N \frac{|\varphi_i(x,t)|^2}{\lambda_i}.   \label{eq4.7}
\end{equation}
\end{theorem}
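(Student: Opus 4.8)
The plan is to compute the $z^2$ Taylor coefficient at $z=0$ of the diagonal Green's function $g(z,x):=z\phi_+(z,x)\phi_-(z,x)/W(z)$ in two different ways and match them: one computation produces $\sum_{i=1}^N|\varphi_i(x,t)|^2/\lambda_i$ and the other produces $2u(x,t)$, which is exactly \eqref{eq4.7}. Throughout I fix $t$ and suppress it, writing $\phi_\pm(z,x)$, $\varphi_i(x)$, $\omega=\sum_i\omega_i\delta_{x_i}$, etc. Since $u$ and each $\varphi_i$ are continuous in $x$, it suffices to prove the identity for $x\notin\{x_1,\dots,x_n\}$ and then pass to the limit.

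First I would read off the spectral side. Lemma \ref{lemma3.1} and its proof are phrased at the point $a$, but the derivation, in particular \eqref{eq3.4} and \eqref{Resz=lambdai}, is valid verbatim with $a$ replaced by an arbitrary $x\in\mathbb{R}$, because $\varphi_i(x)=\phi_+(\lambda_i,x)/\sqrt{\lambda_i\gamma_{\lambda_i}^2}$ is precisely the normalized eigenfunction. Hence $g(z,x)=-1/(M_+(z,x)+M_-(z,x))=\alpha(x)z+\beta(x)+\sum_{i=1}^N\lambda_i^2|\varphi_i(x)|^2/(\lambda_i-z)$, whose polynomial part $\alpha(x)z+\beta(x)$ is linear. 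Consequently the $z^2$-coefficient of the Taylor series of $g(z,x)$ at $z=0$ comes only from the Herglotz sum, and by \eqref{TaylorofGza} (with $a\to x$) it equals $\sum_{i=1}^N|\varphi_i(x)|^2/\lambda_i$.

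Second I would compute the same coefficient from the definition $g(z,x)=z\,\phi_+(z,x)\phi_-(z,x)/W(z)$. Writing $\psi_\pm(x):=\partial_z\phi_\pm(z,x)|_{z=0}$ and using $W(0)=1$, $\phi_\pm(0,x)=e^{\mp x/2}$ (see \eqref{w(0)} and \eqref{phi_+(0,a)}), one gets $[z^2]g(z,x)=\partial_z(\phi_+\phi_-)|_{z=0}-\dot W(0)=e^{x/2}\psi_+(x)+e^{-x/2}\psi_-(x)-\dot W(0)$. Differentiating \eqref{eq2.1} once in $z$ at $z=0$ annihilates the $z^2v$ term, so $\psi_\pm$ solves $-\psi_\pm''+\tfrac14\psi_\pm=\omega\,e^{\mp x/2}$ in the distributional sense, with $\psi_+$ vanishing to the right of $x_n$ and $\psi_-$ vanishing to the left of $x_1$; the jump condition \eqref{tiaoyue} shows that $\psi_\pm'(x_i-)-\psi_\pm'(x_i+)=\omega_i e^{\mp x_i/2}$ across each $x_i$. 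Solving this piecewise yields the explicit formulas $e^{x/2}\psi_+(x)=\sum_{x_i>x}\omega_i(e^{x-x_i}-1)$ and $e^{-x/2}\psi_-(x)=\sum_{x_i<x}\omega_i(e^{x_i-x}-1)$. A direct evaluation of \eqref{Wzdefine} at a point $x>x_n$, using these expressions, gives $\dot W(0)=-\sum_i\omega_i$. Adding the two and telescoping the constant terms,
\[
e^{x/2}\psi_+(x)+e^{-x/2}\psi_-(x)-\dot W(0)=\sum_i\omega_i e^{-|x-x_i|}-\sum_i\omega_i+\sum_i\omega_i=\sum_i\omega_i e^{-|x-x_i|}=2u(x),
\]
the last equality being the definition \eqref{eq4.3} of $\omega=u-u_{xx}$.

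Matching the two evaluations of $[z^2]g(z,x)$ gives $2u(x)=\sum_i|\varphi_i(x)|^2/\lambda_i$, which is \eqref{eq4.7}. I expect the main obstacle to be the bookkeeping in the second computation: producing the solutions $\psi_\pm$ with the correct jumps and decay and checking $\dot W(0)=-\sum_i\omega_i$. The one conceptual point worth emphasizing is that the singular part $v$ plays no role in either computation, since $\psi_\pm$ and $\dot W(0)$ are first-order-in-$z$ quantities while $v$ enters \eqref{eq2.1} only at order $z^2$; this is exactly why the pure-peakon recovery formula $u=\tfrac12\sum_i\omega_i e^{-|x-x_i|}$ persists in the conservative (indefinite) regime where $v\neq0$.
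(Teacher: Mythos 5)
Your proof is correct and follows essentially the same route as the paper: both compute the $z^2$-Taylor coefficient of the diagonal Green's function $z\phi_+(z,x)\phi_-(z,x)/W(z)$ at $z=0$ in two ways, using Lemma \ref{lemma3.1} together with \eqref{TaylorofGza} on the spectral side and the expansions of $\phi_\pm(z,x)$ and $W(z)$ on the other, then conclude via the convolution identity for $\omega=u-u_{xx}$. The only difference is that you derive $\dot{\phi}_\pm(0,x)$ and $\dot{W}(0)=-\sum_i\omega_i$ from scratch by solving the first-order-in-$z$ perturbation equation with its jump conditions, whereas the paper quotes these formulas directly from \cite{eck2}.
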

\begin{proof}
 For a fixed $t\in{\mathbb{R}}$, by Lemma \ref{lemma3.1},  for any  $x\in \mathbb{R}$, one has
 \begin{equation}
-\frac{1}{M_+(z,x)+M_-(z,x)}= \alpha z+\beta+G(z,x). \label{eq4.8}
\end{equation}
Here
\begin{align}
\alpha=1-\sum_{i=1}^N |\varphi_i(x)|^2,   \beta=-\sum_{i=1}^N \lambda_i|\varphi_i(x)|^2.  \nonumber
\end{align}
  Here we omit the $t$-dependance in functions $G(z,x), M(z,x), \varphi_i(x)$ for simplicity. From \cite[p. 901]{eck2}, we know that
  \begin{align}  \nonumber
  \dot{\phi}_{\pm}(0,x)&=e^{\mp\frac{x}{2}}\int_{I_{\pm(x)}}(e^{-|x-s|}-1)d\omega(s),\\
   \dot{W}(0)&=-\int_{\mathbb{R}} \omega(s)ds,  \nonumber
  \end{align}
 where $I_+(x)=[x,+\infty)$ and $I_-(x)=(-\infty,x)$.  Combining with  \eqref{w(0)} and \eqref{phi_+(0,a)}, one can obtain that
\begin{align}
\phi_{\pm}(z,x)&=e^{\mp\frac{x}{2}}+ze^{\mp\frac{x}{2}}\int_{I_\pm(x)}(e^{-|x-s|}-1)d\omega(s)+O(z^2),\ z\rightarrow0,\notag \\
W(z)&=1-z\int_{\mathbb{R}} \omega(s)ds+O(z^2),\ z\rightarrow0. \notag
\end{align}
Then from \eqref{eq3.4}, we have
\begin{equation}
-\frac{1}{M_+(z,x)+M_-(z,x)}=z+z^2\int_{\mathbb{R}}e^{-|x-s|}d\omega(s)+O(z^3), \ z\rightarrow0.   \label{eq4.9}
\end{equation}
By  \eqref{TaylorofGza} and \eqref{eq4.9}, one obtains
\begin{equation}
\int_{\mathbb{R}}e^{-|x-s|}d\omega(s)=\sum_{i=1}^N \frac{|\varphi_i(x)|^2}{\lambda_i}.      \label{Oz2}
\end{equation}
Then from  \eqref{eq4.3}, we obtain that
\begin{equation}
u(x)=\frac{1}{2}\int_{\mathbb{R}}e^{-|x-s|}d\omega(s)=\frac{1}{2}\sum_{i=1}^N \frac{|\varphi_i(x)|^2}{\lambda_i}. \nonumber
\end{equation}
\end{proof}

The Cauchy problem for the CH equation is solving $u(x,t)$ from $u(x,0)$.  Namely,  solve $u(x,t)$ by $x_i(0), \omega_i(0), i=1,\cdots,N$
in \eqref{eq1.3}. Similarly, \eqref{eq4.7} enlightens us to consider the following question: \\
\textbf{For given $x_0\in \mathbb{R}$, can we solve $u(x,t)$ from $\lambda_i, |\varphi_i(x_0,0)|, i=1,\cdots, N$?}
We have the following theorem.

\begin{theorem}
  Suppose that $(u,\mu)$ is a global conservative multipeakon solution of the CH equation.\\
 $\rm{(i)}$ For a fixed $x_0\in\mathbb{R}$, if $$\varphi_i(x_0,0)\neq0, i=1,\cdots, N, $$ then there exist finite global conservative multipeakon solutions   $(\tilde{u},\tilde{\mu})$, so that
\begin{equation}
\tilde{\lambda}_i=\lambda_i , |\tilde{\varphi}_i(x_0,0)|= |\varphi_i(x_0,0)|,  \ \  i=1,\cdots,N. \notag
\end{equation}
Namely,
\begin{equation}
\tilde{u}(x_0,0)=\frac{1}{2}\sum_{i=1}^N\frac{|\varphi_i(x_0,0)|^2}{\lambda_i}. \notag
\end{equation}
 $\rm{(ii)}$ For a fixed $x_0$, if $\varphi_i(x_0,0)=0$ for some $i$, then there exist infinite many global conservative multipeakon solutions $(\tilde{u},\tilde{\mu})$, so that
\begin{equation}
\tilde{\lambda}_i=\lambda_i, |\tilde{\varphi}_i(x_0,0)|= |\varphi_i(x_0,0)|,  \ \  i=1,\cdots,N. \notag
\end{equation}
Namely,
\begin{equation}
\tilde{u}(x_0,0)=\frac{1}{2}\sum_{i=1}^N\frac{|\varphi_i(x_0,0)|^2}{\lambda_i}. \notag
\end{equation}
\end{theorem}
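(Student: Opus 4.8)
The plan is to reduce this theorem to the interior inverse spectral results of Sections 3 and 4 together with the isospectral evolution \eqref{eq4.6} and the trace formula \eqref{eq4.7}. The key structural fact I would invoke first is the one-to-one correspondence between global conservative multipeakon solutions and the pairs $(\omega(\cdot,0),v(\cdot,0))$ of the form \eqref{eq2.111}: by \cite[Theorem 5.1]{eck2} such a solution is completely determined by its spectral data at $t=0$ through the norming constant evolution \eqref{eq4.6}, while by \cite[Theorem 4.3]{eck2} the spectral data $\{\lambda_i,\lambda_i\gamma_{\lambda_i}^2\}_{i=1}^N$ determine $(\omega,v)$ and conversely. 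Consequently, counting the solutions $(\tilde u,\tilde\mu)$ with $\tilde\lambda_i=\lambda_i$ and $|\tilde\varphi_i(x_0,0)|=|\varphi_i(x_0,0)|$ amounts to counting the admissible pairs $(\tilde\omega(\cdot,0),\tilde v(\cdot,0))$ whose interior spectral data at $a=x_0$ equal $\{\lambda_i,|\varphi_i(x_0,0)|\}_{i=1}^N$.

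Second, I would translate the hypotheses into this interior-inverse-problem language and apply the corresponding counting theorems at $t=0$, $a=x_0$. In case $\rm{(i)}$, where $\varphi_i(x_0,0)\neq0$ for every $i$, Theorem \ref{corollary 3.10} gives that the number of admissible $(\tilde\omega,\tilde v)$ is $2^{N-2}$, $2^{N-1}$ or $2^N$ according to whether $\alpha=\beta=0$, $\alpha=0\neq\beta$, or $\alpha\neq0$; in every case this is finite, so there are only finitely many solutions $(\tilde u,\tilde\mu)$. In case $\rm{(ii)}$, where $\varphi_i(x_0,0)=0$ for some $i$, one has $k\geq1$, and Theorem \ref{corollary 3.15} shows that the inverse spectral family is a finite union of manifolds of dimension $k\geq1$; each such manifold contains infinitely many points, whence there are infinitely many admissible $(\tilde\omega,\tilde v)$ and therefore infinitely many solutions.

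Third, the displayed value of $\tilde u(x_0,0)$ follows at once from the trace formula and, notably, is the same for all the solutions just counted, since it depends only on the data $\{\lambda_i,|\varphi_i(x_0,0)|\}$. Applying Theorem \ref{theorem 4.1} to $(\tilde u,\tilde\mu)$ at $x=x_0$, $t=0$ and substituting $\tilde\lambda_i=\lambda_i$, $|\tilde\varphi_i(x_0,0)|=|\varphi_i(x_0,0)|$ yields
\begin{equation}
\tilde u(x_0,0)=\frac{1}{2}\sum_{i=1}^N\frac{|\tilde\varphi_i(x_0,0)|^2}{\tilde\lambda_i}=\frac{1}{2}\sum_{i=1}^N\frac{|\varphi_i(x_0,0)|^2}{\lambda_i}. \nonumber
\end{equation}

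The main obstacle I anticipate is the lifting claim in the first step: that every admissible pair $(\tilde\omega(\cdot,0),\tilde v(\cdot,0))$ produced by the static reconstruction genuinely arises as the time-$0$ slice of a global conservative multipeakon solution, rather than merely as a static spectral configuration. Here one must check that the reconstructed measures have the admissible form \eqref{eq2.111} with $v_i\geq0$ and $|\omega_i|+v_i>0$ (which the reconstruction in Lemmas \ref{lemma2.6} and \ref{lemma2.7} and the existence criterion in Theorem \ref{theorem3.18} guarantee), and that the associated spectral data $\{\lambda_i,\gamma_{\lambda_i}^2(0)\}$ evolved by \eqref{eq4.6} define a consistent isospectral family in the sense of \cite[Theorem 5.1]{eck2}. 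For this one uses that $0\notin\sigma(H)$, so that \eqref{eq4.6} is well defined and the trace formula may legitimately divide by $\lambda_i$. Once this lifting is established, the finite/infinite dichotomy and the value of $\tilde u(x_0,0)$ are immediate.
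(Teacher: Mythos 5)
Your proposal is correct and follows essentially the same route as the paper, which proves (i) by combining Theorem \ref{corollary 3.10} with the trace formula of Theorem \ref{theorem 4.1}, and (ii) by combining Theorem \ref{corollary 3.15} with Theorem \ref{theorem 4.1}. The only difference is that you spell out the lifting step (that each reconstructed pair $(\tilde\omega,\tilde v)$ is the $t=0$ slice of a genuine global conservative multipeakon solution via \cite[Theorem 5.1]{eck2} and \eqref{eq4.6}), which the paper leaves implicit; this is a worthwhile clarification but not a different argument.
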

\begin{proof}
	By Theorem \ref{corollary 3.10} and Theorem \ref{theorem 4.1}, we have \rm{(i)}. 	Applying Theorem \ref{corollary 3.15} and Theorem \ref{theorem 4.1}, we obtain (ii).
\end{proof}

Denote by
\begin{equation}
\lambda^{\star}= \min\{|\lambda_i|: i=1,\cdots,N\}.   \label{definitionA}
\end{equation}
Combining  Lemma \ref{lemma3.1}, Lemma \ref{lemma3.2} and Theorem \ref{theorem 4.1}, we obtain the supremum of $|u(x,t)|$. This supremum only depends on $\lambda^{\star}$.

\begin{theorem} Assume that $(u,\mu)$ is a global conservative multipeakon solution of the CH equation. Let $\{\lambda_i\}_{i=1}^N$ be eigenvalues of \eqref{eq4.5} with $\omega(x,t)$ and $v(x,t)$ defined by \eqref{eq4.3} and \eqref{eq4.4}, respectively. Then we have
\begin{equation}
\sup \{ |u(x,t)|: x, t \in \mathbb{R} \}= \frac{1}{2\lambda^{\star}}. \label{sup|u(x,t)|}
\end{equation}
In particular, $|u(x,t)|$ obtains the supremum at some $(x,t)\in \mathbb{R}^2$ if and only if $N=1$.
\end{theorem}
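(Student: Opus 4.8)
The plan is to read off everything from the trace formula and the sign constraint on $\alpha$. Writing $p_i:=|\varphi_i(x,t)|^2\ge 0$, Lemma \ref{lemma3.1} gives $\sum_{i=1}^N p_i=1-\alpha\le 1$, and since $0\notin\sigma(H)$ we have $\lambda^{\star}>0$ with $|\lambda_i|\ge\lambda^{\star}$ for every $i$. Hence from the trace formula \eqref{eq4.7} I would estimate
\begin{equation}
|u(x,t)|=\frac12\Big|\sum_{i=1}^N\frac{p_i}{\lambda_i}\Big|\le\frac12\sum_{i=1}^N\frac{p_i}{|\lambda_i|}\le\frac{1}{2\lambda^{\star}}\sum_{i=1}^N p_i\le\frac{1}{2\lambda^{\star}},\nonumber
\end{equation}
which already yields the upper bound $\sup|u|\le\frac{1}{2\lambda^{\star}}$ for all $N$.

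Next I would analyze the equality case in the chain above. Equality forces, in order: all numbers $p_i/\lambda_i$ with $p_i>0$ to share a single sign; $p_i>0$ only when $|\lambda_i|=\lambda^{\star}$; and $\sum_i p_i=1$. Because the $\lambda_i$ are distinct, this is possible only if there is one index $i_0$ with $|\lambda_{i_0}|=\lambda^{\star}$ and $p_{i_0}=|\varphi_{i_0}(x,t)|^2=1$, which (as $\sum p_i\le1$) automatically forces $\varphi_j(x,t)=0$ for all $j\ne i_0$ and $\alpha=0$. For $N=1$ this is realized: the explicit single-peakon computation of the Example in Section 3 gives $\varphi_1(x)=e^{-|x-x_1|/2}$, so $|\varphi_1(x_1,t)|=1$ at the peak and $|u(x_1,t)|=\frac{1}{2|\lambda_1|}=\frac{1}{2\lambda^{\star}}$; thus the supremum is attained when $N=1$.

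For $N\ge 2$ I would prove the value $\frac{1}{2\lambda^{\star}}$ is never attained. Suppose it were, at $(x_*,t_*)$. By the equality analysis $\alpha=0$ and $\beta=-\lambda_{i_0}\ne0$, so Lemma \ref{lemma3.1} together with \eqref{G} gives
\begin{equation}
-\frac{1}{M_+(z,x_*)+M_-(z,x_*)}=\beta+G(z,x_*)=-\lambda_{i_0}+\frac{\lambda_{i_0}^2}{\lambda_{i_0}-z}=\frac{\lambda_{i_0}z}{\lambda_{i_0}-z},\nonumber
\end{equation}
whose only zero is $z=0$; hence $M_+(\cdot,x_*)+M_-(\cdot,x_*)$ has exactly one pole, located at $z=0$. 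On the other hand, since $N\ge2$ there is some $j\ne i_0$, and $\varphi_j(x_*,t_*)=0$ means $\phi_+(\lambda_j,x_*)=\phi_-(\lambda_j,x_*)=0$, so $\lambda_j$ is a pole of each of the Herglotz--Nevanlinna functions $M_\pm(\cdot,x_*)$ with negative residue; therefore $\lambda_j\ (\ne0)$ is a genuine pole of $M_++M_-$, contradicting the uniqueness of the pole at $0$. (Equivalently, this is the pole-counting bound $N-2k-1\ge0$ of Section 4 applied with $k=N-1$.) Hence $|u|<\frac{1}{2\lambda^{\star}}$ everywhere, which settles the ``in particular'' claim once the supremum value is identified.

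It remains to show that for $N\ge2$ the supremum still equals $\frac{1}{2\lambda^{\star}}$, i.e. is approached. Here I would invoke the isospectral evolution \eqref{eq4.6}: as $t\to\pm\infty$ the solution separates into $N$ asymptotically free peakons, and near the peak carrying the eigenvalue $\lambda_{i_0}$ with $|\lambda_{i_0}|=\lambda^{\star}$ one has $|\varphi_{i_0}(x,t)|^2\to1$ and $|\varphi_j(x,t)|^2\to0$ for $j\ne i_0$, so by \eqref{eq4.7} the value of $u$ at that peak tends to $\frac{1}{2\lambda_{i_0}}$, whence $|u|\to\frac{1}{2\lambda^{\star}}$; combined with the upper bound this gives $\sup|u|=\frac{1}{2\lambda^{\star}}$. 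I expect this last step to be the main obstacle: the upper bound, the equality analysis, and the non-attainment contradiction are all elementary given the results already proved, whereas making the long-time decoupling of the normalized eigenfunctions quantitative from \eqref{eq4.6}---so that the weights $p_i$ genuinely concentrate on the index $i_0$---is the delicate part.
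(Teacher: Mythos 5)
Your upper bound is exactly the paper's (trace formula \eqref{eq4.7} plus $\sum_i|\varphi_i(x,t)|^2\le1$ from Lemma \ref{lemma3.1}), but your non-attainment argument for $N\ge2$ is a genuinely different route, and a cleaner one. The paper proceeds by case analysis: $N=2$ is handled via Lemma \ref{lemma3.2} together with a direct argument that $\varphi_i(x_j(t),t)\neq0$, and $N\ge3$ is split into two further cases according to the signs of the eigenvalues, invoking the oscillation results of Section 6 (Corollary \ref{corollary5.10} and Theorem \ref{theorem5.9}). Your equality analysis (equality forces one index $i_0$ with $|\varphi_{i_0}|^2=1$ and all other $\varphi_j$ vanishing, since distinct eigenvalues of minimal modulus would contribute terms of opposite sign) followed by the pole count for $M_++M_-$ disposes of every $N\ge2$ at once, using only Lemma \ref{lemma3.1}, the Herglotz property of $M_\pm$, and the identity $\phi_\pm(\lambda_j,x_*)=0$ at a vanishing eigenfunction; your computation $M_+(z,x_*)+M_-(z,x_*)=\tfrac{1}{\lambda_{i_0}}-\tfrac{1}{z}$ is correct and even consistent with \eqref{M+0aM-0a}. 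This buys a unified argument that bypasses the oscillation theory entirely for this theorem.

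The genuine gap is the step you yourself flag: proving that for $N\ge2$ the value $\tfrac{1}{2\lambda^{\star}}$ is still the supremum, i.e.\ is approached. Your plan to extract the concentration $|\varphi_{i_0}(x,t)|^2\to1$, $|\varphi_j(x,t)|^2\to0$ from the isospectral evolution \eqref{eq4.6} would require establishing the long-time decoupling of the normalized eigenfunctions, which is proved nowhere in the paper and is indeed substantial work; as written, your proof is incomplete at exactly this point. Note, however, that the paper does not derive this from \eqref{eq4.6} either: it closes the step by citing equation (6.5) of \cite{bea}, which supplies the long-time asymptotics of multipeakon solutions (separated peakons of heights $1/2\lambda_i$ in the present normalization). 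So the missing piece is fillable by citation rather than by the quantitative analysis you anticipate; with that reference inserted, your argument becomes a complete, and in its middle part simpler, proof of the theorem.
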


\begin{proof}
By \eqref{eq4.7}, \eqref{definitionA} and Lemma \ref{lemma3.1}, we have
\begin{align}
|u(x,t)| &\le \frac{1}{2}\sum_{i=1}^N\frac{|\varphi_i(x,t)|^2}{|\lambda_i|} \\ \notag
& \le\frac{1}{2\lambda^{\star}}\sum_{i=1}^N|\varphi_i(x,t)|^2 \le \frac{1}{2\lambda^{\star}}.    \notag
\end{align}
Hence, $1/2\lambda^{\star}$ is a upper bound for $|u(x,t)|$. Combining with $(6.5)$ in \cite{bea}, one obtains that \eqref{sup|u(x,t)|} holds  (in the normalization used in this paper).

If $N=1$, for any $t\in\mathbb{R}$, $|\varphi_1(x_1(t)),t)|=1$ by Lemma \ref{lemma3.2}. Then by \eqref{eq4.7}, one has $$|u(x_1(t),t)|=\frac{1}{2\lambda^{\star}}.$$
Therefore for any $t\in\mathbb{R}$, $|u(x,t)|$ obtains the supremum at the point $(x_1(t),t)$.

 If $N=2$,  for any $t\in \mathbb{R}$, if $x\neq x_j(t), j=1, 2,$ one has $$\sum_{i=1}^2 |\varphi_i(x,t )|^2\ < 1$$ by Lemma \ref{lemma3.2}. Then we obtain that for
 $x\neq x_j(t), j=1, 2,$
\begin{align}
|u(x,t)|\le\frac{1}{2\lambda^{\star}}\sum_{i=1}^2|\varphi_i(x,t)|^2 < \frac{1}{2\lambda^{\star}}.  \label{|u(x,t)|1}
\end{align}
 Note that
 \begin{equation}
 \varphi_i(x_j(t), t)\neq0, \quad    i,j=1,2.  \label{neq0}
 \end{equation}
  Otherwise, for example, assume that
  \begin{equation}
  \varphi_1(x_1(t), t)=0.  \label{varphi1(x)}
  \end{equation}
  	 Since for $x\le x_1(t)$, $\varphi_1(x,t)=Ae^{\frac{x}{2}}$ for some constant $A$.  By \eqref{varphi1(x)}, one has $A=0$. Then from \eqref{lianxu}, \eqref{tiaoyue},   for any  $x\in \mathbb{R}$,  one obtains $\varphi_1(x,t)=0$. This is in  contradiction {with} the fact that $\varphi_1(x,t)$ is an eigenfunction. By \eqref{neq0}, for $j=1,2$,
\begin{align}
|u(x_j(t),t)|<\frac{1}{2\lambda^{\star}}\sum_{i=1}^2|\varphi_i(x_j(t),t)|^2 = \frac{1}{2\lambda^{\star}}. \label{|u(x,t)|2}
\end{align}
Hence from \eqref{|u(x,t)|1} and \eqref{|u(x,t)|2}, we conclude that if $N=2$, $|u(x,t)|$ cannot obtain the supremum at some $(x,t)\in \mathbb{R}^2$.

{If $N \geq 3$, we need to consider the following two cases. \\
Case 1. $\lambda_1>0$ or $\lambda_N<0$.
For any $t\in \mathbb{R}$, by Corollary \ref{corollary5.10},  we have
\begin{align}
|u(x,t)|<\frac{1}{2\lambda^{\star}}\sum_{i=1}^N|\varphi_i(x,t)|^2 \leq \frac{1}{2\lambda^{\star}}. \label{ccc}
\end{align}
Case 2. For some $j_0\in\{1,\cdots,N-1\}$, we have
\begin{equation}
\lambda_{j_0}<0<\lambda_{j_0+1}.   \label{c}
\end{equation}
 Then by Theorem \ref{theorem5.9} one has that
\begin{equation} \notag
\Big|\frac{|\varphi_{j_0}(x,t)|^2}{2\lambda_{j_0}}+\frac{|\varphi_{j_0+1}(x,t)|^2}{2\lambda_{j_0+1}}\Big|<\frac{|\varphi_{j_0}(x,t)|^2+|\varphi_{j_0+1}(x,t)|^2}{2\lambda^{\star}}.
\end{equation}
This implies that \eqref{ccc} holds.
Therefore, if $N\ge 3$, then $|u(x,t)|$ cannot obtain the supremum at some $(x,t)\in \mathbb{R}^2$.} 

 From all the above,  we conclude that $|u(x,t)|$ obtains the supremum at some $(x,t)\in \mathbb{R}^2$ if and only if $N=1$.
\end{proof}
If $N=1$, corresponding to the single peakon, $|u(x,t)|$ obtains the supremum.  If $u$ is a multipeakon solution with $N\geq 2$,   $|u(x,t)|\le 1/2\lambda^{\star}$ means other peakons have a negative effect on this highest (deepest) peakon which has a height (depth) of $1/2\lambda^{\star}$. $|u(x,t)|$ cannot obtain the supremum  implies that  as $t$ becomes large, the negative effect becomes smaller but never disappear.

\section{Oscillation of eigenfunctions}
In this section, we give a proof of the oscillation theorem for problem \eqref{eq2.1}. The approach we are taking in this section follows in part the one in \cite{ma}.  But we take advantage of the property of Herglotz-Nevanlinna function  and greatly simplify the proof.  

{In this part, if an eigenvalue $\lambda$ is positive (or negative), we denote it by $\lambda^+$ (or $\lambda^-$).}

%In this part, we denote $\lambda_i^+$ by a positive eigenvalue, and $\lambda_i^-$ a negative eigenvalue.

For $x_i=1,\cdots, n-1$, one has
\begin{align}
\phi_{+}'(z,x_i+)&= \frac{1}{2\sinh\left(\frac{ x_{i+1}-x_i}{2}\right)}\phi_{+}(z,x_{i+1})-\frac{1}{2}\coth\left(\frac{x_{i+1}-x_i}{2}\right)\phi_{+}(z, x_i). \label{phi}
\end{align}
Indeed,  for any $i=1,\cdots,n-1$, there exists $A, B\in \mathbb{R}$, so that
 \begin{equation} \label{phi(z,x)}
  \phi_+(z,x)=Ae^{\frac{x}{2}}+Be^{-\frac{x}{2}},\  x_i\le x\le x_{i+1}.
  \end{equation}
   Then we have
   \begin{equation}  \nonumber
   \left\{
   \begin{aligned}
   Ae^{\frac{x_i}{2}}+Be^{-\frac{x_i}{2}}&=\phi_+(z,x_i),\\
   \frac{1}{2}Ae^{\frac{x_i}{2}}-\frac{1}{2}Be^{-\frac{x_i}{2}}&=\phi'_+(z,x_i+). \\
   \end{aligned}
   \right.
   \end{equation}
   As a consequence,  one obtains
   \begin{equation} \label{AB}
   A=\frac{1}{2}e^{-\frac{x_i}{2}}(\phi_+(z,x_i)+2\phi'_+(z,x_i+)),
  B=\frac{1}{2}e^{\frac{x_i}{2}}(\phi_+(z,x_i)-2\phi'_+(z,x_i+)).
   \end{equation}
   Substituting \eqref{AB} into \eqref{phi(z,x)} and let $x=x_{i+1}, $ we obtain
   \begin{align} \label{phi(z,x_{i+1}}
   \phi_+(z,x_{i+1})=&\frac{1}{2}e^{\frac{x_{i+1}-x_i}{2}}(\phi_+(z,x_i)+2\phi'_+(z,x_i+))\nonumber\\
   &+\frac{1}{2}e^{-\frac{x_{i+1}-x_i}{2}}(\phi_+(z,x_i)-2\phi'_+(z,x_i+)).
   \end{align}
  Therefore, \eqref{phi} holds.
 Similarly, for $i=2,\cdots,n$,
\begin{align}
 \phi_{+}'(z, x_i-)&=-\frac{1}{2\sinh\left(\frac{
 x_i-x_{i-1}}{2}\right)}\phi_{+}(z, x_{i-1})+\frac{1}{2}\coth\left(\frac{x_{i}-x_{i-1}}{2}\right)\phi_{+}(z, x_{i}).  \label{phi+}
\end{align}

Denote by
 \begin{equation}
 \phi_i(z)=\phi_{+}(z,x_{n-i}), i=1,\cdots,n, \label{definitionvarphii}
 \end{equation}
and let
 \begin{align}
 a_i&=\frac{1}{2\sinh\left(\frac{x_{n-i+1}-x_{n-i}}{2}\right)}   \label{ai}, i=1,\cdots, n-1, \\
 b_i&=\frac{1}{2}\left(\coth\left(\frac{x_{n-i+1}-x_{n-i}}{2}\right)
 +\coth\left(\frac{x_{n-i}-x_{n-i-1}}{2}\right)\right), i=0,\cdots, n-1. \label{bi}
 \end{align}
 Here  $x_0:=-\infty$ and $x_{n+1}:=+\infty$.
{By \eqref{eq2.222}   and \eqref{definitionvarphii}, we have
 \begin{align}
 \phi_0(z)&=e^{-\frac{x_n}{2}}.  \label{phi1}
 \end{align}
Letting $i=n$ in \eqref{phi+}, then substituting equations \eqref{phi+} and  $$\phi_+'(z,\lambda_n+)=-\frac{1}{2}e^{-\frac{x_n}{2}}$$ into
 \eqref{tiaoyue}, we can obtain
 \begin{align}
 -a_1\phi_1(z)&+b_0\phi_0(z)=(z^2v_{n}+z\omega_n)\phi_0(z).   \label{phi2}
 \end{align}
 Substituting \eqref{phi} and \eqref{phi+} into \eqref{tiaoyue} with $i=n-j+1$,  one has that for $ j=2,\cdots, n-1$,
 \begin{align}
 -a_{j}\phi_{j}(z)+b_{j-1}\phi_{j-1}(z)-a_{j-1}\phi_{j-2}(z)=(z^2v_{n-j+1}+z\omega_{n-j+1})\phi_{j-1}(z).  \label{phi3}
 \end{align}
 Therefore, one can obtain $\phi_j(z),j=0,\cdots, n-1,$ from \eqref{phi1}, \eqref{phi2} and \eqref{phi3}.
 
Recall that 
 $\lambda$ is an eigenvalue of $H$ if and only if there exists $A\in \mathbb{R}$, so that  $\phi_+(\lambda,x)=Ae^{\frac{x}{2}}$ for $x\le x_1$. Namely, 
\begin{equation} \label{z}
\phi_+'(\lambda,x_1-)=\frac{1}{2}\phi_+(\lambda,x_1).
\end{equation} 
Substituting \eqref{z} and \eqref{phi} into \eqref{tiaoyue} with $i=1$, we can obtain
\begin{equation} \nonumber
b_{n-1}\phi_{n-1}(\lambda)-a_{n-1}\phi_{n-2}(\lambda)=(\lambda^2v_{1}+\lambda\omega_{1})\phi_{n-1}(\lambda).   
\end{equation}
Therefore}, $\lambda$ is an eigenvalue of $H$ if and only if $\lambda$ is a root  of
\begin{equation}
b_{n-1}\phi_{n-1}(z)-a_{n-1}\phi_{n-2}(z)=(z^2v_{1}+z\omega_{1})\phi_{n-1}(z).   \label{b_{n-1}}
\end{equation}

 {For any $i\in\{1,\cdots, n\}$, define 
 	\begin{align} \nonumber
 	i_v&= \#\{j\in\{1,\cdots, i\}|v_{n-j+1}\ne0 \},\\
 	i_+&=\#\{j\in\{1,\cdots, i\}| v_{n-j+1}=0, \omega_{n-j+1}>0\},\\
 	i_-&=\#\{j\in\{1,\cdots, i\}| v_{n-j+1}=0, \omega_{n-j+1}<0\}.
 	\end{align}
 	For any $j$ with $v_{n-j+1}\ne 0$ and $j_v=k$,
 define }
 \begin{equation} \label{y}
 \varphi_k(z):=z\sqrt{v_{n-j+1}}\phi_{j-1}(z).
 \end{equation}
   Denote by
 \begin{equation}\nonumber
 y=(\phi_0,\cdots,\phi_{n-1},\varphi_1,\cdots,\varphi_{n_v})^T.
 \end{equation}
 Then by \eqref{phi2}, \eqref{phi3}, \eqref{b_{n-1}} and \eqref{y}, we can rewrite the spectral problem $H$ in the matrix form
 \begin{equation} \label{jy}
  Jy=zDy.
 \end{equation}
Here
\begin{align} \nonumber
J=\left(
\begin{array}{cc}
J_{11} & J_{12} \\
J_{21} & J_{22} \\
\end{array}
\right),
\end{align}
$J_{11}$ is an $n\times n$ matrix with
\begin{align}\nonumber
J_{11}=\left(
\begin{array}{ccccccccc}
b_0    & -a_1    & 0    & \cdots & 0          \\
-a_1    & b_1    & -a_2  & \cdots & 0       \\
0      & -a_2    & b_2  & \cdots & 0        \\
\cdots &\cdots  &\cdots& \cdots & \cdots    \\
0      & 0      & 0    & -a_{n-1}& b_{n-1}
\end{array}
\right),
\end{align}
$J_{22}$ is an $n_v\times n_v $ identity matrix, $J_{12}$  is an $n\times n_v$ zero matrix, $J_{21}$  is an $n_v\times n$ zero matrix. $D$ is a symmetric matrix
with
\begin{align}  \label{definitionDD}
D=\left(
\begin{array}{cc}
D_{11} & D_{12} \\
D_{21} & D_{22} \\
\end{array}
\right),
\end{align}
 $D_{11}$ is an $n\times n$ matrix with
\begin{align} \nonumber
D_{11}=\left(
\begin{array}{ccccccccc}
\omega_n    & 0    & 0    & \cdots & 0          \\
0    & \omega_{n-1}   & 0  & \cdots & 0       \\
0      & 0    & \omega_{n-2}  & \cdots & 0        \\
\cdots &\cdots  &\cdots& \cdots & \cdots    \\
0      & 0      & 0    & 0 & \omega_{1}
\end{array}
\right),
\end{align}
$D_{22}$ is an $n_v\times n_v $ zero matrix, $D_{12}$  is an $n\times n_v$  matrix with entries
\begin{equation} \nonumber
d_{jk}(D_{12})=\left\{
\begin{aligned}
\sqrt{v_{n-j+1}}, \quad j_v=k,  \\
0, \quad \rm{otherwise}. \\
\end{aligned}
\right.
\end{equation}
Define
\begin{align} \nonumber
J_{i}=\left(
\begin{array}{cc}
J_{11,i} & J_{12,i} \\
J_{21,i} & J_{22,i} \\
\end{array}
\right),
\end{align}
where $J_{11,i}$ is an $i$-$th$ principal submatrix of $J_{11}$,
$J_{22,i}$ is an $i_v\times i_v $ identity matrix, $J_{12,i}$  is an $i\times i_v$ zero matrix, $J_{21,i}$  is an $i_v\times i$ zero matrix. Define the symmetric matrix
\begin{align}  \label{definitionD}
D_i=\left(
\begin{array}{cc}
D_{11,i} & D_{12,i} \\
D_{21,i} & D_{22,i} \\
\end{array}
\right),
\end{align} where $D_{11,i}$ is an $i$-$th$ principal submatrix of $D_{11}$,
$D_{22,i}$ is an $i_v\times i_v $ zero matrix, $D_{12,i}$  is an $i\times i_v$  matrix with entries
\begin{equation} \nonumber
d_{jk}(D_{12,i})=\left\{
\begin{aligned}
\sqrt{v_{n-j+1}}, \quad j_v=k,  \\
0, \quad \rm{otherwise}. \\
\end{aligned}
\right.
\end{equation}

Let $Q_i(z)=\det (J_i-zD_i)$ and suppose that
{\begin{equation}
Q_0(z)=1. \label{q0}
\end{equation}}
  Then from \eqref{jy}, after some calculations, one has
\begin{align}
Q_1(z)&=b_0-\omega_n z-v_n z^2;     \label{q1}\\
Q_{i}(z)&=(b_{i-1}-\omega_{n-i+1} z-v_{n-i+1} z^2)Q_{i-1}(z)-a_{i-1}^2Q_{i-2}(z), \  i=2,\cdots, n. \label{qi=1}
\end{align}

\begin{lemma}  \label{lemma5.1}	
	For any $i=1,\cdots, n$, $Q_{i-1}(z)$ and  $Q_{i}(z)$ have no common zeros.
\end{lemma}
\begin{proof}
	Otherwise, if there exists $z=\lambda_0$ so that $Q_{i-1}(\lambda_0)=0$ and  $Q_{i}(\lambda_0)=0$. By \eqref{qi=1}, one has  $Q_{i-2}(\lambda_0)=0$. Then we can obtain that $Q_{i-3}(\lambda_0)=\cdots=Q_0(\lambda_0)=0, $
	which contradicts $\eqref{q0}$.
	\end{proof}

\begin{lemma} \label{lemma5.3}
For any $i=1,\cdots, n$, the zeros of $Q_i(z)$ are real and simple. Moreover,  $Q_i(z)$ has $i_v+i_+$ positive zeros and $i_v+i_-$ negative zeros.
\end{lemma}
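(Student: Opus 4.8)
The plan is to work entirely with the matrix formulation $Q_i(z)=\det(J_i-zD_i)$ from \eqref{jy}, extract reality and the sign count from the \emph{inertia} of $D_i$ via Sylvester's law of inertia, and obtain simplicity from a multiplicity argument. The key structural input will be that the Jacobi block $J_{11,i}$ is positive definite.

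First I would prove that each principal submatrix $J_{11,i}$ is symmetric positive definite. Since $J_{11,i}$ is tridiagonal with positive diagonal entries $b_k$ and off-diagonal entries $-a_k$, it suffices to check strict diagonal dominance. Writing $u=\tfrac{x_{n-k+1}-x_{n-k}}{2}>0$ and using the elementary inequality $\coth u-\tfrac{1}{\sinh u}=\tfrac{\cosh u-1}{\sinh u}>0$ (with $\coth(+\infty)=1$ accounting for the two boundary rows), definitions \eqref{ai} and \eqref{bi} give $b_k>a_k+a_{k+1}$ at every interior row and $b_k>a_k$ at the two end rows. Hence $J_{11,i}\succ0$, and therefore $J_i=\mathrm{diag}(J_{11,i},I_{i_v})\succ0$.

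Second, since $J_i\succ0$ I can factor $J_i-zD_i=J_i^{1/2}(I-zS)J_i^{1/2}$ with the symmetric matrix $S:=J_i^{-1/2}D_iJ_i^{-1/2}$, so that $Q_i(z)=\det(J_i)\,\det(I-zS)$. Thus the zeros of $Q_i$ are precisely the numbers $1/\mu$, where $\mu$ runs over the nonzero eigenvalues of $S$; in particular every zero is real. Because $S=J_i^{-1/2}D_iJ_i^{-1/2}$ is congruent to $D_i$, Sylvester's law shows that $S$ and $D_i$ share the same numbers of positive, negative and zero eigenvalues. To compute the inertia of $D_i$ I would permute the coordinates so that each auxiliary variable is placed next to the unique $\phi_{j-1}$ it couples to; this exhibits $D_i$ as a direct sum of $2\times2$ blocks $\left(\begin{smallmatrix}\omega_{n-j+1}&\sqrt{v_{n-j+1}}\\ \sqrt{v_{n-j+1}}&0\end{smallmatrix}\right)$ for the $i_v$ indices with $v_{n-j+1}\neq0$, together with scalar blocks $(\omega_{n-j+1})$ for the remaining indices. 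Each $2\times2$ block has determinant $-v_{n-j+1}<0$, hence inertia $(1,1,0)$, while a scalar block contributes $(1,0,0)$ or $(0,1,0)$ according to $\mathrm{sgn}(\omega_{n-j+1})$. Summing yields inertia $(i_v+i_+,\,i_v+i_-,\,0)$ for $D_i$, hence for $S$. In particular $S$ is nonsingular, so $Q_i$ has exactly $i+i_v$ real zeros (the size of $S$), of which $i_v+i_+$ are positive (those with $\mu>0$) and $i_v+i_-$ are negative; note $0$ is never a zero, consistent with $Q_i(0)=\det J_{11,i}>0$.

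Finally, for simplicity I would note that a zero $\lambda_0$ of $Q_i$ is simple if and only if $\mu=1/\lambda_0$ is a simple eigenvalue of the symmetric matrix $S$; since symmetric matrices are diagonalizable, it is enough to show the eigenspace is one–dimensional. Eliminating the auxiliary variables exactly as in the derivation of \eqref{jy}, a null vector of $J_i-\lambda_0D_i$ is determined by a vector $\phi\in\ker L(\lambda_0)$, where $L(z)=J_{11,i}-zD_{11,i}-z^2V_i$ is tridiagonal with nonzero off-diagonal entries $-a_k$ (here $V_i=D_{12,i}D_{12,i}^{T}$ is the diagonal matrix of the $v_{n-j+1}$). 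For such an irreducible tridiagonal matrix, forward substitution expresses every component of a null vector through its first one, so $\dim\ker L(\lambda_0)\le1$; being a zero of $Q_i$ forces equality, whence simplicity. I expect the simplicity claim to be the main obstacle, since reality and the sign count follow cleanly once $J_{11,i}\succ0$ and the inertia of $D_i$ are in hand, whereas simplicity requires passing from algebraic to geometric multiplicity (justified by the self-adjointness encoded in the congruence to $S$) and then invoking irreducibility of the pencil $L(z)$. The one computational fact underpinning everything is the dominance inequality $b_k>a_k+a_{k+1}$.
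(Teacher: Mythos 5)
Your proposal is correct and follows essentially the same route as the paper's proof: positive definiteness of $J_i$ (via the same inequality $b_{j-1}>a_{j-1}+a_j$, packaged as diagonal dominance rather than the paper's sum-of-squares identity), reduction to the symmetric matrix $S=J_i^{-1/2}D_iJ_i^{-1/2}$, Sylvester's law of inertia applied to the block structure of $D_i$, and a geometric-multiplicity argument for simplicity. The only substantive difference is that where the paper merely asserts that each eigenvalue of the pencil $J_i\xi=zD_i\xi$ has geometric multiplicity one, you actually prove it by eliminating the auxiliary variables and running forward substitution through the irreducible tridiagonal pencil $L(z)=J_{11,i}-zD_{11,i}-z^2V_i$, thereby filling a gap the paper leaves to the reader.
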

 \begin{proof}
     
For any $\xi=(\xi_1,\cdots, \xi_{i+i_v})\in\mathbb{R}^{i+i_v}$, we have that
 	\begin{align}
 	\xi J_i\xi^T=&(b_0-a_1)\xi_1^2+(b_{i-1}-a_{i-1})\xi_i^2+\sum_{j=2}^{i-1}(b_{j-1}-a_{j-1}-a_j)\xi_j^2  \nonumber\\
 	&+\sum_{j=1}^{i-1}a_j(\xi_{j+1}-\xi_j)^2+ \sum_{j=i+1}^{i+i_v} \xi_j^2.        \nonumber
 	\end{align}
 	By \eqref{ai} and \eqref{bi}, one obtains
 		 $b_0-a_1>0,\ b_{i-1}-a_{i-1}>0$ and
 \begin{equation} \nonumber
 b_{j-1}-a_{j-1}-a_j>0,\ j=2,\cdots, i-1.
 \end{equation}
 	Then one has that 
 	\begin{equation}
 	\xi J_i\xi^T\ge 0,
 	\end{equation}
 	and $\xi J_i\xi^T=0$ if and only if $\xi=0$. Therefore,  $J_i$ is positive definite.
 	
 	 For any $j\in \{1,\cdots, i\}$ with $v_{n-j+1}=0$,  by \eqref{definitionD}, $\omega_{n-j+1}$ is an eigenvalue of $D_i$. Obviously, for any $j\in \{1,\cdots, i\}$ with $v_{n-j+1}\ne0$, all the eigenvalues of the matrix
 	\begin{align}  \label{matrixn-j+1}
 	\left(
 	\begin{array}{cc}
 	\omega_{n-j+1} & \sqrt{v_{n-j+1} }\\
 	 \sqrt{v_{n-j+1}} &   0 \\
 	\end{array}
 	\right)
 	\end{align}
 are also eigenvalues of the matrix $D_i$.  By direct calculations,
 \begin{equation} \nonumber
 \lambda_{\pm}=\frac{\omega_{n-j+1}\pm\sqrt{\omega_{n-j+1}^2+4v_{n-j+1}}}{2}
 \end{equation}
are eigevalues of the matrix \eqref{matrixn-j+1}. Hence, $\lambda_{\pm}$ are also eigevalues of the matrix $D_i$ with $\lambda_{\pm}\gtrless 0$. By the definitions of $i_+, i_-$ and $i_v$, we conclude that the matrix $D_i$ has $i_v+i_+$ positive zeros and $i_v+i_-$ negative zeros.

Since $J_i$ is positive definite, $J_i$ has the form $B^2$, where the  matrix $B$ is symmetric and positive definite. Then the spectral problem 
\begin{equation}  \label{ji}
J_i \xi=zD_i \xi
\end{equation}
 is equivalent to $B^{-1}D_iB^{-1}w=z^{-1}w, w=B\xi$. {Note that the 
geometric multiplicity of each eigenvalue for problem \eqref{ji} is one, this also holds for the matrix $B^{-1}D_iB^{-1}$.}
Since  $B^{-1}D_iB^{-1}$ is a symmetric  matrix, the eigenvalues of $B^{-1}D_iB^{-1}$ are real and the algebraic multiplicity of each eigenvalue equals geometric multiplicity, which equals $1$. Therefore, we conclude that the zeros of $Q_i(z)$ are real and simple.  

Applying Sylvester's Law of Inertia, the matrix $B^{-1}D_iB^{-1}$ has the same number of positive and negative eigenvalues as $D_i$. Then problem \eqref{ji} has $i_v+i_+$ positive zeros and $i_v+i_-$ negative zeros.
\end{proof}

\begin{remark}
	\rm{(i)} If $v_{n-i}=0$, $\omega_{n-i}>0$, then $ Q_{i+1}(z)$ and $Q_{i}(z)$ has the same number of negative zeros, $ Q_{i+1}(z)$ has one more positive zero than $Q_{i}(z)$;\\
	\rm{(ii)} if $v_{n-i}=0$, $\omega_{n-i}<0$, then $ Q_{i+1}(z)$ and $Q_{i}(z)$ has the same number of positive zeros, and $Q_{i+1}(z)$ has one more negative zero than $Q_{i}(z)$;\\
	\rm{(iii)} if $v_{n-i}>0$, then $Q_{i+1}(z)$ has one more positive zero and negative zero than $Q_{i}(z)$.
\end{remark}

\begin{lemma}   \label{a}
	For any $i=1,\cdots, n-1,$ one has that
	\begin{equation} \label{qihephii}
	Q_{i}(z)=\left(e^{\frac{x_n}{2}}\prod_{j=1}^{i} a_j\right)\phi_{i}(z),
	\end{equation}
and
	\begin{equation}
	Q_n(z)=\left(e^{\frac{x_n}{2}-\frac{x_1}{2}}\prod_{j=1}^{n-1} a_j\right)W(z),  \label{wz}
	\end{equation}
	where $W(z)$ is defined by \eqref{Wzdefine}.
\end{lemma}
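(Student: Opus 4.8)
The plan is to prove the first identity \eqref{qihephii} by induction on $i$, and then to obtain \eqref{wz} by specializing the three-term recursion at $i=n$ and matching the outcome to the Wronskian evaluated immediately to the left of $x_1$.

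Write $C_i := e^{x_n/2}\prod_{j=1}^{i}a_j$, so that $C_i = a_iC_{i-1}$ and $a_{i-1}^2C_{i-2} = a_{i-1}C_{i-1}$. The base case $i=1$ is direct: \eqref{phi2} gives $a_1\phi_1(z) = (b_0 - z\omega_n - z^2 v_n)\phi_0(z)$, and since $\phi_0(z) = e^{-x_n/2}$ by \eqref{phi1} while $Q_1(z) = b_0 - \omega_n z - v_n z^2$ by \eqref{q1}, we read off $Q_1(z) = e^{x_n/2}a_1\phi_1(z) = C_1\phi_1(z)$. For the inductive step I assume $Q_{i-1} = C_{i-1}\phi_{i-1}$ and $Q_{i-2} = C_{i-2}\phi_{i-2}$, substitute these into the recursion \eqref{qi=1}, and factor out $C_{i-1}$ using the two relations above; the remaining bracket is $(b_{i-1} - z\omega_{n-i+1} - z^2v_{n-i+1})\phi_{i-1}(z) - a_{i-1}\phi_{i-2}(z)$, which is exactly $a_i\phi_i(z)$ by \eqref{phi3}. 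Hence $Q_i = C_{i-1}a_i\phi_i = C_i\phi_i$, which closes the induction for $i=1,\dots,n-1$.

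For \eqref{wz} I apply \eqref{qi=1} with $i=n$ together with the already-established formulas for $Q_{n-1}$ and $Q_{n-2}$; factoring out $C_{n-1}$ as before reduces $Q_n(z)$ to $C_{n-1}R(z)$, where $R(z) := (b_{n-1} - z\omega_1 - z^2 v_1)\phi_{n-1}(z) - a_{n-1}\phi_{n-2}(z)$ is precisely the difference appearing in the eigenvalue condition \eqref{b_{n-1}}. It then remains to prove $W(z) = e^{x_1/2}R(z)$. Since no point masses lie to the left of $x_1$, the solution $\phi_-(z,\cdot)$ coincides with $e^{x/2}$ on $(-\infty, x_1]$ (it solves the free equation there and matches \eqref{eq2.222} at $-\infty$), so evaluating \eqref{Wzdefine} at $x = x_1-$ gives $W(z) = e^{x_1/2}\bigl(\tfrac{1}{2}\phi_+(z,x_1) - \phi_+'(z,x_1-)\bigr)$. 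Inserting the jump relation \eqref{tiaoyue} at $x_1$, the local expression \eqref{phi} for $\phi_+'(z,x_1+)$, and the identity $b_{n-1} = \tfrac{1}{2} + \tfrac{1}{2}\coth\bigl(\tfrac{x_2-x_1}{2}\bigr)$ (which follows from \eqref{bi} with $x_0 = -\infty$) recovers $\tfrac{1}{2}\phi_+(z,x_1) - \phi_+'(z,x_1-) = R(z)$. Combining the two displays yields $Q_n = C_{n-1}e^{-x_1/2}W = e^{(x_n-x_1)/2}\bigl(\prod_{j=1}^{n-1}a_j\bigr)W$, as claimed.

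The induction is routine. The one genuinely delicate point is the identification $W(z) = e^{x_1/2}R(z)$: one must track the left/right derivatives at $x_1$ dictated by the conventions \eqref{zuolianxu} and \eqref{tiaoyue}, and observe that the $\coth$ term associated with the half-line $x_0 = -\infty$ degenerates to the constant $\tfrac{1}{2}$, so that the coefficient of $\phi_{n-1}(z)$ assembles exactly into $b_{n-1} - z\omega_1 - z^2 v_1$.
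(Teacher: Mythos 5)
Your proof is correct, and while the first half follows the paper, your treatment of \eqref{wz} is a genuinely different argument. For \eqref{qihephii} you do essentially what the paper does: it defines $u_i=\bigl(e^{\frac{x_n}{2}}\prod_{j=1}^i a_j\bigr)\phi_i(z)$ and checks that $u_i$ satisfies the same three-term recursion and initial data as $Q_i$, which is your induction in different clothing (one small bookkeeping point: since the recursion is three-term, your inductive step at $i=2$ also needs the trivial case $Q_0=e^{\frac{x_n}{2}}\phi_0=1$, immediate from \eqref{q0} and \eqref{phi1}, alongside the $i=1$ base case). For \eqref{wz}, the paper argues indirectly: $W(z)$ and $Q_n(z)$ are polynomials with the same simple zeros (via \eqref{eq2.5}, Lemma \ref{lemma5.3}, and the identification of the zeros of $Q_n$ with the eigenvalues), hence $Q_n=\gamma W$ for a constant $\gamma$, which is then pinned down by evaluating $Q_n(0)$ through the recursion and using $W(0)=1$. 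You instead compute $W$ directly: $\phi_-(z,\cdot)=e^{\frac{x}{2}}$ on $(-\infty,x_1]$, so the Wronskian \eqref{Wzdefine} evaluated with left derivatives at $x_1$ equals $e^{\frac{x_1}{2}}\bigl(\tfrac{1}{2}\phi_+(z,x_1)-\phi_+'(z,x_1-)\bigr)$, and the jump relation \eqref{tiaoyue}, the formula \eqref{phi}, and the degeneration $b_{n-1}=\tfrac{1}{2}+\tfrac{1}{2}\coth\bigl(\tfrac{x_2-x_1}{2}\bigr)$ (from \eqref{bi} with $x_0=-\infty$) turn this into exactly the bracket $R(z)$ produced by the recursion at $i=n$; I checked this computation and it is sound. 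The trade-off: your route is self-contained and purely algebraic, needing no spectral input (neither the simplicity of the zeros of $W$ from Lemma \ref{lemma2.1} nor Lemma \ref{lemma5.3}), and it establishes \eqref{wz} as a direct polynomial identity rather than through zero-counting; the paper's route is shorter on the page given that those lemmas are already in hand, and it sidesteps the left/right-derivative bookkeeping at $x_1$ that you correctly flag as the delicate point.
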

\begin{proof}
	We first show that \eqref{qihephii} holds. Let $$u_0=e^{\frac{x_n}{2}}\phi_{0}(z), u_{i}(z)= \left(e^{\frac{x_n}{2}}\prod_{j=1}^{i} a_j\right)\phi_{i}(z),\ {i=1,\cdots,n-1}.   $$
	Then
	\begin{align}
	u_0(z)&=1;  \nonumber \\
	u_1(z)&=b_0-\omega_n z-v_n z^2; \nonumber\\
	u_{i}(z)&=(b_{i-1}-\omega_{n-i+1} z-v_{n-i+1} z^2)u_{i-1}(z)-a_{i-1}^2u_{i-2}(z), \ i=2,\cdots,n-1.  \nonumber
	\end{align}
   {By \eqref{q0}, \eqref{q1} and \eqref{qi=1}, for any $i=1,\cdots, n-1,$ we have  $u_i(z)=Q_i(z)$.}
 
	Now we are going to prove that \eqref{wz} holds. By \eqref{eq2.5} and Lemma \ref{lemma5.3},  $W(z)$ and $Q_n(z)$ have the same zeros and  multiplicities. Then there exists $\gamma\in\mathbb{R}$, so that
	\begin{equation}
	Q_n(z)=\gamma W(z).
	\end{equation}
	From  \eqref{qi=1} and  \eqref{qihephii},
	\begin{align}
	Q_n(0)&=b_{n-1}Q_{n-1}(0)-a_{n-1}^2Q_{n-2}(0)   \nonumber \\
	&=e^{\frac{x_n}{2}}(b_{n-1}\phi_{n-1}(0)-a_{n-1}\phi_{n-2}(0))\prod_{j=1}^{n-1} a_j. \label{qn}
	\end{align}
	Substituting  \eqref{phi_+(0,a)}, \eqref{ai} and \eqref{bi} into \eqref{qn}, by some calculations, we have that
	\begin{equation}
	Q_n(0)=e^{\frac{x_n}{2}-\frac{x_1}{2}}\prod_{j=1}^{n-1} a_j. \label{w0}
	\end{equation}
	Combining \eqref{w0} with \eqref{w(0)}, we can obtain \eqref{wz}.
\end{proof}

Let $\mathcal{A}$ be the set of pairs $(A, B)$ of real polynomials satisfying the following conditions: \\
$\rm{(i)}$
$\rm{deg}(\emph{A}(\emph{z}))+1 \le deg (\emph{B}(\emph{z})) \le \rm{deg} (\emph{A}(\emph{z}))+2$;\\
$\rm{(ii)}$ the zeros of $A(z)$ and $B(z)$ are real and simple, and  $A(0)> 0, B(0)> 0$;  \\
$\rm{(iii)}$ the largest negative zero  $\lambda_1^-(A)$ and the smallest positive zero $\lambda_1^+(A)$ of $A(z)$,  the largest negative zero $\lambda_1^-(B)$ and the smallest positive zero $\lambda_1^+(B)$ of $B(z)$ satisfy
\begin{equation} \nonumber
\lambda_1^-(A)< \lambda_1^-(B)<\lambda_1^+(B)<\lambda_1^+(A);
\end{equation}
$\rm{(iv)}$ the positive zeros of $A(z)$ and $B(z)$ separate one another; the negative zeros of $A(z)$ and $B(z)$ separate one another.

\begin{lemma} \label{lemma5.6}
 For any $i=1,\cdots, n$,  we have that 
 \begin{equation} \label{e}
 (Q_{i-1}(z),Q_{i}(z))\in \mathcal{A}.
 \end{equation}
\end{lemma}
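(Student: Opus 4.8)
The plan is to argue by induction on $i$, treating $(Q_{i-1},Q_i)$ as the pair $(A,B)$ in the definition of $\mathcal A$. The structural conditions $\rm(i)$ and $\rm(ii)$ come almost for free. From the recurrence \eqref{q1} and \eqref{qi=1}, and since $\deg Q_{i-2}<\deg Q_{i-1}$, the leading term of $Q_i$ is carried by the factor $b_{i-1}-\omega_{n-i+1}z-v_{n-i+1}z^2$ acting on $Q_{i-1}$; as $|\omega_{n-i+1}|+v_{n-i+1}>0$, this factor has degree $1$ or $2$, so $\deg Q_i-\deg Q_{i-1}\in\{1,2\}$, which is exactly $\rm(i)$. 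Reality and simplicity of the zeros are Lemma \ref{lemma5.3}, and since $Q_i(z)=\det(J_i-zD_i)$ with $J_i$ positive definite (as shown in the proof of Lemma \ref{lemma5.3}), we get $Q_i(0)=\det J_i>0$, which is the sign normalization in $\rm(ii)$.

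The heart of the matter is the interlacing conditions $\rm(iii)$ and $\rm(iv)$, which I would obtain from a Sturm-type sign analysis driven by \eqref{qi=1}. The key observation is that at any zero $\zeta$ of $Q_{i-1}$ the recurrence collapses to $Q_i(\zeta)=-a_{i-1}^2\,Q_{i-2}(\zeta)$; since $a_{i-1}>0$ and $Q_{i-2}(\zeta)\neq0$ by Lemma \ref{lemma5.1}, the sign of $Q_i$ at $\zeta$ is opposite to that of $Q_{i-2}$. Writing the positive zeros of $Q_{i-1}$ as $0<p_1<\cdots<p_r$, the inductive hypothesis $(Q_{i-2},Q_{i-1})\in\mathcal A$ gives two facts: by $\rm(iii)$ the innermost positive zero belongs to $Q_{i-1}$, so $Q_{i-2}$ has no zero on $(0,p_1)$ and hence $Q_{i-2}(p_1)>0$; by $\rm(iv)$ the positive zeros of $Q_{i-2}$ and $Q_{i-1}$ separate one another, so $Q_{i-2}$ has exactly one zero in each $(p_j,p_{j+1})$ and therefore $Q_{i-2}(p_j)$ alternates in sign. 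Consequently $Q_i(p_j)$ alternates as well, starting with $Q_i(p_1)<0$.

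Combining this with $Q_i(0)>0$ produces a zero of $Q_i$ in $(0,p_1)$ and, by the alternation, one more in each gap $(p_j,p_{j+1})$; this yields $r$ simple positive zeros of $Q_i$ in $(0,p_r)$ that separate the $p_j$, and in particular $\lambda_1^+(Q_i)<p_1=\lambda_1^+(Q_{i-1})$. Lemma \ref{lemma5.3} then pins down the total count: $Q_i$ has either $r$ or $r+1$ positive zeros, so any additional positive zero must sit beyond $p_r$ and cannot disturb the separation already found. The mirror-image argument on the negative half-axis gives $\lambda_1^-(Q_{i-1})<\lambda_1^-(Q_i)$ together with the negative-side separation, and these assemble into $\rm(iii)$ and $\rm(iv)$. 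The base cases are checked directly: for $i=1$, $Q_0\equiv1$ has no zeros and the interlacing conditions are vacuous; for $i=2$, $Q_0\equiv1>0$ forces $Q_2(\zeta)=-a_1^2<0$ at the at most two zeros of $Q_1$, which interlaces them with $Q_2(0)>0$.

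The step I expect to be most delicate is controlling the ``extra'' zeros that appear when $v_{n-i+1}>0$ and the degree jumps by $2$: the sign-change argument only manufactures one new zero on each side inside the cluster $(\lambda_1^-(Q_{i-1}),\lambda_1^+(Q_{i-1}))$, so I must invoke the precise zero-counts of Lemma \ref{lemma5.3} to force the remaining zeros outside that cluster, ruling out two zeros of $Q_i$ landing in a single subinterval. A secondary subtlety is the bookkeeping of signs across $z=0$ needed to orient $\rm(iii)$ correctly, where the positivity $Q_{i-1}(0),Q_i(0)>0$ anchors the argument. As an alternative to the sign chase, one can note that by Lemma \ref{a} the zeros of $Q_i$ are the nonzero poles of the Herglotz--Nevanlinna function $M_+(z,x_{n-i})$, and read the interlacing off the pole--zero structure of these Weyl functions at consecutive mass points, which is the viewpoint announced at the start of this section.
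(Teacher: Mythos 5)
Your proposal is correct, but it takes a genuinely different route from the paper. The paper proves Lemma \ref{lemma5.6} by showing inductively that $-Q_i(z)/\bigl(zQ_{i-1}(z)\bigr)$ is a Herglotz--Nevanlinna function: from \eqref{qi=1},
\begin{equation*}
-\frac{Q_{p}(z)}{zQ_{p-1}(z)}=v_{n-p+1}z+\omega_{n-p+1}-\frac{b_{p-1}}{z}+\frac{a_{p-1}^2Q_{p-2}(z)}{zQ_{p-1}(z)},
\end{equation*}
and the inductive hypothesis (applied through the negativity of the residues of the Herglotz--Nevanlinna function $zQ_{p-2}(z)/Q_{p-1}(z)$) shows that every residue on the right-hand side is negative, so the ratio is again Herglotz--Nevanlinna; conditions (iii)--(iv) of $\mathcal{A}$ then drop out of the standard zero--pole interlacing for rational Herglotz--Nevanlinna functions, using only Lemma \ref{lemma5.1} and $Q_i(0)>0$ --- the counts of Lemma \ref{lemma5.3} are never invoked. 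You instead induct directly on the membership $(Q_{i-1},Q_i)\in\mathcal{A}$, exploiting the same pivotal identity $Q_i(\zeta)=-a_{i-1}^2Q_{i-2}(\zeta)$ at zeros $\zeta$ of $Q_{i-1}$, but as a sign evaluation rather than a residue computation, and then run a Sturm-chain alternation anchored at $Q_i(0)>0$. What each buys: your argument is entirely elementary (intermediate value theorem plus counting), and your normalization $Q_i(0)=\det J_i>0$ is cleaner than the paper's detour through Lemma \ref{a} and $W(0)=1$; the price is that you must import the exact positive/negative zero counts of Lemma \ref{lemma5.3} and complete the counting step you flag as delicate --- the clean way is the parity observation that each of the intervals $(0,p_1),(p_1,p_2),\dots,(p_{r-1},p_r)$ contains an \emph{odd} number of zeros of $Q_i$, so if any contained more than one the total would be at least $r+2$, contradicting the bound $i_v+i_+\le (i-1)_v+(i-1)_++1$; this forces exactly one zero per interval and pushes any extra zero beyond $p_r$. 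One shared imprecision, inherited from the definition of $\mathcal{A}$ rather than introduced by you: when $Q_{i-1}$ has no positive (or no negative) zeros, the corresponding parts of (iii)--(iv) must be read as vacuous; the paper's proof glosses over this in the same way.
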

\begin{proof}
Let us  show that for any $i=1,\cdots,n$, the function
$-{Q_{i}(z)}/{zQ_{i-1}(z)}	$
is a Herglotz-Nevanlinna function by induction.
By \eqref{q0} and \eqref{q1}, we obtain that
\begin{equation} \nonumber
-\frac{Q_{1}(z)}{zQ_0(z)}=	v_n z+ \omega_n- \frac{b_0}{z}.
\end{equation}
Hence, 	$-Q_{1}(z)/ zQ_0(z)$ is  a Herglotz-Nevanlinna function.

 Assume that $-Q_{p-1}(z)/ zQ_{p-2}(z)$ is a Herglotz-Nevanlinna function. Then $zQ_{p-2}(z)/ Q_{p-1}(z)$ is also a Herglotz-Nevanlinna function.
Let $\lambda_0$ be the zero of $Q_{p-1}(z)$.  Then $\lambda_0\ne0 $ and
\begin{equation}  \label{reszQ_{i-1}(z)}
{\rm{Res}}_{z=\lambda_0} \frac{zQ_{p-2}(z)} {Q_{p-1}(z)}= \frac{\lambda_0Q_{p-2}(\lambda_0)}{\dot{Q}_{p-1}(\lambda_0)}<0.
\end{equation}
 By \eqref{qi=1}, one obtains that
\begin{equation} \nonumber
-\frac{Q_{p}(z)}{zQ_{p-1}(z)}=	v_{n-p+1} z+ \omega_{n-p+1}- \frac{b_{p-1}}{z}+\frac{a_{p-1}^2Q_{p-2}(z)}{zQ_{p-1}(z)}.
\end{equation}
Combining with \eqref{reszQ_{i-1}(z)}, we have that
\begin{equation}  \label{11111111}
{\rm{Res}}_{z=\lambda_0}- \frac{Q_{p}(z)} {zQ_{p-1}(z)}= \frac{a_{p-1}^2Q_{p-2}(\lambda_0)}{\lambda_0\dot{Q}_{p-1}(\lambda_0)}<0.
\end{equation}	
	Note that $z=0$ is also a pole of $-Q_{p}(z)/zQ_{p-1}(z)$.	
	From  \eqref{w(0)}, \eqref{phi_+(0,a)} and  Lemma \ref{a},  one has
	 \begin{align}
	 Q_i(0)>0, i=0,\cdots, n.  \label{qi0}
	 \end{align}
	Therefore,
\begin{equation}  \label{1111111}
{\rm{Res}}_{z=0}- \frac{Q_{p}(z)}{zQ_{p-1}(z)}= -\frac{Q_{p}(0)}{Q_{p-1}(0)}<0.
\end{equation}	
By \eqref{11111111}, \eqref{1111111}  and the fact that $v_{n-p+1}\ge 0$, we obtain that  $-Q_{p}(z)/zQ_{p-1}(z)$ is a Herglotz-Nevanlinna function. Hence,  for any $i=1,\cdots,n$, the function $-Q_{i}(z)/zQ_{i-1}(z)$ is a Herglotz-Nevanlinna function.

By Lemma \ref{lemma5.1}, for any $i=1,\cdots,n$, $zQ_{i-1}(z)$ and $Q_{i}(z)$ have no common zeros. Then the zeros of $zQ_{i-1}(z)$ and $Q_{i}(z)$ are simple and interlaced. Combining with \eqref{qi0}, we have  $(Q_{i-1}(z),Q_{i}(z))\in \mathcal{A}$.
\end{proof}

	 By Lemma \ref{lemma5.3}, we can denote the zeros of $Q_{i}(z)$ by
\begin{equation} \nonumber
\lambda_{i^v+i^-}^-(Q_i)<\cdots<
\lambda_{1}^-(Q_i)<0<\lambda_{1}^+(Q_i)<\cdots<\lambda_{i^v+i^+}^+(Q_i).
\end{equation}
 From Lemma \ref{lemma5.6}, we know \\
 (i) if 	$\lambda_{p_v+p_+}^+(Q_{p})<\lambda_{(p-1)_v+(p-1)_+}^+(Q_{p-1})$ for some $p$,
 then $ Q_{p}(z)$ and $Q_{p-1}(z)$ has the same number of positive zeros. Namely,
 \begin{equation}
 p_v+p_+=(p-1)_v+(p-1)_+;   \label{i0}  \\
 \end{equation}
 (ii) if 	$\lambda_{(p-1)_v+(p-1)_+}^+(Q_{p-1})<\lambda_{p_v+p_+}^+(Q_{p})$ for some $p$, then  $ Q_{p}(z)$ has one more positive zero than $Q_{p-1}(z)$. Namely,
 \begin{equation}
 p_v+p_+=(p-1)_v+(p-1)_++1.   \label{i01}  \\
 \end{equation}

Let $S_i(z)$ be the sign-changing number of the sequence $Q_0(z), \cdots$, $Q_{i}(z)$. Then we have the following lemma.

\begin{lemma}\label{lemma5.7}
	We have that
\begin{equation}   \label{Si}
S_i(z)=
\begin{cases}
0,  \quad &z\in [0,\lambda_{1}^+(Q_i)),   \\
j, \quad &z\in (\lambda_{j}^+(Q_i),\lambda_{j+1}^+(Q_{i})), 1 \le j\le i_v+i_+-1,\\
i_v+i_+, \quad & z\in (\lambda_{i_v+i_+}^+(Q_i),+\infty).
\end{cases}
\end{equation}
Here  if $i_v+i_+=0$, \eqref{Si} means that $S_i(z)=0$ for $z\in [0,\infty)$.
\end{lemma}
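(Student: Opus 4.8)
The plan is to study, for a fixed $i$, how the step function $z\mapsto S_i(z)$ evolves as $z$ increases along $[0,+\infty)$, and to show that it increases by exactly one unit at each positive zero of the terminal polynomial $Q_i$ and is otherwise constant. First I would fix the starting value: by \eqref{qi0} we have $Q_j(0)>0$ for all $j$, so the sequence $Q_0(0),\dots,Q_i(0)$ is entirely positive and $S_i(0)=0$, which already gives the value on $[0,\lambda_1^+(Q_i))$. Since each $Q_j$ has only real, simple zeros (Lemma \ref{lemma5.3}), the function $S_i(z)$ is locally constant off the finite set of zeros of $Q_1,\dots,Q_i$ and can change only as $z$ crosses such a zero.

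Next I would run the classical Sturm argument to show that the vanishing of any \emph{interior} member $Q_j$ ($1\le j\le i-1$) leaves $S_i$ unchanged. If $\lambda$ is a zero of $Q_j$, the recurrence \eqref{qi=1} gives $Q_{j+1}(\lambda)=-a_j^2Q_{j-1}(\lambda)$, and since $Q_{j-1}(\lambda)\ne0$ by Lemma \ref{lemma5.1} we obtain $Q_{j-1}(\lambda)Q_{j+1}(\lambda)<0$. Hence, as $z$ passes through $\lambda$, the sign of $Q_j$ flips while those of its neighbours $Q_{j-1},Q_{j+1}$ are preserved, so among the pairs $(Q_{j-1},Q_j)$ and $(Q_j,Q_{j+1})$ one acquires a sign change and the other loses one; the net count is unaffected. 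Consequently $S_i$ can change only at the zeros of $Q_i$ itself, and at each such simple zero it jumps by $\pm1$.

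The crux, which I expect to be the main obstacle, is to pin down the \emph{direction} of the jump at a positive zero of $Q_i$. Here I would use the proof of Lemma \ref{lemma5.6}, which establishes that $R_i(z):=-Q_i(z)/(zQ_{i-1}(z))$ is a Herglotz--Nevanlinna function and is therefore strictly increasing on every interval between its poles. A positive zero $\lambda^+$ of $Q_i$ is a simple zero of $R_i$ sitting strictly inside one such interval, so $R_i<0$ immediately to its left and $R_i>0$ immediately to its right. For $z>0$ one has $\sgn\big(Q_{i-1}(z)Q_i(z)\big)=\sgn\big(-zR_i(z)\big)=-\sgn\big(R_i(z)\big)$, so the terminal pair $(Q_{i-1},Q_i)$ carries no sign change just below $\lambda^+$ and exactly one just above. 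Together with the previous step (interior contributions being inert even if $\lambda^+$ happens to coincide with a zero of some earlier $Q_m$) this yields $S_i(\lambda^{+}+)=S_i(\lambda^{+}-)+1$.

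Finally I would assemble the count. By Lemma \ref{lemma5.3}, $Q_i$ has precisely $i_v+i_+$ positive zeros $\lambda_1^+(Q_i)<\dots<\lambda_{i_v+i_+}^+(Q_i)$. Starting from $S_i\equiv0$ on $[0,\lambda_1^+(Q_i))$ and increasing by one at each of these zeros, while remaining constant on the open intervals in between, reproduces exactly the asserted step function, with terminal value $i_v+i_+$ on $(\lambda_{i_v+i_+}^+(Q_i),+\infty)$; when $i_v+i_+=0$ there are no positive zeros and $S_i\equiv0$ on $[0,+\infty)$, as claimed.
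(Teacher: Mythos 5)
Your proof is correct, but it takes a genuinely different route from the paper's. The paper proves \eqref{Si} by induction on the index $i$: assuming the formula for $S_{p-1}$ (and, in a boundary case, $S_{p-2}$), it invokes the interlacing of the positive zeros of $Q_{p-1}$ and $Q_p$ supplied by Lemma \ref{lemma5.6} (i.e.\ $(Q_{p-1},Q_p)\in\mathcal{A}$) and then runs a case analysis on the position of $z$ relative to $\lambda_j^+(Q_p)$ and $\lambda_j^+(Q_{p-1})$, including the delicate case $z=\lambda_j^+(Q_{p-1})$ where $Q_{p-1}$ vanishes and the sign of $Q_{p-2}$ must be recovered through the recurrence \eqref{qi=1}. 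You instead fix $i$ and sweep $z$ across $[0,+\infty)$: the classical Sturm-chain observation (at a zero $\lambda$ of an interior $Q_j$ one has $Q_{j+1}(\lambda)=-a_j^2Q_{j-1}(\lambda)$ with $Q_{j\pm1}(\lambda)\ne0$ by Lemma \ref{lemma5.1}) shows that $S_i$ is inert at zeros of $Q_1,\dots,Q_{i-1}$, so it can jump only at zeros of $Q_i$; the direction of each jump is then pinned down by the Herglotz property of $-Q_i(z)/(zQ_{i-1}(z))$, which is established inside the proof of Lemma \ref{lemma5.6} and forces this function to pass from negative to positive at each of its simple positive zeros, so each crossing adds exactly one sign change to the terminal pair $(Q_{i-1},Q_i)$. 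Together with $Q_j(0)>0$ from \eqref{qi0} and the count of positive zeros from Lemma \ref{lemma5.3}, this yields \eqref{Si}, including the degenerate case $i_v+i_+=0$. What your route buys: it removes the induction on $i$ and the attendant boundary-case bookkeeping (your interior-zero argument even covers coincident zeros uniformly), and it makes transparent that $S_i(z)$ simply counts the positive zeros of $Q_i$ in $[0,z]$ --- the standard oscillation mechanism. What the paper's route buys: it uses only the interlacing \emph{statement} of Lemma \ref{lemma5.6} rather than the Herglotz function from its proof, keeping the argument at the level of zero configurations, at the cost of a heavier case analysis.
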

\begin{proof}
		We can proceed by induction. Obviously, $S_0(z), S_1(z)$ has the representation \eqref{Si}.

	Assume that for $ i \le p-1$,  \eqref{Si} holds. We show that for $i=p$, \eqref{Si} also holds.
	
		For $z\in [0,\lambda_{1}^+(Q_{p}))$, by Lemma \ref{lemma5.6}, one has   $$Q_i(z)>0, i=1,\cdots, p.$$ Namely,  $S_{p}(z)=0$.
		
		For $z\in (\lambda_{j}^+(Q_{p}),\lambda_{j+1}^+(Q_{p}))$ with $ 1 \le j\le p_v+p_+-1,$ using \eqref{qi0},  one has
		\begin{align}
		\sgn \ Q_{p}(z)&=(-1)^{j}.  \label{sgnq}
		\end{align}
		Notice that  by Lemma \ref{lemma5.6}, we have 
	$$\lambda_{j}^+(Q_{p})<\lambda_{j}^+(Q_{p-1})<\lambda_{j+1}^+(Q_{p}).$$	Then we consider the following three cases: \\
		{\bf{Case 1.}} $z\in (\lambda_{j}^+(Q_{p}),\lambda_{j}^+(Q_{p-1}))$.  From the induction hypothesis, we have
		\begin{equation}
		S_{p-1}(z)=j-1   \label{Si0}
		\end{equation}
		   and hence
		\begin{align}
	\sgn \ Q_{p-1}(z)&=(-1)^{j-1}. \label{sgn}	
	\end{align}
	By \eqref{sgnq}, \eqref{Si0} and \eqref{sgn}, one obtains  that $S_{p}(z)=j$.\\
	{\bf{Case 2.}} $z\in (\lambda_{j}^+(Q_{p-1}),\lambda_{j+1}^+(Q_{p}))$. By the induction hypothesis,  we obtain that $S_{p-1}(z)=j$ and hence $$\sgn \ Q_{p-1}(z)=(-1)^{j}.$$  Combining with \eqref{sgnq}, we have that $S_{p}(z)=j$. \\
	{\bf{Case 3.}} $z=\lambda_{j}^+(Q_{p-1})$. 
	 by \eqref{sgnq}, we obtain
	 \begin{equation}
\sgn \ Q_{p}(\lambda_{j}^+(Q_{p-1}))=(-1)^{j}. \label{sgnqi}
	  \end{equation}
Then from \eqref{qi=1} and the fact that
	 \begin{equation}
	  Q_{p-1}(\lambda_{j}^+(Q_{p-1}))=0, \label{qqqq}
	  \end{equation} one can conclude
	\begin{equation}
\sgn \ Q_{p-2}(\lambda_{j}^+(Q_{p-1}))=(-1)^{j-1}.
	\end{equation}
 By the continuity of   $Q_{p-2}(z)$, there exists $\varepsilon>0$,   for any $z\in (\lambda_{j}^+(Q_{p-1})-\varepsilon,\lambda_{j}^+(Q_{p-1})+\varepsilon)$, we have that
	\begin{equation}
	\sgn \ Q_{p-2}(z)=(-1)^{j-1}.  \label{sgnnnn}
\end{equation}
 By the induction hypothesis, for any $z\in (\lambda_{j}^+(Q_{p-1})-\varepsilon,\lambda_{j}^+(Q_{p-1})+\varepsilon)$, one has
 \begin{equation}
 S_{p-2}(z)= C, \notag
 \end{equation}
where $C=1$ or $-1$.   Note that for any $z\in (\lambda_{j}^+(Q_{p-1})-\varepsilon,\lambda_{j}^+(Q_{p-1}))$,
by \eqref{Si0}, \eqref{sgn} and \eqref{sgnnnn},
we deduce
\begin{equation}
S_{p-2}(z)=(-1)^{j-1}   \notag
\end{equation}
and hence $C=(-1)^{j-1}.$ Then  we can obtain that
	\begin{equation}
	S_{p-2}(\lambda_{j}^+(Q_{p-1}))=(-1)^{j-1}. \nonumber
	\end{equation}
	Combining with \eqref{sgnqi} and \eqref{qqqq},  we conclude that
	\begin{equation}
	S_{p}(\lambda_{j}^+(Q_{p-1}))=(-1)^{j}.  \nonumber
	\end{equation}

Hence, for $z\in(\lambda_{j}^+(Q_{p}),\lambda_{j+1}^+(Q_{p}))$ with $1 \le j\le p_v+p_+-1,$  one has that
	\begin{equation}
	S_{p}(z)=(-1)^{j}.  \nonumber
	\end{equation}

For $z\in (\lambda_{p_v+p_+}^+(Q_{p}),+\infty)$,
	using \eqref{qi0}, one can obtain
	\begin{equation}
	\sgn\ Q_{p}(z)=(-1)^{p_v+p_+}. \label{qqq}
	\end{equation}
If  $\lambda_{(p-1)_v+(p-1)_+}^+(Q_{p-1})<\lambda_{p_v+p_+}^+(Q_{p})$, by the induction hypothesis and \eqref{i01}, we have that
	\begin{equation}
	S_{p-1}(z)=(p-1)_v+(p-1)_+=p_v+p_+-1. \label{w}
	\end{equation}
	and hence
	\begin{equation}
	\sgn \ Q_{p-1}(z)=(-1)^{p_v+p_+-1}. \label{ww}
	\end{equation}
	By \eqref{qqq}, \eqref{w} and \eqref{ww},  one can obtain that
	$S_{p}(z)=p_v+p_+.$

	 If 
	$\lambda_{p_v+p_+}^+(Q_{p})<\lambda_{(p-1)_v+(p-1)_+}^+(Q_{p-1})$, we split the proof into three cases:\\
{\bf{Case 1.}} $z\in  (\lambda_{{p}_v+{p}_+}^+(Q_{p}),\lambda_{(p-1)_v+(p-1)_+}^+(Q_{p-1}))$. Due to our induction hypothesis and \eqref{i0}, one can obtain that
\begin{equation}
S_{p-1}(z)=(p-1)_v+(p-1)_+-1=p_v+p_+-1.  \label{si01}
\end{equation}
and hence
\begin{equation}
 \sgn \ Q_{p-1}(z)=(-1)^{p_v+p_+-1}.  \label{sgnqi0-1}
\end{equation}
By \eqref{qqq}, \eqref{si01} and  \eqref{sgnqi0-1} , we have that $S_{p}(z)=p_v+p_+$.    \\
{\bf{Case 2.}} $z\in  (\lambda_{(p-1)_v+(p-1)_+}^+(Q_{p-1}), +\infty).$ By our induction hypothesis and \eqref{i0}, we can obtain
\begin{equation}
S_{p-1}(z)=(p-1)_v+(p-1)_+=p_v+p_+  \label{q}
\end{equation}
 and hence
 \begin{equation}
 \sgn \ Q_{p-1}(z)=(-1)^{p_v+p_+}. \label{qq}
 \end{equation} 
  By \eqref{qqq}, \eqref{q} and \eqref{qq}, we have $S_{p}(z)=p_v+p_+$. \\
{\bf{Case 3.}} $z=\lambda_{(p-1)_v+(p-1)_+}^+(Q_{p-1})$.
Following the similar steps in the proof of {\bf{Case 3}} above, we obtain that  $S_{p}(z)=p_v+p_+$.

 Therefore, 	for $z\in (\lambda_{p_v+p_+}^+(Q_{p}),+\infty)$, if
$\lambda_{p_v+p_+}^+(Q_{p})<\lambda_{(p-1)_v+(p-1)_+}^+(Q_{p-1})$, we can conclude that
\begin{equation}
S_{p}(z)=p_v+p_+. \nonumber
\end{equation}

As a consequence, for $z\in (\lambda_{p_v+p_+}^+(Q_{p}),+\infty)$, one has
$S_{p}(z)=p_v+p_+$.
\end{proof}

\begin{lemma}\label{lemma5.9} For any $z\in \mathbb{R}$,
	if $\phi_+(z, x_j)\phi_+(z, x_{j+1})>0$, then $\phi_+(z, x)$ has no zero in the interval $[x_{j},x_{j+1}]$. If $\phi_+(z, x_j)\phi_+(z, x_{j+1})\le 0$, then $\phi_+(z, x)$ has exactly one zero in the interval $[x_{j},x_{j+i}]$.
\end{lemma}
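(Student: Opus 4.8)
The plan is to reduce everything to an elementary observation about a single affine function. Fix $z\in\mathbb{R}$ and the index $j$. By \eqref{phi(z,x)}, on the closed interval $[x_j,x_{j+1}]$ we may write
\[
\phi_+(z,x)=Ae^{\frac{x}{2}}+Be^{-\frac{x}{2}}
\]
for suitable $A=A(z,j)$, $B=B(z,j)\in\mathbb{R}$. Since $e^{x/2}>0$, multiplying by $e^{x/2}$ preserves both the sign and the location of zeros, so with the substitution $t=e^{x}$ I would analyse the affine function $p(t):=At+B$ on the interval $[t_j,t_{j+1}]$, where $t_j=e^{x_j}<e^{x_{j+1}}=t_{j+1}$. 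Because $x\mapsto e^{x}$ is a strictly increasing bijection, the zeros of $\phi_+(z,\cdot)$ in $[x_j,x_{j+1}]$ correspond bijectively to the zeros of $p$ in $[t_j,t_{j+1}]$, and $\sgn\phi_+(z,x)=\sgn p(e^{x})$ for all $x$.

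Granting for the moment that $(A,B)\neq(0,0)$, the function $p$ is a nonzero affine function and the conclusion is immediate. If $\phi_+(z,x_j)\phi_+(z,x_{j+1})>0$, then $p(t_j)$ and $p(t_{j+1})$ share the same nonzero sign; as $p$ is monotone (or a nonzero constant), its unique possible root $-B/A$ lies outside $[t_j,t_{j+1}]$, so $\phi_+(z,\cdot)$ has no zero there. If $\phi_+(z,x_j)\phi_+(z,x_{j+1})<0$, the intermediate value theorem produces a root of $p$ in $(t_j,t_{j+1})$, unique by monotonicity (note $A\neq0$ here). If $\phi_+(z,x_j)\phi_+(z,x_{j+1})=0$, then at least one endpoint is a root, and both cannot vanish simultaneously, since $p(t_j)=p(t_{j+1})=0$ with $t_j\neq t_{j+1}$ would force $A=B=0$; hence there is exactly one zero in $[x_j,x_{j+1}]$. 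Collecting these cases yields the claim.

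The one step requiring care, and the main obstacle, is excluding the degenerate possibility $A=B=0$, that is, showing $\phi_+(z,\cdot)$ cannot vanish identically on the subinterval $[x_j,x_{j+1}]$. Were this to occur, choosing any interior point $x_0$ would give $\phi_+(z,x_0)=\phi_+'(z,x_0)=0$; uniqueness for the homogeneous equation $-f''+\frac{1}{4}f=0$ would then force $\phi_+(z,\cdot)\equiv0$ on $[x_j,x_{j+1}]$, and propagating this trivial Cauchy data across each interface through the continuity and jump relations \eqref{lianxu}--\eqref{tiaoyue} would give $\phi_+(z,\cdot)\equiv0$ on all of $\mathbb{R}$. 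This contradicts $\phi_+(z,x)=e^{-x/2}$ near $+\infty$ from \eqref{eq2.222}, so $(A,B)\neq(0,0)$ as needed.
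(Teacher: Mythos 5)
Your proof is correct and follows essentially the same route as the paper's: both rest on the local representation $\phi_+(z,x)=Ae^{\frac{x}{2}}+Be^{-\frac{x}{2}}$ on $[x_j,x_{j+1}]$ (the paper's \eqref{phi(z,x)}), both get existence from the intermediate value theorem, and both exclude the degenerate case $A=B=0$ by propagating the trivial solution through \eqref{lianxu}--\eqref{tiaoyue} to contradict \eqref{eq2.222}. The only difference is presentational: where you substitute $t=e^{x}$ and invoke monotonicity of the affine function $At+B$, the paper verifies the same zero-counting facts by direct computation (factoring the product of endpoint values in the first case, and solving the two-zero linear system in the second).
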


\begin{proof}
We first show that if $\phi_+(z, x_j)\phi_+(z, x_{j+1})>0$, then $\phi_+(z, x)$ has no zero in the interval $[x_{j},x_{j+1}]$. Otherwise, let $y_1 \in [x_{j},x_{j+i}]$ be the zero of  $\phi_+(z, x)$.
Note that
for $x\in [x_{j},x_{j+1}]$,  there exists $A, B\in \mathbb{R}$, so that
\begin{equation}
\phi_{+}(z,x)=Ae^{\frac{x}{2}}+ Be^{-\frac{x}{2}}. \label{1}
\end{equation}
Then we have $Ae^{\frac{y_1}{2}}+ Be^{-\frac{y_1}{2}}=0$. That is, $B=-Ae^{y_1}$.
Substituting this into \eqref{1} and letting $x=x_j, x_{j+1}$, we have that
\begin{align}
\phi_+(z,x_j)=Ae^{\frac{x_j}{2}}-Ae^{y_1}e^{-\frac{x_{j}}{2}}, \
\phi_+(z,x_{j+1})=Ae^{\frac{x_{j+1}}{2}}-Ae^{y_1}e^{-\frac{x_{j+1}}{2}}. \nonumber 
\end{align}
Therefore, 
\begin{align}
\phi_+(z,x_j)\phi_+(z,x_{j+1})=A^2(e^{\frac{x_j}{2}}-e^{y_1-\frac{x_j}{2}})
(e^{\frac{x_{j+1}}{2}}-e^{y_1-\frac{x_{j+1}}{2}})\le 0. \nonumber
\end{align}
This is in contradiction with  the assumption that $\phi_+(z, x_j)\phi_+(z, x_{j+1})>0$.

 If $\phi_+(z, x_j)\phi_+(z, x_{j+1})\le 0$, then $\phi_+(z, x)$ has at least one  zero in the interval $[x_{j},x_{j+1}]$. We show $\phi_+(z, x)$ has exactly one  zero in the interval $[x_{j},x_{j+1}]$.
Otherwise, let $y_1, y_2\in [x_{j},x_{j+1}]$ be zeros of  $\phi_+(z, x)$.
Hence by \eqref{1}, one has
\begin{align}
Ae^{\frac{y_1}{2}}+ Be^{-\frac{y_1}{2}}=0, \
Ae^{\frac{y_2}{2}}+ Be^{-\frac{y_2}{2}}=0. \nonumber
\end{align}
Then one obtains that $A=B=0$. By \eqref{lianxu} and \eqref{tiaoyue}, we have $\phi_+(z,x)=0$ for all $x\in \mathbb{R}$. This contradicts \eqref{eq2.222}.
\end{proof}

\begin{theorem} \label{theorem5.9}
	$H$ has $n+n_v$ real and simple eigenvalues:
	\begin{equation}\nonumber
	\lambda_{n_v+n_-}^-<\cdots<
	\lambda_{1}^-<0<\lambda_{1}^+<\cdots<\lambda_{n_v+n_+}^+.
	\end{equation}
	The eigenfunction of the eigenvalue  $\lambda_{i}^{\pm}$ has exactly $i-1$ zeros.
\end{theorem}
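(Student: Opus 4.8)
The plan is to prove the two assertions separately: first the count, reality, simplicity and sign distribution of the eigenvalues, and then the nodal (oscillation) count. For the first assertion I would start from Lemma \ref{a}, which gives $Q_n(z) = \big(e^{\frac{x_n}{2}-\frac{x_1}{2}}\prod_{j=1}^{n-1}a_j\big)W(z)$ with a strictly positive proportionality constant. Since $\sigma(H)$ coincides with the zero set of $W(z)$, the eigenvalues of $H$ are exactly the zeros of $Q_n(z)$. Lemma \ref{lemma5.3}, applied with $i=n$, then says that these zeros are real and simple, with $n_v+n_+$ positive and $n_v+n_-$ negative ones. Using the standing assumption \eqref{omegai} (so that each index $i$ lies in exactly one of $v_i\neq 0$, $v_i=0,\omega_i>0$, $v_i=0,\omega_i<0$) we get $n_v+n_++n_-=n$, whence the total number of eigenvalues is $(n_v+n_+)+(n_v+n_-)=n+n_v$, in the displayed ordering. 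This settles the first claim with essentially no further work.

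For the oscillation statement, fix an eigenvalue $\lambda$ and let $\psi:=\phi_+(\lambda,\cdot)$ be the corresponding eigenfunction. First I would dispose of the exterior rays: by \eqref{eq2.222} one has $\psi(x)=e^{-\frac{x}{2}}$ for $x\ge x_n$, and by \eqref{xishu} $\psi(x)=c_\lambda e^{\frac{x}{2}}$ with $c_\lambda\neq 0$ for $x\le x_1$, so $\psi$ has no zeros outside $(x_1,x_n)$. On each interval $[x_j,x_{j+1}]$, Lemma \ref{lemma5.9} tells us $\psi$ has exactly one zero when $\psi(x_j)\psi(x_{j+1})\le 0$ and none when the product is positive. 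By Lemma \ref{a}, $\psi(x_{n-i})=\phi_i(\lambda)$ is a positive multiple of $Q_i(\lambda)$, so the sign sequence of $\psi(x_n),\psi(x_{n-1}),\dots,\psi(x_1)$ coincides with that of $Q_0(\lambda),Q_1(\lambda),\dots,Q_{n-1}(\lambda)$. I would therefore show that the number of zeros of $\psi$ equals the number of sign changes in this sequence, that is, equals $S_{n-1}(\lambda)$.

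The delicate point is that $\psi$ may vanish at an interior knot $x_{j_0}$, i.e. $Q_{n-j_0}(\lambda)=0$ for some $1\le n-j_0\le n-2$; this does not contradict Lemma \ref{lemma5.1}, which only forbids common zeros of consecutive $Q$'s. I would handle it with the standard Sturm sign-alternation: the recurrence \eqref{qi=1} gives $Q_{m+1}(\lambda)=-a_m^2 Q_{m-1}(\lambda)$ whenever $Q_m(\lambda)=0$, and $Q_{m-1}(\lambda)\neq 0$ by Lemma \ref{lemma5.1}, so $Q_{m-1}(\lambda)$ and $Q_{m+1}(\lambda)$ carry opposite signs. Consequently $\psi(x_{j_0-1})$ and $\psi(x_{j_0+1})$ have opposite signs, the two adjacent intervals each carry their unique zero at the common endpoint $x_{j_0}$ (a single genuine zero), and this single zero corresponds to exactly one net sign change in the $Q$-sequence; the endpoints $\psi(x_1)$ and $\psi(x_n)$ are positive multiples of $Q_{n-1}(\lambda)$ and $Q_0(\lambda)$ and hence nonzero (again by Lemma \ref{lemma5.1} and $Q_0\equiv 1$). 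This makes the identification ``number of zeros of $\psi$ $=S_{n-1}(\lambda)$'' robust to knot-zeros. To evaluate $S_{n-1}(\lambda_i^+)$ I would invoke $(Q_{n-1},Q_n)\in\mathcal{A}$ (Lemma \ref{lemma5.6}): the separation and endpoint conditions place the $i$-th positive zero of $Q_n$ in the interval $(\lambda_{i-1}^+(Q_{n-1}),\lambda_i^+(Q_{n-1}))$, with the convention that this is $[0,\lambda_1^+(Q_{n-1}))$ when $i=1$. Lemma \ref{lemma5.7} then yields $S_{n-1}(\lambda_i^+)=i-1$, so $\psi$ has exactly $i-1$ zeros. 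For the negative eigenvalues $\lambda_i^-$ I would run the mirror-image argument, using the negative-zero separation built into $\mathcal{A}$ together with the analogue of Lemma \ref{lemma5.7} for $z<0$ (proved by the same induction, or obtained from the symmetry $z\mapsto -z$, $\omega_i\mapsto -\omega_i$).

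I expect the main obstacle to be exactly the bookkeeping of the preceding paragraph: rigorously converting the geometric zero-count of the piecewise-exponential eigenfunction into the algebraic sign-change count $S_{n-1}$ while allowing $\psi$ to vanish at interior knots. The Sturm alternation coming from \eqref{qi=1} and Lemma \ref{lemma5.1} is precisely what keeps the two counts equal; without it the naive rule ``count the intervals with $\psi(x_j)\psi(x_{j+1})\le 0$'' would overcount each knot-zero by one. A secondary, routine obstacle is that Lemma \ref{lemma5.7} is stated only for $z\ge 0$, so the negative spectrum requires its reflected counterpart.
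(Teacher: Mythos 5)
Your proposal is correct and takes essentially the same route as the paper: the eigenvalue count, reality and simplicity come from Lemma \ref{lemma5.3} combined with $Q_n(z)=cW(z)$ (Lemma \ref{a}), and the nodal count comes from the interlacing of Lemma \ref{lemma5.6}, the evaluation $S_{n-1}(\lambda_i^{\pm})=i-1$ via Lemma \ref{lemma5.7}, and the conversion of sign changes into zeros via Lemma \ref{lemma5.9}, with the negative eigenvalues handled by the mirror argument. Your explicit Sturm alternation at knot-zeros (that $Q_{m+1}(\lambda)=-a_m^2Q_{m-1}(\lambda)$ when $Q_m(\lambda)=0$, so adjacent values have opposite signs) spells out a bookkeeping point the paper leaves implicit inside Lemmas \ref{lemma5.7} and \ref{lemma5.9}, but it is the same proof.
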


\begin{proof}
	From Lemma \ref{lemma5.3} and Lemma \ref{lemma5.6},  we obtain that $H$ has $n+n_v$ real and simple eigenvalues.
	
If $n_v+n_+=(n-1)_v+(n-1)_+$.
 By \eqref{wz} and Lemma \ref{lemma5.6}, one has 
	$$0<\lambda_1<\lambda_{1}^+(Q_{n-1})<\cdots<\lambda_{n_v+n_+-1}^+(Q_{n-1})<\lambda_{n_v+n_+}^+<\lambda_{n_v+n_+}^+(Q_{n-1}).$$
	If $n_v+n_+=(n-1)_v+(n-1)_++1$. Then we have
		$$0<\lambda_1<\lambda_{1}^+(Q_{n-1})<\cdots<\lambda_{n_v+n_+-1}^+<\lambda_{n_v+n_+-1}^+(Q_{n-1})<\lambda_{n_v+n_+}^+<+\infty.$$
	Applying Lemma \ref{lemma5.7}, for both cases, we have that
	$S_{n-1}(\lambda_{i}^+)=i-1$. Then by  Lemma \ref{a}  and \eqref{definitionvarphii}, we know that $\phi_+(\lambda_i^+, x_1),\cdots, \phi_{+}(\lambda_i^+, x_n)$ change signs for $i-1$ times. 
By  Lemma \ref{lemma5.9}, $\phi_+(\lambda_i^+, x)$ has $i-1$ zeros in the interval $(x_1, x_n)$. Note that $\phi_+(\lambda_i^+,x)$ has no zero in the interval $(-\infty, x_1]$ and $[x_n, +\infty)$.
	 Therefore, the eigenfunction of  the eigenvalue $\lambda_i^+$ has exactly $i-1$ zeros.
	
	  By a similar argument,  the eigenfunction of the eigenvalue $\lambda_i^-$ also has exactly $i-1$ zeros.
\end{proof}
\begin{remark} \label{remark5.11}
Combining the fact that $S_{n-1}(\lambda_{1}^+)=0$ with \eqref{q0} and \eqref{qihephii},
 for any $x\in \mathbb{R}$, we have that
 $$\phi_{+}(\lambda_{1}^-,x), \phi_{+}(\lambda_{1}^+,x)>0.$$
\end{remark}
\begin{remark} \label{remark 6.11}
	Let us mention that if $n_v+n_-=0$, then  $n_v+n_+=n$. $H$ has $n$ positive and simple eigenvalues:
	\begin{equation}
	0<\lambda_{1}^+<\cdots<\lambda_{n}^+.  \nonumber
	\end{equation}
	The eigenfunction of the eigenvalues  $\lambda_{i}^{+}$ has exactly $i-1$ zeros.
\end{remark}

	By Lemma \ref{lemma3.11} and Theorem \ref{theorem5.9}, we have the following  corollary.
\begin{corollary}\label{corollary5.10}
{Assume that  $n+n_v\ge3$. Then for any $x \in \mathbb{R}$,  there exist $\lambda_i, \lambda_j\in \sigma(H)$, so that the corresponding eigenfunctions $\varphi_i$  and $\varphi_j$ satisfy
\begin{equation} \nonumber
\varphi_i(x)\ne 0,  \varphi_j(x)\ne 0.  
\end{equation}}
\end{corollary}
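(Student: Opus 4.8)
The plan is to prove the slightly stronger statement that, for any $x\in\mathbb{R}$, at least two among the eigenfunctions $\varphi_1,\dots,\varphi_N$ are nonzero at $x$, where $N=n+n_v\ge 3$ is the number of (real, simple) eigenvalues supplied by Theorem \ref{theorem5.9}; the desired pair $\lambda_i,\lambda_j$ is then given by the corresponding two distinct eigenvalues. I would organize the argument around the sign pattern of the ordered eigenvalues $\lambda_1<\cdots<\lambda_N$, using Remark \ref{remark5.11} to pin down eigenfunctions that never vanish and Lemma \ref{lemma3.11} to rule out two adjacent vanishings.

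First I would treat the \emph{indefinite} case, i.e.\ $\lambda_{j_0}<0<\lambda_{j_0+1}$ for some $j_0$. Here $\lambda_{j_0}=\lambda_1^-$ and $\lambda_{j_0+1}=\lambda_1^+$ are the eigenvalues nearest the origin, so Remark \ref{remark5.11} gives $\phi_{+}(\lambda_{j_0},\cdot)>0$ and $\phi_{+}(\lambda_{j_0+1},\cdot)>0$ on all of $\mathbb{R}$. By \eqref{eq2.6} this makes $\varphi_{j_0}(x)\ne 0$ and $\varphi_{j_0+1}(x)\ne 0$ for \emph{every} $x$, so the two required indices exist uniformly in $x$.

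It then remains to handle the \emph{definite} case; by the obvious symmetry I would do $\lambda_1>0$ and treat $\lambda_N<0$ in the same way with $N$ in place of $1$. Now $\lambda_1=\lambda_1^+$, so Remark \ref{remark5.11} yields $\varphi_1(x)\ne 0$ for all $x$, giving one good index. Fixing $x$, I would argue dichotomously: if $\varphi_2(x)\ne 0$ then the pair $(\varphi_1,\varphi_2)$ already works; otherwise $\varphi_2(x)=0$, and since $N\ge 3$ the index $2$ lies in $\{2,\dots,N-1\}$, so Lemma \ref{lemma3.11} forces $\varphi_1(x)\varphi_3(x)<0$, whence $\varphi_3(x)\ne 0$ and the pair $(\varphi_1,\varphi_3)$ works. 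In either branch two distinct eigenfunctions are nonzero at $x$, completing the proof.

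The main obstacle is precisely the definite case: Remark \ref{remark5.11} guarantees only a \emph{single} everywhere-nonzero eigenfunction ($\varphi_1$ when $\lambda_1>0$, or $\varphi_N$ when $\lambda_N<0$), and one must still produce a second. The resolution is to bootstrap with the oscillation/interlacing estimate of Lemma \ref{lemma3.11}, which prevents $\varphi_2$ and $\varphi_3$ from vanishing simultaneously at an interior index. The one point needing care is checking that index $2$ (respectively $N-1$) is genuinely interior, and this is exactly where the hypothesis $n+n_v\ge 3$ enters.
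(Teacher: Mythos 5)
Your proof is correct and follows essentially the same route as the paper, which derives the corollary precisely from Theorem \ref{theorem5.9} (equivalently Remark \ref{remark5.11}, giving the nowhere-vanishing eigenfunctions for $\lambda_1^{\pm}$) together with Lemma \ref{lemma3.11} to exclude a second vanishing in the definite case. The paper states this as a one-line deduction; your case split (indefinite vs.\ definite) and the dichotomy on $\varphi_2(x)$ is exactly the intended fleshed-out argument, including the observation that $n+n_v\ge 3$ is what makes index $2$ (resp.\ $N-1$) interior so that Lemma \ref{lemma3.11} applies.
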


\appendix

\noindent \textbf{Acknowledgement.}  The authors are indebted to Guangsheng Wei for bringing this problem to their attention and for stimulating discussions.
 The authors would like to thank Dmitry Pelinovsky and Yiteng Hu for their valuable comments and additional references.
\vskip0.5in

\end{document}